\theoremstyle{plain}
\newtheorem{theorem}{Theorem}[section]
\newtheorem{definition}[theorem]{Definition}
\newtheorem{lemma}[theorem]{Lemma}
\newtheorem{proposition}[theorem]{Proposition}
\newtheorem{remark}[theorem]{Remark}
\newtheorem{remark-question}[section]{Remark-Question}
\newcommand\C{{\mathbb C}}
\newcommand\Q{{\mathbb Q}}
\newcommand\R{{\mathbb R}}
\newcommand\fra{{\mathfrak a}} 
\newcommand\frg{{\mathfrak g}}
\newcommand\frh{{\mathfrak h}}
\newcommand\frk{{\mathfrak K}}
\newcommand\frs{{\mathfrak s}}
\newcommand\frt{{\mathfrak t}}
\newcommand\gc{\frg_{_\mathbb{C}}}
\newcommand\tc{\frt_{_\mathbb{C}}}
\newcommand\Real{{\mathfrak R}{\frak e}\,} 
\newcommand\Imag{{\mathfrak I}{\frak m}\,}
\newcommand\nilm{\Gamma\backslash G}
\newcommand\db{{\bar{\partial}}}
\date{\today}
\begin{document}

\title[]{
\small{Fr\"olicher spectral sequence of compact complex manifolds with special Hermitian metrics}\\
}

\keywords{Complex manifold; Fr\"olicher spectral sequence; balanced metric; pluriclosed metric.}
\subjclass[2010]{53C55; 32J27, 53C15}

\author{Adela Latorre}
\address[A. Latorre]{Departamento de Matem\'atica Aplicada,
Universidad Polit\'ecnica de Madrid,
Avda. Juan de Herrera 4,
28040 Madrid, Spain}
\email{adela.latorre@upm.es}

\author{Luis Ugarte}
\address[L. Ugarte]{Departamento de Matem\'aticas\,-\,I.U.M.A.\\
Universidad de Zaragoza\\
Campus Plaza San Francisco\\
50009 Zaragoza, Spain}
\email{ugarte@unizar.es}

\author{Raquel Villacampa}
\address[R. Villacampa]{Departamento de Matem\'aticas\,-\,I.U.M.A.\\
Universidad de Zaragoza\\
Campus Plaza San Francisco\\
50009 Zaragoza, Spain}
\email{raquelvg@unizar.es}


\maketitle

\begin{abstract}
In this paper we focus on the interplay between the behaviour of the Fr\"olicher spectral sequence and the existence of special Hermitian metrics on the manifold, such as balanced, SKT or generalized Gauduchon. The study of balanced metrics on nilmanifolds endowed with strongly non-nilpotent complex structures allows us to provide 
infinite families of compact balanced manifolds with Fr\"olicher spectral sequence not degenerating at the second page. 
Moreover, this result is extended to non-degeneration at any arbitrary page. 
Similar results are obtained for the Fr\"olicher spectral sequence of compact generalized Gauduchon manifolds. 
We also find a compact SKT manifold whose  
Fr\"olicher spectral sequence does not degenerate at the second page, 
thus providing a counterexample to a conjecture by Popovici.
\end{abstract}

\section{Introduction}\label{introduction}

\noindent
Let $X$ be a complex manifold. 
Fr\"olicher introduced in \cite{Fro} a spectral sequence $\{E^{*,*}_{r}(X)\}_{r\geq 1}$ associated to the double complex $(\Omega^{*,*}(X),\partial,\db)$, where $\partial+\db=d$. We will refer to it as the Fr\"olicher spectral sequence (FSS for short) of $X$. This sequence is also 
known as the Hodge-de Rham spectral sequence, as its first page is given by the Dolbeault cohomology $H^{*,*}_{\db}(X)$ of $X$ and it converges to the de Rham cohomology $H^*_{dR}(X,\C)$. Hence, 
the spaces $E^{*,*}_{r}(X)$ provide (possibly new) complex invariants of the manifold. 

An important question is to understand the interplay between the behaviour of the Fr\"olicher spectral sequence and the existence of special Hermitian metrics on the manifold. It is well-known that the FSS of any compact K\"ahler manifold degenerates at the first page. In the context of non-K\"ahler Hermitian geometry,
well-known relevant classes of metrics arise, as for instance  \emph{balanced} or \emph{generalized Gauduchon} (in particular, \emph{pluriclosed}) metrics.
The aim of this paper is to construct compact complex manifolds endowed with these types of metrics and having Fr\"olicher spectral sequence not degenerating at different pages.

Recall that a Hermitian metric $F$ on a compact complex manifold $X$ with $\dim_{\C}X=n$ is called balanced if the form $F^{n-1}$ is closed \cite{Mi}.
The Iwasawa manifold is an example of a compact balanced manifold with FSS degenerating at the second page, with $E_1\not= E_2$. More generally, any compact quotient $X$ of a nilpotent \emph{complex} Lie group $G$ by a lattice is balanced (indeed, any left-invariant Hermitian metric on $G$ is balanced) and satisfies $E_2(X) = E_\infty(X)$ \cite{Sakane}. 

The Iwasawa manifold, as well as the compact quotients of nilpotent complex Lie groups, are special concrete examples of \emph{nilmanifolds}. 
There are some balanced nilmanifolds in the literature with FSS satisfying $E_2\neq E_{\infty}$. For instance, Cordero, Fern\'andez and Gray obtained in \cite{CFG-illinois} a $6$-dimensional complex nilmanifold with FSS not degenerating at the third page
(for the existence of balanced metrics on it, see Section~\ref{balanced-section} below).
Furthermore, for every $k\geq 2$, Bigalke and Rollenske constructed in \cite{BR} a $(4k-2)$-dimensional complex nilmanifold with $E_k\neq E_{\infty}$, which turns out to be balanced by a recent result by Sferruzza and Tardini \cite{ST21}.

Each of the nilmanifolds above occurs as a concrete example in a certain complex dimension.
In fact, as far as we know, there are no infinite families of manifolds in the literature, in the sense of having infinitely many different complex homotopy types, all living in the same complex dimension with $E_k\neq E_{\infty}$ for some $k$. 
The main goal of this paper is to construct such families. 
The starting point of our construction will be the so-called \emph{strongly non-nilpotent} complex structures on nilmanifolds, recently studied in \cite{LUV1} and classified in \cite{LUV3} in four complex dimensions.

%

Apart from the important role played by nilmanifolds in non-K\"ahler Hermitian geometry, there are some other reasons motivating the study of complex nilmanifolds in relation to the FSS. 
For instance, Kasuya proves in~\cite{Kasuya} that, in the larger class of solvmanifolds, if one considers those constructed from a semi-direct product of $\C^n$ by a nilpotent Lie group, then the page at which their FSS degenerates cannot be greater than that of the nilmanifold.
So, in this sense, nilmanifolds constitute a preferred class to search for compact complex manifolds with non-degenerate FSS. In addition, complex nilmanifolds, and in particular strongly non-nilpotent complex structures in four dimensions, have a remarkable role in relation to the problem of finding manifolds realizing certain generators of the universal ring
of cohomological invariants recently studied by Stelzig in~\cite{Stelzig-2021}.

We recall that the FSS of any complex nilmanifold $X$ with $\dim_{\C} X = 3$ is studied in \cite{COUV}; in particular, it is proved that the existence of a balanced metric on $X$ implies that $E_2(X) = E_{\infty}(X)$. Note that this is indeed a restriction, as there exist complex 3-dimensional nilmanifolds $X$ with 
$E_2(X) \not= E_3(X)$ (see \cite[Theorem 4.1]{COUV}). Therefore, complex dimension four is the lowest possible dimension for a balanced nilmanifold to have FSS not degenerating at the second page. 
In Theorem~\ref{cor-infinite-bal-FSS} we prove that there are infinitely many  
nilmanifolds satisfying these properties and with different complex (hence, real or rational) homotopy types. Moreover, this result is extended in Theorem~\ref{th-inf-balanced} to non-degeneration at any arbitrary page.


In this paper we also deal with compact generalized Gauduchon manifolds, which were introduced and studied by Fu, Wang and Wu in \cite{FWW}. 
We recall that a Hermitian metric $F$ on a compact complex manifold $X$ with $\dim_{\C}X=n$ is called {\em $k$-th Gauduchon}, for some $1 \leq k \leq n -1$, if it satisfies the condition
$\partial \db F^k  \wedge F^{n -k-1} =0$. 
Observe that the value $k = n-1$ corresponds to the \emph{standard} (also known as \emph{Gauduchon}) metrics \cite{Gau}. Note also that any pluriclosed (or SKT) metric is in particular $1$-st Gauduchon, as $\partial \db F=0$.

We prove in Theorem~\ref{th-inf-k-G} that there are infinitely many generalized Gauduchon nilmanifolds with different complex homotopy type whose Fr\"olicher spectral sequence
can be arbitrarily non-degenerate.
Regarding pluriclosed metrics, Popovici proved in \cite{Pop1,Pop2} that the existence of a Hermitian metric on $X$ 
with `small torsion' implies $E_2(X)=E_{\infty}(X)$, 
and furthermore, he conjectured that any compact complex manifold 
$X$ admitting an SKT metric has
FSS degenerating at the second page \cite[Conjecture 1.3]{Pop1}.
 
In Proposition~\ref{counterex} we give a counterexample to this conjecture, based on the complex geometry of compact Lie groups. More concretely, 
we consider the compact semisimple Lie group SO(9) equipped with a left-invariant complex structure $J$ found by Pittie in \cite{Pittie-indian,Pittie-bull}, which is compatible with a bi-invariant metric $g$. 
Recall that any such compact Lie group is Bismut flat and its fundamental form $F$ is $dd^c$-harmonic by a result of Alexandrov and Ivanov \cite{AI}.

The paper is structured as follows. 
In Section~\ref{FSS-section} we study the Fr\"olicher spectral sequence of $8$-dimensional nilmanifolds endowed with strongly non-nilpotent complex structures. 
A general study of the existence of balanced metrics on such complex nilmanifolds is given in Section~\ref{balanced-section}, from which we arrive at the results in Theorems~\ref{cor-infinite-bal-FSS} and~\ref{th-inf-balanced} mentioned above. 
Finally, Section~\ref{counterexample-section} is devoted to the FSS of compact pluriclosed and generalized Gauduchon manifolds.

\section{Complex nilmanifolds with $1$-dimensional center and non-degenerate Fr\"olicher spectral sequence}\label{FSS-section}

\noindent In this section we study the FSS on $8$-dimensional nilmanifolds with one-dimensional center endowed with invariant complex structures. Infinite families of complex nilmanifolds with $E_2\neq E_{\infty}$ are obtained in complex dimension 4.

Let $X$ be a compact complex manifold with $\dim_{\C}X=n$. We recall that the Fr\"olicher spectral sequence of $X$ is 
the spectral sequence associated to 
the double complex $(\Omega^{*,*}(X), \partial,\bar\partial)$, 
where $\partial +\bar\partial = d$ is the usual decomposition of
the exterior differential $d$ on $X$.
This spectral sequence was first introduced in \cite{Fro}, in terms of a certain filtration, and it can be described as a collection of complexes 
$$
\dots\stackrel{d_r}{\longrightarrow} E_r^{p-r,\,q+r-1}(X)\stackrel{d_r}{\longrightarrow}E_r^{p,\,q}(X)\stackrel{d_r}{\longrightarrow}E_r^{p+r,\,q-r+1}(X)\stackrel{d_r}{\longrightarrow}\dots
$$ 
that are canonically associated with the complex structure of $X$, for every $r\geq 1$. 
The {\it $1$-st page} consists of the Dolbeault cohomology groups of $X$, i.e. $E_1^{p,\,q}(X)=H^{p,\,q}_{\bar\partial}(X)$, while the differentials $d_1$ are induced by $\partial$ as $d_1([\alpha]) = [\partial\alpha]$, for every Dolbeault class $[\alpha]\in H^{p,\,q}_{\bar\partial}(X)$. For an arbitrary~$r$, the differentials $d_r$ on the {\it $r$-th page} are of type $(r,\,-r+1)$ but they are still induced by $\partial$ acting on a certain $(p+r-1,q-r+1)$-form associated to any representative of every class in $E_r^{p,\,q}(X)$. (See the description below.) It turns out that $d_r\circ d_r=0$, and the {\it $(r+1)$-th page} is induced from the previous {\it $r$-th page} as the kernel of $d_r$ over the image of the incoming differential $d_r$.

There exists a positive integer $r$ from which all the differentials vanish identically, namely, $d_s=0$ for all $s\geq r$.
This is equivalent to having $E_r^{p,\,q}(X) = E_{r+k}^{p,\,q}(X)$ for every $k\geq 1$ and any $0\leq p,q\leq n$. This space $E_r^{p,\,q}(X)$ is denoted by $E_\infty^{p,\,q}(X)$ and the FSS is said to be {\it degenerated at the $r$-th page}, then writing $E_r(X) = E_\infty(X)$. 

The Fr\"olicher spectral sequence gives a link between the complex structure of $X$ and its differential structure. Indeed, it converges to the de Rham cohomology of $X$ in the sense that there are isomorphisms 
$H^k_{dR}(X,\,\C)\simeq\bigoplus\limits_{p+q=k}E_\infty^{p,\,q}(X)$, for every $k\in\{0,\dots , 2n\}$. 
It is worthy to recall that a Hodge theory is introduced in \cite{PSU} 
through the construction of elliptic pseudo-differential operators, 
associated with any given
Hermitian metric on $X$, whose kernels are isomorphic to the spaces $E_r^{p,q}(X)$
in every bidegree $(p, q)$. This extended to any arbitrary positive integer $r$ a previous construction in \cite{Pop1} for $r = 2$. We also remind that Serre duality for $E_r^{p,q}$ is proved by Stelzig in \cite{Stelzig}
and by Milivojevi\'c in \cite{Mil19}. This duality is also obtained as a consequence of Hodge theory (see \cite{PSU} for more details).

\vskip.1cm

The following general description of the terms in the Fr\"olicher spectral sequence was given in~\cite{CFGU97} and it will be useful for our purposes. For every $r\geq 1$ and any $0\leq p,q\leq n$, the space $E^{p,q}_r(X)$ is isomorphic to the quotient $\C$-vector space
\begin{equation}\label{E=X/Y}
E^{p,q}_r(X)=\frac{{\mathcal X}^{p,q}_r(X)}{{\mathcal Y}^{p,q}_r(X)},
\end{equation}
where
$${\mathcal X}^{p,q}_1(X)= \{\alpha_{p,q} \in \Omega^{p,q}(X) \, \mid\,  \db\alpha_{p,q}=0 \},
\quad {\mathcal Y}^{p,q}_1(X)=\db\big(\Omega^{p,q-1}(X)\big),$$ 
and for every $r\geq 2$
\begin{equation}\label{Xpq}
\begin{array}{rl}
{\mathcal X}^{p,q}_r(X)= \{\alpha_{p,q} \in \Omega^{p,q}(X)  \mid\,  \!\!&\!\! \db\alpha_{p,q}=0, \mbox{ and there exist $r-1$ forms } \alpha_{p+1,q-1},\,\ldots\\[4pt]
& \ldots,\,\alpha_{p+r-2,q-r+2},\, \alpha_{p+r-1,q-r+1} \mbox{ satisfying }\\[5pt] 
0=\partial\alpha_{p,q}\!\!&\!\!+\,\db\alpha_{p+1,q-1}
=\cdots
=\partial\alpha_{p+r-2,q-r+2}+\db\alpha_{p+r-1,q-r+1} \},
\end{array}
\end{equation} 
and
\begin{equation}\label{Ypq}
\begin{array}{rl}
{\mathcal Y}^{p,q}_r(X) = \{\db\beta_{p,q-1}+\partial\beta_{p-1,q} \in \Omega^{p,q}(X)  \mid  \!\!&\!\! \mbox{there exist $r-2$ forms } \beta_{p-2,q+1},\,\ldots\\[4pt]
\!&\! \ldots,\,\beta_{p-r+2,q+r-3},\, \beta_{p-r+1,q+r-2} \mbox{ satisfying }\\[5pt] 
0\!=\db\beta_{p-1,q}\!+\!\partial\beta_{p-2,q+1}
\!=\!\cdots\!=
\db\!\!&\!\!
\beta_{p-r+2,q+r-3}\!+\!\partial\beta_{p-r+1,q+r-2}
\!= \db\beta_{p-r+1,q+r-2} \}.
\end{array}
\end{equation} 

Furthermore, the differentials $d_r\colon E_r^{p,\,q}(X)\longrightarrow E_r^{p+r,\,q-r+1}(X)$ are explicitly given by 
\begin{equation}\label{dr}
d_r\big( [\alpha_{p,q}]\big)= [\partial\alpha_{p+r-1,q-r+1}],
\end{equation}
for any $[\alpha_{p,q}]\in E_r^{p,\,q}(X)$. 

\vskip.2cm

Let $G$ be a simply
connected real Lie group endowed with a left-invariant complex structure $J$, 
and suppose that $G$ admits a discrete subgroup $\Gamma$ so that the quotient space $\nilm$ is compact. Let us denote by $X$ the latter manifold endowed with the (naturally induced) complex structure~$J$. 

Consider $\frg$, the Lie algebra of $G$, endowed with the (linear integrable) complex structure $J$. Then, 
we can define the corresponding sequence $E_r(\frg,J)$ associated to the pair $(\frg,J)$. The description~\eqref{E=X/Y} together with \eqref{Xpq} and \eqref{Ypq}, and so the homomorphisms \eqref{dr}, apply to this sequence. 

\begin{proposition}\label{NLA-a-nilvariedad}
Let $X=(\nilm,J)$ be a compact quotient of a simply connected Lie
group $G$ by a lattice $\Gamma$, endowed with a complex structure naturally induced by a left-invariant complex structure $J$ on $G$. Let $\frg$ be the Lie algebra of $G$. Fix an integer $r\geq 1$, and suppose that the homomorphism 
$d_r\colon E_r^{p,q}(\frg,J) \longrightarrow E_r^{p+r,q-r+1}(\frg,J)$ is non-zero for some $p,q$. 
Then, the Fr\"olicher spectral sequence of $X$ does not degenerate at the $r$-th page.
\end{proposition}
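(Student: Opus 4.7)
The plan is to compare the Fr\"olicher spectral sequence of $X$ with the analogous one at the Lie algebra level via the natural inclusion of left-invariant forms, and to show that this inclusion is injective at every page. Once that is established, the non-vanishing of $d_r$ on $E_r(\frg,J)$ will automatically transfer to a non-vanishing $d_r$ on $E_r(X)$, preventing degeneration at $E_r$.

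First I would observe that the canonical inclusion $\iota : \Omega^{*,*}(\frg,J) \hookrightarrow \Omega^{*,*}(X)$ commutes with both $\partial$ and $\db$, because $J$ is left-invariant. Using the descriptions \eqref{E=X/Y}--\eqref{Ypq}, it sends $\mathcal{X}^{p,q}_r(\frg,J)$ into $\mathcal{X}^{p,q}_r(X)$ and $\mathcal{Y}^{p,q}_r(\frg,J)$ into $\mathcal{Y}^{p,q}_r(X)$, so it induces homomorphisms $\iota_r : E_r^{p,q}(\frg,J) \to E_r^{p,q}(X)$ intertwining the differentials $d_r$ from \eqref{dr}. To upgrade $\iota_r$ to an injection, I would invoke the symmetrization operator $\mu : \Omega^{*,*}(X) \to \Omega^{*,*}(\frg,J)$ obtained by integrating against a bi-invariant volume form on $G$; such a form exists because the existence of a lattice $\Gamma$ with $\Gamma\backslash G$ compact forces $G$ to be unimodular. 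Since $J$ is left-invariant, $\mu$ preserves the bidegree and commutes with $\partial$ and $\db$, and evidently $\mu\circ\iota=\mathrm{id}$ on invariant forms. Applying $\mu$ to each element of a chain of forms $\alpha_{p+j,q-j}$ or $\beta_{p-j,q+j}$ appearing in \eqref{Xpq}--\eqref{Ypq} converts a generic witness into a left-invariant one, so $\mu$ descends to a retraction $\mu_r : E_r^{p,q}(X) \to E_r^{p,q}(\frg,J)$ satisfying $\mu_r \circ \iota_r = \mathrm{id}$. This yields the required injectivity.

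To conclude, if $d_r[\alpha]\neq 0$ in $E_r^{p+r,q-r+1}(\frg,J)$ for some $[\alpha]\in E_r^{p,q}(\frg,J)$, then by compatibility one has $d_r(\iota_r[\alpha]) = \iota_r(d_r[\alpha])$, which is non-zero by the injectivity just established. Hence $d_r$ does not vanish on the $r$-th page of the Fr\"olicher spectral sequence of $X$, and the sequence does not degenerate at $E_r$. The technically delicate point is the simultaneous averaging along a chain: one must check that applying $\mu$ to \emph{every} form of the chain preserves the alternating relations $\partial\alpha_{p+j-1,q-j+1}+\db\alpha_{p+j,q-j}=0$ (and their analogues for the $\beta$'s), which reduces to the commutation of $\mu$ with $\partial$ and $\db$ and is where the left-invariance of $J$ plays its essential role.
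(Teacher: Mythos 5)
Your proof is correct and uses the same essential mechanism as the paper: the symmetrization operator applied term-by-term to the chains in \eqref{Xpq}--\eqref{Ypq}, which commutes with $\partial$ and $\db$ because $J$ is left-invariant. The paper phrases this as a proof by contradiction (a would-be trivializing chain on $X$ symmetrizes to one on $\frg$), and then records afterwards exactly the injectivity $E_r^{p,q}(\frg,J)\hookrightarrow E_r^{p,q}(X)$ commuting with $d_r$ that you establish up front via the retraction $\mu_r\circ\iota_r=\mathrm{id}$; the two formulations are equivalent.
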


\begin{proof}
Let $[\alpha_{p,q}]$ be an element in $E_r^{p,q}(\frg,J)$ such that $d_r \big([\alpha_{p,q}]\big) \not=0$ in $E_r^{p+r,q-r+1}(\frg,J)$. Since $\alpha_{p,q}\in
{\mathcal X}^{p,q}_r(\frg,J)$ and there is a natural inclusion 
$\iota\colon{\mathcal X}^{p,q}_r(\frg,J)\hookrightarrow {\mathcal X}^{p,q}_r(X)$, the form $\alpha_{p,q}$ defines an element $[\alpha_{p,q}]$ in $E_r^{p,q}(X)$. 
Notice that we can choose the $r-1$ forms $\alpha_{p+1,q-1},\,\ldots,\,\alpha_{p+r-2,q-r+2},\, \alpha_{p+r-1,q-r+1}$ in \eqref{Xpq} to be left-invariant. 

Suppose that the FSS of $X$ degenerates at $r$-th page. Then, $d_r\big([\alpha_{p,q}]\big) =0$ in $E_r^{p+r,q-r+1}(X)$. 
By \eqref{Xpq} and \eqref{dr}, together with \eqref{E=X/Y}, this means that $\partial\alpha_{p+r-1,q-r+1}\in {\mathcal Y}^{p+r,q-r+1}_r(X)$.
From the description \eqref{Ypq}, there exist 
$r$ forms 
$$
\beta_{p+r,q-r},\ \beta_{p+r-1,q-r+1},\ \beta_{p+r-2,q-r+2},\ldots,\beta_{p+2,q-2},\ \beta_{p+1,q-1}
$$ 
on the complex manifold $X$ satisfying 
$$
\partial\alpha_{p+r-1,q-r+1}=\db\beta_{p+r,q-r}+\partial\beta_{p+r-1,q-r+1},$$  
and
$$
\db\beta_{p+r-1,q-r+1}+\partial\beta_{p+r-2,q-r+2}=0, \
\ldots, \ \ 
\db\beta_{p+2,q-2}+\partial\beta_{p+1,q-1}=0, \ \ 
\db\beta_{p+1,q-1}=0. $$
As the Lie group $G$ has a lattice, $G$ is unimodular. In particular, $G$
admits a bi-invariant volume form, so we can apply the well-known symmetrization process (see for instance \cite{COUV}  and the references therein for details). 
Given any form $\beta$ on $X$, we denote by $\tilde{\beta}$ the left-invariant form on $G$ given by the symmetrization of $\beta$. Recall that $J$ being left-invariant, the bidegree of the forms is preserved, and one has $\widetilde{\partial\beta}=\partial\tilde{\beta}$ and $\widetilde{\db\beta}=\db\tilde{\beta}$. 

Note that $\partial\alpha_{p+r-1,q-r+1}$ coincides with its symmetrization because it is left-invariant. 
Therefore, from the equalities above, we get $r$ left-invariant forms 
$$
\tilde{\beta}_{p+r,q-r},\ \tilde{\beta}_{p+r-1,q-r+1},\ \tilde{\beta}_{p+r-2,q-r+2},\ldots,\tilde{\beta}_{p+2,q-2},\ \tilde{\beta}_{p+1,q-1}
$$ 
satisfying 
$$
\partial\alpha_{p+r-1,q-r+1}=\db\tilde{\beta}_{p+r,q-r}+\partial\tilde{\beta}_{p+r-1,q-r+1},$$  
and
$$
\db\tilde{\beta}_{p+r-1,q-r+1}+\partial\tilde{\beta}_{p+r-2,q-r+2}=0, \
\ldots, \ \
\db\tilde{\beta}_{p+2,q-2}+\partial\tilde{\beta}_{p+1,q-1}=0, \ \ 
\db\tilde{\beta}_{p+1,q-1}=0. $$
But this implies  $\partial\alpha_{p+r-1,q-r+1}\in {\mathcal Y}^{p+r,q-r+1}_r(\frg,J)$, which is a contradiction to the hypothesis that $d_r \big([\alpha_{p,q}]\big)$ is non-zero in $E_r^{p+r,q-r+1}(\frg,J)$.

In conclusion, $d_r\big([\alpha_{p,q}]\big) \not=0$ in $E_r^{p+r,q-r+1}(X)$, and the FSS of $X$ does not degenerate at the $r$-th page.
\end{proof}

From now on in this section 
we will consider that $G$ is a nilpotent Lie group, and thus
 $X=(\nilm,J)$ is a complex nilmanifold. It should be noticed that 
if the natural map 
$\iota\colon\Lambda^{*,*}(\frg,J)\hookrightarrow \Omega^{*,*}(X)$ induces an isomorphism in Dolbeault cohomology, then 
one has also an isomorphism $E_r^{p,q}(X)\cong E_r^{p,q}(\frg,J)$ for every $k\geq 1$ and any $0\leq p,q\leq n$ (see also \cite[Lemma 7.5]{Stelzig}). 
This is indeed the case in complex dimension three \cite{R2, FRR}, or in arbritary dimension for any \emph{nilpotent} complex structure \cite{RTW}. 

However, the complex structures $J$ on the nilmanifolds $X$ that we will study in this section are (strongly) non-nilpotent, so Proposition~\ref{NLA-a-nilvariedad} will be applied to derive the non-degeneration of the FSS of $X$ by means of the non-degeneration of the sequence $E_r(\frg,J)$. Observe that the proof of  Proposition~\ref{NLA-a-nilvariedad} implies that the natural inclusion 
$\iota$ induces an injection $E_r^{p,q}(\frg,J) \hookrightarrow  E_r^{p,q}(X)$ that commutes with the differentials $d_r$. 

\vskip.2cm

Let $\frg$ be a nilpotent Lie algebra (NLA for short) endowed with a complex structure~$J$. Since~$\frg$ is nilpotent, its center is always non-trivial. If we now assume that the center has dimension~1, then the only $J$-invariant subspace in it is the trivial one.
Complex structures having the latter property are known as \emph{strongly non-nilpotent} (SnN for short), and they are studied in \cite{LUV1}. 
We recall that, up to real dimension 8, the conditions \emph{``$J$ being SnN''} and \emph{``$\frg$ having $1$-dimensional center''} are equivalent. 
Furthermore, it is proved in \cite[Proposition 3.1]{LUV3} that 
when $\frg$ has dimension 8 and $J$ is of the previous type,
the dimension of the space $E^{0,1}_{1}(\frg,J)$ 
is either $2$ or $3$. 
This provides a partition of the space of SnN complex structures $J$ into two families.

\begin{definition}\label{familias-I-II}
{\rm
Let $\frg$ be an $8$-dimensional NLA endowed with an SnN complex structure $J$.
We say that $J$ belongs to \emph{Family I} (resp. \emph{Family II}) if $E^{0,1}_{1}(\frg,J)$ has maximal dimension (resp. minimal dimension).
}
\end{definition}

The following classification of SnN complex structures is available in \cite{LUV3}:

\begin{proposition}\label{main-theorem} 
\cite[Theorem 3.3]{LUV3}
\noindent Let $J$ be a complex structure on an $8$-dimensional NLA $\frg$ with one dimensional center.
Then, there exists a basis of $(1,0)$-forms $\{\omega^k\}_{k=1}^4$ in terms of which the complex structure equations of $(\frg,J)$ 
are one (and only one) of the following:
\begin{itemize}
\item[(i)] if $J$ belongs to {\textbf{Family I}}, then
\begin{equation}\label{ecus-I}
\left\{
\begin{split}
d\omega^1 &= 0,\\[-4pt]
d\omega^2 &= \varepsilon\,\omega^{1\bar 1},\\[-4pt]
d\omega^3 &= \omega^{14}+\omega^{1\bar 4}+a\,\omega^{2\bar 1}+ i\,\delta\,\varepsilon\,b\,\omega^{1\bar 2},\\[-4pt]
d\omega^4 &= i\,\nu\,\omega^{1\bar 1} +b\,\omega^{2\bar 2}+ i\,\delta\,(\omega^{1\bar 3}-\omega^{3\bar 1}),
\end{split}
\right.
\end{equation}
where $\delta=\pm 1$, $(a,b)\in \mathbb R^2-\{(0,0)\}$ with $a\geq 0$,
and 
$(\varepsilon, \nu, a, b)$ being one of the following:
$(0,0,0,1)$, $(0,0, 1, 0)$, $(0,0, 1, 1)$, $(0,1, 0, \pm 1)$, $(0,1, 1,b)$, 
$(1,0, 0,1)$, $(1,0, 1,|b|)$ or $(1,1, a, b)$.


\item[(ii)] if $J$ belongs to {\textbf{Family II}}, then
\begin{equation}\label{ecus-II}
\left\{
\begin{split}
d\omega^1&=0,\\[-4pt]
d\omega^2&=\omega^{14}+\omega^{1\bar 4},\\[-4pt]
d\omega^3&=a\,\omega^{1\bar 1}
                 +\varepsilon\,(\omega^{12}+\omega^{1\bar 2}-\omega^{2\bar 1})
                 +i\,\mu\,(\omega^{24}+\omega^{2\bar 4}),\\[-4pt]
d\omega^4&=i\,\nu\,\omega^{1\bar 1}-\mu\,\omega^{2\bar 2}+i\,b\,(\omega^{1\bar 2}-\omega^{2\bar 1})+i\,(\omega^{1\bar 3}-\omega^{3\bar 1}),
\end{split}
\right.
\end{equation}
where $a, b\in\mathbb R$, and the tuple $(\varepsilon, \mu, \nu, a, b)$ takes the following values: $(1, 1, 0, a, b)$, $(1, 0, 1, a, b)$, $(1, 0, 0, 0, b)$, 
$(1, 0, 0, 1, b)$, $(0, 1, 0, 0, 0)$ or $(0, 1, 0, 1, 0)$.
\end{itemize}
\end{proposition}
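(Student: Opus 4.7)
The proposition is quoted as \cite[Theorem 3.3]{LUV3}, so the sketch below indicates how one would reprove the classification from scratch rather than treating it as a black-box reference. The approach is to build a basis $\{\omega^k\}_{k=1}^4$ of $(1,0)$-forms adapted simultaneously to the ascending central series of $\frg$ and to the dichotomy $\dim_\C E^{0,1}_1(\frg,J)\in\{2,3\}$. Using the SnN hypothesis, one picks a real basis $\{e_1,\ldots,e_8\}$ of $\frg$ with $e_8$ spanning the center and $\langle e_7,e_8\rangle$ transverse to $J Z(\frg)$, thereby producing a flag $V_1\subset V_2\subset V_3\subset V_4=(\frg_\C)^{1,0}$ in the space of $(1,0)$-forms. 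Choosing $\omega^k$ compatible with this filtration makes the exterior derivative $d\omega^k$ lower triangular in the index $k$.

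Next I would impose the two structural constraints that hold on any complex nilpotent Lie algebra: the integrability condition that $d\omega^k$ has no $(0,2)$-component, and the Maurer-Cartan/Jacobi identity $d^2\omega^k=0$. Together these translate into a finite polynomial system on the structure constants. The case analysis is then driven by $\dim_\C E^{0,1}_1(\frg,J)$. In Family I, the extra $\db$-closed $(0,1)$-form forces several coefficients in $d\omega^2$ and $d\omega^3$ to vanish, yielding the triangular pattern in \eqref{ecus-I}; in Family II the scarcity of $\db$-closed $(0,1)$-forms allows the mixed terms $\omega^{24}+\omega^{2\bar 4}$ and $\omega^{1\bar 3}-\omega^{3\bar 1}$ to survive, producing \eqref{ecus-II}.

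After reaching these two normal forms, the remaining task is to act by a residual linear change of basis $\omega^k\mapsto\sum_\ell A^k_\ell\,\omega^\ell$ preserving the filtration and the complex structure, and to use this freedom to reduce the continuous moduli to the discrete or one-parameter values listed in the statement. The sign $\delta=\pm 1$, the normalization $a\geq 0$, and the enumeration of the tuples $(\varepsilon,\nu,a,b)$ for Family I (respectively $(\varepsilon,\mu,\nu,a,b)$ for Family II) emerge from this normalization, while the requirement $(a,b)\neq(0,0)$ records the non-degeneracy of the bracket on the top filtration level.

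The main obstacle is twofold. First, completeness requires a careful and exhaustive analysis of when each coefficient can be rescaled to $0$ or $\pm 1$, coupled with a meticulous use of $d^2=0$ to rule out inconsistent tuples. Second, rigidity, i.e.\ the claim that distinct tuples on the list correspond to non-isomorphic pairs $(\frg,J)$, requires extracting discrete invariants (such as Hodge numbers, or the ranks of the bracket maps along the ascending central series) that separate the cases. This combinatorial bookkeeping is where the bulk of the effort in \cite{LUV3} lies and is what I would expect to be the most delicate step to reproduce.
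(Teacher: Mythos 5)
The paper does not prove this proposition at all: it is imported verbatim as \cite[Theorem 3.3]{LUV3}, so there is no internal proof to compare your argument against. Your sketch is directionally consistent with how the classification is actually obtained in \cite{LUV1,LUV3}: one constructs a basis of $(1,0)$-forms adapted to the ascending central series (a ``doubly adapted'' basis in the terminology of \cite{LUV1}), imposes integrability and $d^2=0$, splits into the two families according to $\dim_{\C} E^{0,1}_1(\frg,J)\in\{2,3\}$ (a dichotomy which is itself \cite[Proposition 3.1]{LUV3} and requires proof), and then normalizes the surviving structure constants by the residual filtration-preserving changes of frame.

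As a proof, however, your text has a genuine gap: everything that actually establishes the statement is deferred. You do not derive the specific shape of \eqref{ecus-I} and \eqref{ecus-II} (for instance, why $d\omega^3$ in Family I contains exactly the terms $\omega^{14}+\omega^{1\bar 4}+a\,\omega^{2\bar 1}+i\,\delta\,\varepsilon\,b\,\omega^{1\bar 2}$ and nothing else), you do not carry out the elimination of parameters that produces the finite list of admissible tuples $(\varepsilon,\nu,a,b)$ and $(\varepsilon,\mu,\nu,a,b)$, and you do not address the ``one (and only one)'' clause, i.e.\ that distinct listed normal forms are pairwise non-equivalent. These are precisely the nontrivial claims of the proposition; a correct strategy statement with the execution omitted, which you yourself acknowledge is ``the bulk of the effort,'' does not constitute a proof. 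If the intention is merely to cite \cite{LUV3}, that is exactly what the paper does and no argument is needed; if the intention is to reprove the result, the exhaustive case analysis and the invariant-based separation of the normal forms must actually be supplied.
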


\vskip.2cm

Our first goal is to provide (the first known) examples of nilmanifolds endowed with SnN complex structures such that the differential $d_2\neq0$. We recall that up to real dimension 6 all such complex nilmanifolds have FSS degenerating at the second page 
(see \cite[Theorem 4.1]{COUV} for the NLAs $\frh_{19}^{-}$ and $\frh_{26}^{+}$).

\vskip.2cm

Next 
we will study the spaces $E^{0,2}_r(\frg,J)$ for every SnN $J$ on any 8-dimensional NLA $\frg$. 
One reason for focusing on the bidegree $(p,q)=(0,2)$ is motivated by the recent paper by Stelzig~\cite{Stelzig-2021}, where this bidegree plays an important role in complex dimension 4 in relation to the problem of finding manifolds realizing certain generators of the universal ring
of cohomological invariants in degree~4 (see \cite[Problem 11.2]{Stelzig-2021}). 

Moreover, note that one can take advantage of the explicit description \eqref{E=X/Y}-\eqref{dr} when particularized to any bidegree $(p,q)$ with $p=0$, since then ${\mathcal Y}^{0,q}_r=\{\db \beta_{0,q-1} \}$ for every $r\geq 1$. 
In particular, for $q=2$ the corresponding terms for the pair $(\frg,J)$ can be described as follows:
\begin{equation}\label{E-0-2}
E^{0,2}_r(\frg,J)=\frac{{\mathcal X}^{0,2}_r(\frg,\!J)}{\db\left(\Lambda^{0,1}(\frg,\!J)\right)},
\end{equation}
where 
$
{\mathcal X}^{0,2}_1(\frg,\!J)= \{\alpha_{0,2} \in \Lambda^{0,2}(\frg,\!J) \, \mid\, \db\alpha_{0,2}=0 \}
$, and
\begin{equation}\label{X-0-2}
\begin{array}{rl}
&{\mathcal X}^{0,2}_2(\frg,\!J)= \{\alpha_{0,2} \!\in\! {\mathcal X}^{0,2}_1(\frg,\!J)  \mid  \partial\alpha_{0,2}\!+\!\db\alpha_{1,1}\!=\!0 \mbox{ for some } \alpha_{1,1} \}, \\[5pt]
&{\mathcal X}^{0,2}_3(\frg,\!J)= \{\alpha_{0,2} \!\in\! {\mathcal X}^{0,2}_1(\frg,\!J)  \mid  \partial\alpha_{0,2}\!+\!\db\alpha_{1,1}
\!=\!\partial\alpha_{1,1}\!+\!\db\alpha_{2,0}\!=\!0 \mbox{ for some } \alpha_{1,1} \mbox{ and } \alpha_{2,0} \}, 
\end{array}
\end{equation}
whereas ${\mathcal X}^{0,2}_{r}(\frg,\!J)={\mathcal X}^{0,2}_{4}(\frg,\!J)$ for any $r\geq4$, with
$$
{\mathcal X}^{0,2}_{4}(\frg,\!J)= \{\alpha_{0,2} \in {\mathcal X}^{0,2}_1(\frg,\!J)  \mid  \partial\alpha_{0,2}\!+\!\db\alpha_{1,1}\!=\!\partial\alpha_{1,1}\!+\!\db\alpha_{2,0}\!=\!\partial\alpha_{2,0}\!=\!0 \mbox{ for some } \alpha_{1,1} \mbox{ and } \alpha_{2,0} \}.
$$
Moreover, $d_r\equiv 0$ for $r\geq 4$, and for $1\leq r\leq 3$ we have that $d_r\colon E^{0,2}_r(\frg,J) \longrightarrow E^{r,3-r}_r(\frg,J)$ is defined by 
$$
d_r\big([\alpha_{0,2}]\big)=[\partial\alpha_{r-1,3-r}],
$$
for any $[\alpha_{0,2}]\in E^{0,2}_r(\frg,J)$. Note that 
$E^{0,2}_{r+1}(\frg,J)=\ker d_r$ because the incoming $d_r$ is identically zero by bidegree reasons.

\vskip.2cm


In the following result we study in detail the terms $E_r^{0,2}$ in the FSS of an interesting subclass of complex structures in the Family I. 

\begin{proposition}\label{no-deg-fam1-NLA}
Let $J$ be an SnN complex structure in Family I defined by $\varepsilon=1$ and $ab\neq 0$. 
Let $\Theta(\delta,\nu,a,b)=\big((a-b)^2-2\delta\nu b\big)\big((a+b)^2-2\delta\nu b\big)$. Then,  
$$
E_2^{0,2}(\frg,J) \neq E_3^{0,2}(\frg,J) \ \Longleftrightarrow \ \Theta(\delta,\nu,a,b) \neq 0.
$$ 
Moreover, in this case we have:
\begin{equation*}
\begin{split}
E^{0,2}_1(\frg,J) &= E^{0,2}_2(\frg,J) = \langle[\omega^{\bar1\bar2}],\, [\omega^{\bar1\bar3}],\,  [\omega^{\bar2\bar4}], \, [\omega^{\bar3\bar4}]\rangle, \\[4pt]  
E^{0,2}_r(\frg,J) &= \langle[\omega^{\bar1\bar2}],\, [\omega^{\bar1\bar3}],\, [\omega^{\bar2\bar4}]\rangle,\ \ {\mbox  for} \ r\geq 3.
\end{split}
\end{equation*}
\end{proposition}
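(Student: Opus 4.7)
The plan is to follow the explicit description \eqref{E-0-2}--\eqref{X-0-2} page-by-page, using the structure equations \eqref{ecus-I} with $\varepsilon=1$.

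First I would identify $E_1^{0,2}(\frg,J)$. Decomposing $d=\partial+\db$ on the basis of $(0,1)$-forms, the only nonzero $(0,2)$-contribution is $\db\omega^{\bar 3}=\omega^{\bar 1\bar 4}$; a quick inspection of $\db$ on the six basis elements of $\Lambda^{0,2}(\frg,J)$ then shows that $\omega^{\bar 2\bar 3}$ is the unique one that fails to be $\db$-closed (with image $\omega^{\bar 1\bar 2\bar 4}$), while $\db\Lambda^{0,1}(\frg,J)=\langle\omega^{\bar 1\bar 4}\rangle$. Hence $E_1^{0,2}(\frg,J)$ is four-dimensional with the stated generators, for \emph{every} value of the parameters.

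Next, to verify $E_1^{0,2}=E_2^{0,2}$, I would exhibit for each generator a primitive $\alpha_{1,1}\in\Lambda^{1,1}(\frg,J)$ satisfying $\partial\omega^{\bar i\bar j}+\db\alpha_{1,1}=0$. One can take $\alpha_{1,1}=0$ for $[\omega^{\bar 1\bar 2}]$, $\alpha_{1,1}=a\,\omega^{2\bar 2}$ for $[\omega^{\bar 1\bar 3}]$ (using $\db\omega^{2\bar 2}=\omega^{1\bar 1\bar 2}$), and $\alpha_{1,1}=\omega^{2\bar 4}-\omega^{4\bar 2}+2i\nu\,\omega^{2\bar 2}$ for $[\omega^{\bar 2\bar 4}]$; all three are verified by direct expansion. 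The case $[\omega^{\bar 3\bar 4}]$ is more delicate: the ansatz $\alpha_{1,1}=\sum_{i,j}c_{i\bar j}\,\omega^{i\bar j}$ yields a linear system in the coefficients which is solvable thanks precisely to the hypothesis $ab\neq 0$.

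The heart of the argument is the computation of $d_2[\alpha_{0,2}]=[\partial\alpha_{1,1}]\in E_2^{2,1}(\frg,J)$. Three of the four generators lie in $\ker d_2$: trivially for $[\omega^{\bar 1\bar 2}]$; for $[\omega^{\bar 1\bar 3}]$ because $\partial(a\,\omega^{2\bar 2})=-a\,\omega^{12\bar 1}=\db\omega^{13}\in\db\Lambda^{2,0}(\frg,J)\subseteq\mathcal{Y}_2^{2,1}(\frg,J)$; and analogously for $[\omega^{\bar 2\bar 4}]$ via the match $\partial\alpha_{1,1}^{(\bar 2\bar 4)}=-\db\omega^{24}$. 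The decisive case $[\omega^{\bar 3\bar 4}]$ amounts to testing whether $\partial\alpha_{1,1}^{(\bar 3\bar 4)}$ lies in $\mathcal{Y}_2^{2,1}(\frg,J)=\db\Lambda^{2,0}(\frg,J)+\partial(\ker\db|_{\Lambda^{1,1}(\frg,J)})$; parameterising via a second ansatz and eliminating variables reduces the question to the vanishing of a scalar obstruction which, after simplification, should factor as the product $\Theta(\delta,\nu,a,b)=((a-b)^2-2\delta\nu b)((a+b)^2-2\delta\nu b)$. I expect this factorisation — tracking the coefficients carefully and recognising the algebraic identity — to be the main technical obstacle of the proof.

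Finally, $E_r^{0,2}=E_3^{0,2}$ for every $r\geq 3$ is quick: for $r\geq 4$, $d_r$ lands in $E_r^{r,3-r}$, which is trivial by bidegree, while $d_3$ vanishes on the three surviving classes by choosing $(2,0)$-primitives $\alpha_{2,0}$ equal to $0$, $-\omega^{13}$ and $\omega^{24}$ respectively (motivated by the computation of $d_2$), and observing $\partial\alpha_{2,0}=0$ from $\partial\omega^1=\partial\omega^2=\partial\omega^4=0$.
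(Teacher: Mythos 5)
Your proposal is correct and follows essentially the same route as the paper's proof: compute $E_1^{0,2}$ from $\db\Lambda^{0,1}=\langle\omega^{\bar1\bar4}\rangle$, exhibit explicit zig-zags killing $d_2$ and $d_3$ on $[\omega^{\bar1\bar2}],[\omega^{\bar1\bar3}],[\omega^{\bar2\bar4}]$ (the paper uses exactly your primitives $a\,\omega^{2\bar2},-\omega^{13}$ and $2i\nu\,\omega^{2\bar2}+\omega^{2\bar4}-\omega^{4\bar2},\omega^{24}$), and reduce the fate of $[\omega^{\bar3\bar4}]$ to a scalar obstruction in $\mathcal{Y}^{2,1}_2=\db\Lambda^{2,0}+\partial(\ker\db|_{\Lambda^{1,1}})$ after quotienting the $\db$-closed $(1,1)$-forms by the $d$-closed ones. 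The only content you defer --- solving the coefficient system $\partial(\gamma+\sigma)=\db\alpha_{2,0}$ and recognising the residual equation as $\Theta(\delta,\nu,a,b)=0$ --- is precisely the explicit computation the paper carries out, including the observation that in the degenerate case $\lambda_{23}=0$ forces $\partial\alpha_{2,0}=0$, so $E_3^{0,2}=E_\infty^{0,2}$ either way.
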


\begin{proof}
We use the description given in \eqref{E-0-2} and \eqref{X-0-2}
to compute the terms $E_r^{0,2}(\frg,J)$. From the complex structure equations \eqref{ecus-I} it follows that the space of $\bar\partial$-closed $(0,2)$-forms is given by $\mathcal X^{0,2}_1(\frg,J)= \langle \omega^{\bar1\bar2},\, \omega^{\bar1\bar3},\, \omega^{\bar1\bar4},\, \omega^{\bar2\bar4}, \, \omega^{\bar3\bar4} \rangle$. 
Since $\db\left(\Lambda^{0,1}(\frg,\!J)\right) = \langle \omega^{\bar1\bar4}\rangle$, we obtain the space $E_1^{0,2}(\frg,J)$ given in the statement above.  

Note that $\omega^{\bar1\bar2}$ is a $d$-closed form, so $[\omega^{\bar1\bar2}]\in E^{0,2}_r(\frg,J)$ for every $r\geq 2$.
Moreover, the classes $[\omega^{\bar1\bar3}]$ and $[\omega^{\bar2\bar4}]$ also belong to $E^{0,2}_r(\frg,J)$ for every $r\geq 2$, due to the following relations:
$$
\begin{array}{l l l l l l l l l l l l}
\partial \omega^{\bar1\bar3} &\!\!\!+\!\!\!&   \bar\partial (a\,\omega^{2\bar2})&\!\!\!=\!\!\!&0,&    \quad   &\partial \omega^{\bar2\bar4} &\!\!\!+\!\!\!&   \bar\partial (2i\,\nu\,\omega^{2\bar2}+ \omega^{2\bar4} - \omega^{4\bar2})&\!\!\!=\!\!\!&0,&\\[5pt]
&& \partial (a\,\omega^{2\bar2})&\!\!\!+\!\!\!&\bar\partial (-\omega^{13})=  0,&   &&& \partial (2i\,\nu\,\omega^{2\bar2}+\omega^{2\bar4} - \omega^{4\bar2})&\!\!\!+\!\!\!&\bar\partial (\omega^{24}) =\!\!\!&0,\\[5pt]
&&&&\partial (-\omega^{13})=  0;&  &&&&&\partial (\omega^{24}) =\!\!\!&0.
\end{array}
$$


Let us focus now on the $(0,2)$-form $\omega^{\bar3\bar4}$. Since $ab\neq 0$, one can verify that $\partial \omega^{\bar3\bar4} + \bar\partial \gamma=0$ for the $(1,1)$-form $\gamma$ given by 
$$
\gamma = \frac{i\delta (-a^2 +b^2+ 2\delta \nu b)}{2b}\omega^{2\bar3} - \frac{a^2-b^2+2\delta \nu b}{2ab}\big(b\,\omega^{3\bar2} +a\,\omega^{4\bar1}\big) -\frac{i\delta(a^2+b^2-2\delta \nu b)}{2ab}\omega^{3\bar4} - \omega^{4\bar3}.
$$
Therefore, $[\omega^{\bar3\bar4}]\in E^{0,2}_2(\frg,J)$, and 
$E^{0,2}_1(\frg,J) = E^{0,2}_2(\frg,J)$. 

\medskip

Notice that any $(1,1)$-form $\alpha_{1,1}$ satisfying the condition $\partial \omega^{\bar3\bar4} + \bar\partial \alpha_{1,1}=0$ can be written as $\alpha_{1,1}=\gamma+\sigma$, with $\sigma \in {\mathcal X}^{1,1}_1(\frg,\!J)$, i.e. $\sigma$ is any $\bar\partial$-closed (1,1)-form. Since we have to study the solutions of the equation $\partial\alpha_{1,1} + \bar\partial \alpha_{2,0}=0$, for some form $\alpha_{2,0}$ of bidegree $(2,0)$, it is enough to consider the (1,1)-forms $\sigma$ in the quotient space
$$
{\mathcal V}(\frg,\!J)={\mathcal X}^{1,1}_1(\frg,\!J)/\{d\mbox{-closed } (1,1)\mbox{-forms} \}.
$$
Using \eqref{ecus-I} it is not difficult to see that 
${\mathcal V}(\frg,\!J) = \langle \omega^{1\bar3}+\omega^{2\bar4}, \omega^{1\bar4} \rangle$, 
so we can express $\sigma$ as  
$$\sigma =  c_1\,(\omega^{1\bar3}+\omega^{2\bar4})+c_2\,\omega^{1\bar4}
,\quad \text{where \,}c_1,c_2\in\C.$$ 
%
%
%
%
Then, we get 
\begin{eqnarray*}
\partial (\gamma + \sigma) 
&\!\!=\!\!& i\,(\nu-\delta b)\,c_1\,\omega^{12\bar1} 
+ \frac{i\,\delta\,a (a^2 - b^2-2\delta\nu  b) +2\,b^2\,c_2}{2b}\,
	\omega^{12\bar2} 
-i\delta\, c_1\, \omega^{12\bar3} \\
&&+\frac{a^2\,(b+\delta\nu)-b(b-2\delta\nu)(b-\delta\nu)-2\,i\,\delta\,a\,b\,c_2}{2ab}\, 
	\omega^{13\bar1}  
-\frac{a^2+b^2-2\delta \nu b}{2ab}\,\omega^{13\bar3}\\
&& + c_1\,\omega^{14\bar1} 
+ \frac{a^2+b^2-2\delta \nu b}{2a}\,\omega^{14\bar2}  
- \frac{i\delta(a^2+b^2-2\delta \nu b)}{2ab}\,\omega^{14\bar4} \\
&&   -i\delta \,c_1\,\omega^{23\bar1}
+ \frac{i\delta(a^2+b^2-2\delta \nu b)}{2a}\,\omega^{23\bar2}
- \frac{i\delta (a^2+b^2 - 2\delta \nu b)}{2b}\omega^{24\bar1}.
\end{eqnarray*} 

\medskip

Now, observe that any $(2,0)$-form $\alpha_{2,0}$ can be written as $\alpha_{2,0} = \sum_{1\leq r< s\leq 4}\lambda_{rs}\,\omega^{rs}$, with $\lambda_{rs}\in\C$. A direct calculation shows 
\begin{eqnarray*}
\bar\partial\alpha_{2,0} &\!\!=\!\!& (-a \lambda_{13} + i\,\nu\,\lambda_{24})\omega^{12\bar 1} -b\,(\lambda_{14} - i\,\delta\,\lambda_{23})\omega^{12\bar 2} + i\,\delta\,\lambda_{24}\omega^{12\bar 3} + \lambda_{23}\omega^{12\bar 4}\\ 
&& +\, i\,(\delta\,\lambda_{14}+i\,\lambda_{23} + \nu\,\lambda_{34})\omega^{13\bar 1}
+  i\,\delta\,\lambda_{34}\omega^{13\bar3} -\lambda_{24}\omega^{14\bar1}-  i\,\delta\,b\,\lambda_{34}\omega^{14\bar2}\\ 
&& -\lambda_{34}\omega^{14\bar4} + i\,\delta\,\lambda_{24}\omega^{23\bar1} + b\,\lambda_{34}\omega^{23\bar2} - a\,\lambda_{34}\omega^{24\bar1}.
\end{eqnarray*}

\medskip

We have to study the equation $0=\partial (\gamma + \sigma) - \bar\partial\alpha_{2,0}=\sum A_{rs\bar k}\,\omega^{rs\bar k}$, where the coefficients $A_{rs\bar k}$ can be directly obtained from the previous expressions.
In particular, we will obtain some relations among $c_1$, $c_2$, $\lambda_{rs}$ and the parameters defining the complex structure $J$ that ensure $A_{rs\bar k}=0$ for all $r,s,k$.
First, observe that from $A_{12\bar4}=A_{14\bar1}=A_{14\bar4}=0$ one gets
$$\lambda_{23}=0, \qquad
\lambda_{24}=-c_1, \qquad
\lambda_{34}=\frac{i\,\delta\,(a^2+b^2-2\,\delta\,\nu\,b)}{2ab}.
$$
These values make most of the coefficients $A_{rs\bar k}$ to vanish, with the exception of $A_{12\bar1}$, $A_{12\bar2}$ and $A_{13\bar1}$. 
From the vanishing of the first and second aforementioned coefficients, one obtains
$$\lambda_{13}=\frac{i\,c_1\,(\delta\,b-2\nu)}{a}, \qquad
\lambda_{14}=-\frac{i\,\delta\,a\,(a^2-b^2-2\,\delta\,\nu\,b)+2\,b^2\,c_2}{2\,b^2}.$$
This immediately gives that $A_{13\bar1}=0$ if and only if
$$
\big((a-b)^2-2\delta \nu b\big)\big((a+b)^2-2\delta \nu b\big)=0.
$$

Consequently, if $\Theta(\delta,\nu,a,b)\neq0$ 
then $[\omega^{\bar3\bar4}]\in E^{0,2}_2(\frg,J)$, but $[\omega^{\bar3\bar4}]\notin E^{0,2}_3(\frg,J)$. However, if $\Theta(\delta,\nu,a,b)=0$ then $[\omega^{\bar3\bar4}]\in E^{0,2}_r(\frg,J)$ for every $r\geq 3$, because $\lambda_{23}=0$ implies that $\partial\alpha_{2,0}=0$.
This completes the proof of the proposition.
\end{proof}



In the following result an interesting subclass of structures in the Family II is studied. 

\begin{proposition}\label{no-deg-fam2-NLA}
Let $J$ be an SnN complex structure in Family II defined by $\varepsilon=\mu=1$ (hence $\nu=0$). Then, $E_2^{0,2}(\frg,J) \not= E_3^{0,2}(\frg,J)$. 
Moreover,
\begin{equation*}
\begin{split}
E^{0,2}_1(\frg,J) &= E^{0,2}_2(\frg,J) = \langle [i\,\omega^{\bar{1}\bar{2}} \!-\omega^{\bar{2}\bar{4}}], [\omega^{\bar{1}\bar{3}}\!- i\,\omega^{\bar{3}\bar{4}}] \rangle, \\[4pt] 
E^{0,2}_r(\frg,J) &= \langle [i\,\omega^{\bar{1}\bar{2}}\!-\omega^{\bar{2}\bar{4}}] \rangle,\ \ {\mbox  for} \ r\geq 3.
\end{split}
\end{equation*}
\end{proposition}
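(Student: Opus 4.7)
The plan is to mirror closely the strategy used in the proof of Proposition~\ref{no-deg-fam1-NLA}: work entirely at the Lie-algebra level on $(\frg,J)$, apply the explicit description \eqref{E-0-2}--\eqref{X-0-2} of $E^{0,2}_r(\frg,J)$, and either produce or obstruct the successive primitives $\alpha_{1,1}$ and $\alpha_{2,0}$ that are needed to carry each generator up through the pages.

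First, from \eqref{ecus-II} with $\varepsilon=\mu=1$ and $\nu=0$ one computes $\bar\partial\omega^{\bar 1}=\bar\partial\omega^{\bar 4}=0$, $\bar\partial\omega^{\bar 2}=\omega^{\bar 1\bar 4}$ and $\bar\partial\omega^{\bar 3}=\omega^{\bar 1\bar 2}-i\,\omega^{\bar 2\bar 4}$. Extending $\bar\partial$ to $(0,2)$-forms one finds
$$
{\mathcal X}^{0,2}_1(\frg,J)=\langle\omega^{\bar 1\bar 2},\,\omega^{\bar 1\bar 4},\,\omega^{\bar 2\bar 4},\,\omega^{\bar 1\bar 3}-i\,\omega^{\bar 3\bar 4}\rangle,
$$
whereas $\bar\partial\bigl(\Lambda^{0,1}(\frg,J)\bigr)=\langle\omega^{\bar 1\bar 4},\,\omega^{\bar 1\bar 2}-i\,\omega^{\bar 2\bar 4}\rangle$, so the quotient gives $E^{0,2}_1(\frg,J)$ of dimension $2$ with basis $\{[i\,\omega^{\bar 1\bar 2}-\omega^{\bar 2\bar 4}],\,[\omega^{\bar 1\bar 3}-i\,\omega^{\bar 3\bar 4}]\}$.

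Second, to prove $E^{0,2}_1=E^{0,2}_2$ I would exhibit explicit primitives $\alpha_{1,1}$ solving $\partial\alpha+\bar\partial\alpha_{1,1}=0$ for each generator. For the first one this is transparent from Leibniz: since $\partial\bar\partial\omega^{\bar 3}+\bar\partial\partial\omega^{\bar 3}=0$, one obtains $\partial(i\,\omega^{\bar 1\bar 2}-\omega^{\bar 2\bar 4})=\bar\partial(i\,\partial\omega^{\bar 3})$, so the choice $\alpha_{1,1}=-i\,\partial\omega^{\bar 3}$ works and, crucially, it satisfies $\partial\alpha_{1,1}=-i\,\partial^2\omega^{\bar 3}=0$. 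Taking $\alpha_{2,0}=0$ completes the chain, which shows $[i\,\omega^{\bar 1\bar 2}-\omega^{\bar 2\bar 4}]\in{\mathcal X}^{0,2}_r(\frg,J)$ for every $r$ and so survives as generator of $E^{0,2}_r(\frg,J)$ for all $r\geq 1$. For the second generator $\alpha'=\omega^{\bar 1\bar 3}-i\,\omega^{\bar 3\bar 4}$, I would solve a small linear system to produce an invariant $\alpha'_{1,1}$ with $\partial\alpha'+\bar\partial\alpha'_{1,1}=0$, placing $[\omega^{\bar 1\bar 3}-i\,\omega^{\bar 3\bar 4}]$ into ${\mathcal X}^{0,2}_2(\frg,J)$.

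Third, I would study $d_2$ on this second class. Following the scheme of Proposition~\ref{no-deg-fam1-NLA}, the full freedom in $\alpha'_{1,1}$ is captured by an ambiguity $\sigma\in{\mathcal X}^{1,1}_1(\frg,J)$, and I would compute $\partial\alpha'_{1,1}$ and compare it with $\bar\partial\alpha_{2,0}$ for an arbitrary $\alpha_{2,0}=\sum_{r<s}\lambda_{rs}\,\omega^{rs}$. The plan is to isolate a single monomial of $\partial\alpha'_{1,1}$---concretely, $\omega^{24\bar 4}$---that carries a nonzero coefficient which cannot be absorbed into either $\bar\partial\alpha_{2,0}$ or $\partial\sigma$. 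For the first obstruction, a direct inspection of $\bar\partial\omega^{rs}$ for $1\leq r<s\leq 4$ shows that $\omega^{24\bar 4}$ never occurs. For the second, one verifies that the only $(1,1)$-monomial whose $\partial$ produces an $\omega^{24\bar 4}$ term is $\omega^{3\bar 4}$, and that every $\sigma\in{\mathcal X}^{1,1}_1(\frg,J)$ has vanishing $\omega^{3\bar 4}$-component.

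The main technical obstacle is this final rigidity---the vanishing of the $\omega^{3\bar 4}$-coefficient in every $\bar\partial$-closed $(1,1)$-form---which has to be forced by playing the $\bar\partial\omega^{j\bar k}$ coefficient equations against one another, and which must hold unconditionally on $a$ and $b$. This is in sharp contrast to Family~I, where the analogous step had a proper subfamily $\Theta(\delta,\nu,a,b)=0$ on which the obstruction collapsed; here the restrictions $\varepsilon=\mu=1$ and $\nu=0$ eliminate the degree of freedom needed for such a cancellation. Once this is established, the $\omega^{24\bar 4}$-coefficient is an irremovable obstruction, forcing $d_2[\omega^{\bar 1\bar 3}-i\,\omega^{\bar 3\bar 4}]\neq 0$ in $E^{2,1}_2(\frg,J)$ and giving $E^{0,2}_r(\frg,J)=\langle[i\,\omega^{\bar 1\bar 2}-\omega^{\bar 2\bar 4}]\rangle$ for every $r\geq 3$.
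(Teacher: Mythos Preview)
Your overall architecture is right, but two concrete points fail.

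First, the Leibniz trick for the first generator has a sign slip. From $\bar\partial\omega^{\bar 3}=\omega^{\bar 1\bar 2}-i\,\omega^{\bar 2\bar 4}$ and $\partial\bar\partial+\bar\partial\partial=0$ you get $\partial(i\,\omega^{\bar 1\bar 2}+\omega^{\bar 2\bar 4})=\bar\partial(-i\,\partial\omega^{\bar 3})$, i.e.\ the identity concerns the $\bar\partial$-\emph{exact} combination $i\,\bar\partial\omega^{\bar 3}=i\,\omega^{\bar 1\bar 2}+\omega^{\bar 2\bar 4}$, not the nonzero class $[i\,\omega^{\bar 1\bar 2}-\omega^{\bar 2\bar 4}]$. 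So the trick is vacuous here. The paper instead produces a primitive by hand: $\partial(i\,\omega^{\bar 1\bar 2}-\omega^{\bar 2\bar 4})+\bar\partial\,\omega^{4\bar 2}=0$ and $\partial\,\omega^{4\bar 2}=0$.

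Second, and more seriously, your obstruction via the single monomial $\omega^{24\bar 4}$ does not work: your claim that ``$\omega^{24\bar 4}$ never occurs in $\bar\partial\omega^{rs}$'' is false. Indeed $\bar\partial\omega^3$ contains $i\,\omega^{2\bar 4}$, so $\bar\partial\omega^{34}$ contributes $-i\,\omega^{24\bar 4}$ (this is exactly the $-i\lambda_{34}\,\omega^{24\bar 4}$ term in the paper's expansion of $\bar\partial\alpha_{2,0}$). Hence the $\omega^{24\bar 4}$-coefficient is absorbed by a suitable choice of $\lambda_{34}$ and gives no obstruction on its own. The actual contradiction in the paper is reached only by comparing several coefficients simultaneously: after working modulo $d$-closed $(1,1)$-forms so that $\sigma=c_1\,\omega^{1\bar 4}+c_2\,\omega^{2\bar 4}$, the equations $A_{13\bar 1}=A_{13\bar 4}=0$ force $\lambda_{14}=-c_1$, $\lambda_{23}=0$, then $A_{12\bar 1}=0$ gives $\lambda_{13}=1$ while $A_{12\bar 4}=0$ gives $\lambda_{13}=-\tfrac12$. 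No single coefficient carries the obstruction; it is this incompatibility that shows $d_2[\omega^{\bar 1\bar 3}-i\,\omega^{\bar 3\bar 4}]\neq 0$ for all $a,b$.
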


\begin{proof}
We use again the description given in \eqref{E-0-2} and \eqref{X-0-2} to compute the desired terms of the FSS. 
From the complex structure equations~\eqref{ecus-II} it follows that the space of $\bar\partial$-closed $(0,2)$-forms is given by ${\mathcal X}^{0,2}_1(\frg,J)=\langle \omega^{\bar{1}\bar{2}}, \omega^{\bar{1}\bar{4}},\omega^{\bar{2}\bar{4}}, \omega^{\bar{1}\bar{3}}-i\,\omega^{\bar{3}\bar{4}} \rangle$.
Since $\db\left(\Lambda^{0,1}(\frg,\!J)\right) = \langle \omega^{\bar{1}\bar{4}}, 
\omega^{\bar{1}\bar{2}}-i\,\omega^{\bar{2}\bar{4}} \rangle$, 
we obtain the space $E_1^{0,2}(\frg,J)$ in the statement.

Now, since 
\begin{equation*}
\begin{array}{llllll}
\partial (i\,\omega^{\bar1\bar2} - \omega^{\bar2\bar4}) & \!\!+\!\! &   \bar\partial\, \omega^{4\bar2}&\!\!=\!\!&0,\\[4pt]
&& \partial\, \omega^{4\bar2}&\!\!=\!\!&0,
\end{array}
\end{equation*}
we conclude that the $(0,2)$-form $i\,\omega^{\bar{1}\bar{2}} -\omega^{\bar{2}\bar{4}}$ defines a non-zero class in $E^{0,2}_r(\frg,J)$ for every $r$. 

\medskip

Let us now consider the form $\omega^{\bar{1}\bar{3}}-i\,\omega^{\bar{3}\bar{4}}$.  One can check that $\partial(\omega^{\bar{1}\bar{3}}-i\,\omega^{\bar{3}\bar{4}}) + \db\gamma=0$, where 
$$
\gamma = \frac{3ia}{2}\,\omega^{1\bar{2}} +\frac{1}{2}\,\omega^{1\bar{3}} +\frac{1}{2}\,\omega^{3\bar{1}} +\frac{i}{2}\,\omega^{3\bar{4}} +i\,\omega^{4\bar{3}},
$$
therefore 
$
E^{0,2}_2(\frg,J)=E^{0,2}_1(\frg,J)$. To finish the proof we will show that the form  $\omega^{\bar{1}\bar{3}}-i\,\omega^{\bar{3}\bar{4}}$ does not  belong to the space ${\mathcal X}^{0,2}_3(\frg,J)$.
As in the proof of Proposition~\ref{no-deg-fam1-NLA}, this is equivalent to prove that $\partial(\gamma+\sigma)\notin \db\left(\Lambda^{2,0}(\frg,\!J)\right)$ for every $(1,1)$-form $\sigma$ representing a class in the quotient space
$$
{\mathcal V}(\frg,\!J)={\mathcal X}^{1,1}_1(\frg,\!J)/\{d\mbox{-closed } (1,1)\mbox{-forms} \}.
$$
In other words, next we will prove that $\partial(\gamma+\sigma) + \bar\partial \alpha_{2,0}\neq0$ for every such $\sigma$ and every form $\alpha_{2,0}$ of bidegree $(2,0)$.

Using \eqref{ecus-II} one can see that 
${\mathcal V}(\frg,\!J) = \langle \omega^{1\bar4},\omega^{2\bar4} \rangle$, 
so we can express $\sigma$ as  
$\sigma =  c_1\,\omega^{1\bar4}+c_2\,\omega^{2\bar4}
$, where $c_1,c_2\in\C$.  
A direct computation shows that
\begin{eqnarray*}
\partial (\gamma + \sigma) 
&\!\!=\!\!& (1\!-ibc_1)\omega^{12\bar{1}} \!- (c_1+ibc_2)\omega^{12\bar2} \!- ic_2\,\omega^{12\bar3}
+\frac i2\omega^{12\bar{4}} \!-ic_1\,\omega^{13\bar1}+ \frac b2\,\omega^{13\bar{2}} +\frac12 \omega^{13\bar{3}} \\
&&  +\frac{ia}{2}\,\omega^{14\bar{1}} +\frac i2\,\omega^{14\bar{2}} + c_2\,\omega^{14\bar4} - \frac{b+2ic_2}{2}\omega^{23\bar{1}}+ \frac i2\omega^{23\bar{2}} -\frac i2\,\omega^{24\bar{1}} -\frac12\omega^{24\bar{4}}.
\end{eqnarray*} 

Now, writing any (2,0)-form $\alpha_{2,0}$ as $\alpha_{2,0} = \sum_{1\leq r< s\leq 4}\lambda_{rs}\,\omega^{rs}$, with $\lambda_{rs}\in\C$, from \eqref{ecus-II} we get 
\begin{eqnarray*}
\bar\partial\alpha_{2,0} &\!\!=\!\!& (\lambda_{13} + ib\lambda_{14}+a\lambda_{23})\omega^{12\bar 1} +(\lambda_{14}+\lambda_{23}+ ib\lambda_{24})\omega^{12\bar 2} + i\,\lambda_{24}\omega^{12\bar 3} -i\, \lambda_{13}\omega^{12\bar 4}\\[4pt] 
&& +\, i\lambda_{14}\omega^{13\bar 1}
+  ib\lambda_{34}\omega^{13\bar2} +i\lambda_{34}\omega^{13\bar3}- \lambda_{23}\omega^{13\bar4} -a\lambda_{34}\omega^{14\bar1}-\lambda_{34}\omega^{14\bar2}\\[4pt] 
&&-\lambda_{34}\omega^{14\bar4} + i(\lambda_{24}-b\lambda_{34})\omega^{23\bar1} -\lambda_{34}\omega^{23\bar2} + \lambda_{34}\omega^{24\bar1}-i\lambda_{34}\omega^{24\bar4}.
\end{eqnarray*}

Let us study the condition $0=\partial (\gamma + \sigma) - \bar\partial\alpha_{2,0}=\sum A_{rs\bar k}\,\omega^{rs\bar k}$, where the coefficients $A_{rs\bar k}$ are obtained directly from the previous expressions. From $A_{13\bar1}=A_{13\bar4}=0$ one gets $\lambda_{14}=-c_1$ and $\lambda_{23}=0$. Hence, $A_{12\bar1}=0$ is equivalent to $\lambda_{13}=1$. 
However, $A_{12\bar4}=0$ gives $\lambda_{13}=-\frac 12$, which is a contradiction. 
\end{proof}

We recall that, by the classification obtained in \cite[Theorem 1.1]{LUV3}, an 8-dimensional NLA $\frg$ with one dimensional center admits a complex structure if and only if it is isomorphic to one (and only one) in the following list:

\vskip.3cm

$
\begin{array}{rl}
&\mathfrak \frg_{1}^{\gamma} = (0^5,\, 13+15+24,\, 14-23+25,\, 16+27+\gamma\!\cdot\! 34), \
	\text{where }\gamma\in\{0,1\},\\[6pt]
&\mathfrak \frg_2^{\alpha} = (0^4,\, 12,\, 13+15+24,\, 14-23+25,\, 16+27+\alpha\!\cdot\! 34), \
	\text{where } \alpha\in\mathbb R, \\[6pt]
&\mathfrak \frg_3^{\gamma} = (0^4,\,12,\,
	13+\gamma\!\cdot\! 15+25,\, 15+24+\gamma\!\cdot\! 25,\, 16+27), \
	\text{ where } \gamma\in\{0,1\}, \\[6pt]
&\mathfrak \frg_4^{\alpha,\,\beta} = (0^4,\,12,\, 15+(\alpha\!+\!1)\!\cdot\! 24,\,
	\, (\alpha\!-\!1)\!\cdot\! 14-23+(\beta\!-\!1)\!\cdot\! 25,\,
	16+27+34-2\!\cdot\! 45), \\[4pt]
	& \hskip1cm \text{ where } (\alpha, \beta)\in\mathbb R^*\times \R^+ \text{ or }\
	\R^+\times \{0\}, \\[6pt]
&\mathfrak \frg_5 = (0^4,\,2\!\cdot\! 12,\,14-23,\,13+24,\,16+27+35),\\[6pt]
&\mathfrak \frg_6 = (0^4,\,2\!\cdot\! 12,\,14+15-23,\,13+24+25,\,16+27+35),\\[6pt]
&\mathfrak \frg_7 = (0^5,\,15,\,25,\,16+27+34),\\[6pt]
&\mathfrak \frg_8 = (0^4,\,12,\,15,\,25,\,16+27+34),\\[6pt]
&\mathfrak \frg_9^{\gamma} = (0^3,\, 13,\, 23,\, 35,\, \gamma\!\cdot\! 12-34,\, 16+27+45),\
	\text{where }\gamma\in\{0,1\},\\[6pt]
&\mathfrak \frg_{10}^{\gamma} = (0^3,\, 13,\, 23,\, 14+25,\, 15+24,\, 16+ \gamma\!\cdot\! 25 +27),\
	\text{where }\gamma\in\{0,1\},\\[6pt]
&\mathfrak \frg_{11}^{\alpha, \beta} = \big(0^3,\, 13,\, 23,\, 14+25-35,\, \alpha \!\cdot\! 12+15+24+34,\, 16+27-45-\beta(2\!\cdot\! 25 + 35)\big),\\[6pt]
& \hskip1cm \text{ where } (\alpha, \beta)=(0,0), (1,0) \text{ or } (\alpha, 1) \text{ with } \alpha\in[0,+\infty), \\[6pt]
&\mathfrak \frg_{12}^{\gamma} = (0^2,\, 12,\, 13,\, 23,\, 14+25,\, 15+24,\, 16+27+ \gamma\!\cdot\! 25),\
	\text{where }\gamma\in\{0,1\}.
\end{array}
$

\vskip.4cm

In the description of the nilpotent Lie algebras above we are using the standard abbreviated notation,
where the $i$-th component of the tuple contains the differential of the $i$-th element
of the basis. We recall that complex structures on the NLAs $\frg_1^{\gamma},\ldots,\frg_8$ belong to Family I, whereas those on $\frg_9^{\gamma}$, $\frg_{10}^{\gamma}$, $\frg_{11}^{\alpha,\beta}$ and $\frg_{12}^{\gamma}$ belong to Family~II.
Moreover, the previous list is ordered according to the dimensions of the ascending central series of the algebras.

We recall that the precise relation between these 8-dimensional NLAs and the classification of complex structures showed in Proposition~\ref{main-theorem} can be found in~\cite{LUV3}.
Nevertheless, the essential information is gathered in the Tables~1 and 2 (columns 1, 2 and 5), 
where we also sum up the behaviour of the sequence $E_r^{0,2}(\frg,\!J)$ for any complex structure $J$ in the Families I and II, respectively (columns 3 and 4). Note that this completes the results obtained 
in Propositions~\ref{no-deg-fam1-NLA} and~\ref{no-deg-fam2-NLA}.
We omit the details here. 

\newpage
In both tables we denote by $e_r^{0,\,2}$ the dimension of the term $E_r^{0,\,2}(\frg,J)$ in the spectral sequence. In Table~1, the parameter $s$ stands for the sign of $b\!-\!2\delta \nu$. 

\vspace{1cm}

\renewcommand{\arraystretch}{1.2}
\setlength{\tabcolsep}{1pt}
\begin{center}
\resizebox{15cm}{!} {
\begin{tabular}{|c|c|c|c|c|}
\hline
\begin{tabular}{c} Ascending \\[-4pt] type \end{tabular} & $J\!\equiv\!(\varepsilon,\nu,a,b)$ & 
\small{$\big(e_1^{0,2},e_2^{0,2},e_3^{0,2}\big)$} & Sequence $E_r^{0,2}(\frg,\!J)$ & NLA $\frg$ \\
\hline\hline
$(1,3,8)$ 
& $\begin{array}{c}\big(0,0,1,b\big) \\[-2pt] b\in\{0,1\}\end{array}$
& (4,2,2)
& $E_1^{0,2}\neq E_2^{0,2}=E_{\infty}^{0,2}$
& $\mathfrak g_1^b$ \\
\hline\hline
\multirow{19}{*}{$(1,3,6,8)$} 
& $(0,1,1,\frac{\delta}{2})$
& $(4,3,3)$
& \multirow{3}{*}{$E_1^{0,2}\neq E_2^{0,2}=E_{\infty}^{0,2}$}
& \multirow{3}{*}{$\mathfrak g_2^{-4\delta b}$} \\[1pt]
\cline{2-3}
& $\begin{array}{c}(0,1,1,b) \\[-2pt] b\in\mathbb R\!-\!\{\frac{\delta}{2}\}\end{array}$
& $(4,2,2)$
&
&  \\
\cline{2-5}
& $\begin{array}{c}(1,1,a,0)\\[-2pt] a\in(0,2)\end{array}$
& \multirow{5}{*}{$(4,3,3)$}
& \multirow{5}{*}{$E_1^{0,2}\neq E_2^{0,2}=E_{\infty}^{0,2}$}
& $\mathfrak g_2^{0}$ \\
\cline{2-2}\cline{5-5}
& $(1,1,2,0)$
& 
&
& $\mathfrak g_3^{1}$ \\
\cline{2-2}\cline{5-5}
& $\begin{array}{c}(1,1,a,0)\\[-2pt] a\in(2,\infty) \end{array}$
&
&
& \multirow{2}{*}{$\mathfrak g_3^{0}$}\\
\cline{2-2}
& $(1,0,1,0)$
& 
&
&  \\
\cline{2-5}
& $(1,0,1,1)$
& $(4,4,4)$
& $E_1^{0,2}=E_{\infty}^{0,2}$
& \multirow{8}{*}{$\mathfrak g_4^{\frac {sa}{b}, \frac{\vert b-2\delta \nu\vert}{a}}$} \\
\cline{2-4}
& $\begin{array}{c}(1,0,1,b)\\[-4pt] 
b\in\R^+\!\!-\!\{1\}\end{array}$
& $(4,4,3)$
& $E_1^{0,2}=E_2^{0,2}\neq E_3^{0,2}=E_{\infty}^{0,2}$
&  \\
\cline{2-4}
& $\begin{array}{c}(1,1,a,b)\\[-4pt] a>0,\, b \in\R^*, \\[-2pt]
\Theta(\delta,1,a,b)=0\end{array}$
& $(4,4,4)$
& $E_1^{0,2}=E_{\infty}^{0,2}$
&  \\
\cline{2-4}
& $\begin{array}{c}(1,1,a,b)\\[-4pt] a>0,\, b \in\R^*, \\[-2pt]
\Theta(\delta,1,a,b)\neq0\end{array}$
& $(4,4,3)$
& $E_1^{0,2}=E_2^{0,2}\neq E_3^{0,2}=E_{\infty}^{0,2}$
&  \\
\hline\hline
$(1,4,8)$ 
& $(1,1,0,2\delta)$
& $(4,4,4)$
& $E_1^{0,2}=E_{\infty}^{0,2}$
& $\mathfrak g_5$ \\
\hline\hline
\multirow{3}{*}{$(1,4,6,8)$} 
& \multirow{1}{*}{$(1,0,0,1)$}
& \multirow{3}{*}{$(4,3,3)$}
& \multirow{3}{*}{$E_1^{0,2}\neq E_2^{0,2}=E_{\infty}^{0,2}$}
& \multirow{3}{*}{$\mathfrak g_6$} \\[-12pt]
&
&
&
&	\\
\cline{2-2}
& \multirow{2}{*}{$\begin{array}{c}(1,1,0,b)\\[-3pt] b\neq 0, 2\delta\end{array}$}
& 
&
&	\\[-2pt]
&
& 
&
&	\\
\hline\hline
$(1,5,8)$ 
& $(0,0,0,1)$
& $(4,4,4)$
& $E_1^{0,2}=E_{\infty}^{0,2}$
& $\mathfrak g_7$ \\
\hline\hline
 $(1,5,6,8)$ 
& $\begin{array}{c}(0,1,0,b)\\[-2pt] b\in\{-1,\, 1\} \end{array}$
& $(4,2,2)$
& $E_1^{0,2}\neq E_2^{2,0}=E_{\infty}^{2,0}$
& $\mathfrak g_8$ \\
\hline
\multicolumn{4}{c}{ \quad\quad\quad \bf{Table 1. FSS for complex structures in Family I} }
\end{tabular}
}
\end{center}


\newpage

\renewcommand{\arraystretch}{1.4}
\renewcommand{\tabcolsep}{2pt}
\begin{center}
\begin{tabular}{|c|c|c|c|c|}
\hline
\begin{tabular}{c} Ascending \\[-8pt] type \end{tabular} & $J\!\equiv\!(\varepsilon,\mu,\nu,a,b)$ & \small{$(e_1^{0,2},\, e_2^{0,2},\, e_3^{0,2})$} & Sequence $E^{0,2}_r(\frg,\!J)$ & NLA $\frg$ \\
\hline\hline
\multirow{11}{*}{$(1,3,5,8)$} & $(0,1,0, 0,0)$
& \multirow{2}{*}{$(2,2,2)$}
& \multirow{2}{*}{$E^{0,2}_1\!=E^{0,2}_{\infty}$}
& $\mathfrak g_{9}^0$ \\
\cline{2-2}\cline{5-5}
	& $(0,1,0, 1, 0)$
& 
&
&$\mathfrak g_{9}^1$ \\
\cline{2-5}
	& $\begin{array}{c}(1,0,0, a, 0)\\[-4pt] a\in\{0,1\}\end{array}$ 
& \multirow{3}{*}{$(2,2,2)$}
& \multirow{3}{*}{$E^{0,2}_1\!=E^{0,2}_{\infty}$}
&$\mathfrak g_{10}^0$ \\
	\cline{2-2}\cline{5-5}
	&$\begin{array}{c}(1,0,0, a, b)\\[-4pt] a\!\in\!\{0,1\},\, b\in\R^*\end{array}$  
	& \, &
& $\mathfrak g_{10}^1$ \\
	\cline{2-5}
	& $(1,1,0, 0, 0)$
& \multirow{5}{*}{$(2,2,1)$} & \multirow{5}{*}{$E^{0,2}_1\!=E^{0,2}_2\!\neq E^{0,2}_3\!=E^{0,2}_{\infty}$}
& $\mathfrak g_{11}^{0,0}$ \\
\cline{2-2}\cline{5-5}
	& $\begin{array}{c}(1,1,0, a, 0)\\[-4pt] a\in\R^*\end{array}$
& &
& $\mathfrak g_{11}^{1,0}$ \\
\cline{2-2}\cline{5-5}
	& $\begin{array}{c}(1,1,0, a, b)\\[-4pt] a\in\R,b\in\R^*\end{array}$
& &
& $\mathfrak g_{11}^{\frac{2\sqrt3|a|}{|b|}, 1}$\\
\hline\hline
\multirow{3}{*}{$(1,3,5,6,8)$} & $\begin{array}{c}(1,0,1, a, 0)\\[-4pt] a\in\R\end{array}$ 
& \, \multirow{3}{*}{$(2,2,2)$} & \multirow{3}{*}{$E^{0,2}_1\!=E^{0,2}_{\infty}$}
& $\mathfrak g_{12}^0$\\
\cline{2-2}\cline{5-5}
	& $\begin{array}{c}(1,0,1, a, b)\\[-4pt] a\in\R, b\in\R^*\end{array}$
& \, &
& $\mathfrak g_{12}^1$ \\
	\hline
\multicolumn{4}{c}{ \quad\quad\quad \bf{Table 2. FSS for complex structures in Family II} }
\end{tabular}
\end{center}


\bigskip

\begin{remark}\label{problem-Stelzig}
{\rm 
In \cite[Problem 11.2]{Stelzig-2021} Stelzig asks for the construction, for every $n\geq 3$, of a compact complex manifold $X$ with $\dim_{\C}X=n$ and with nonvanishing differential on page $E_{n-1}(X)$ starting in bidegree
$(0,n-1)$ or $(0,n-2)$. For $n=3$, any complex nilmanifold $X=(\nilm,J)$ has differential $d_2\colon E_2^{0,1}(X)\longrightarrow E_2^{2,0}(X)$ identically zero (see \cite[Proposition 8.11]{Stelzig-2021} or \cite[Theorem 4.1]{COUV}). 
From the Tables 1 and 2 we get that $d_{3}\colon E_{3}^{0,2}(\frg,J)\longrightarrow E_{3}^{3,0}(\frg,J)$ vanishes for any SnN complex structure $J$ on an 8-dimensional NLA $\frg$. 
One may then ask
whether the differential $d_{n-1}\colon E_{n-1}^{0,n-2}\longrightarrow E_{n-1}^{n-1,0}$ always vanishes on complex nilmanifolds.
}
\end{remark}

We can finally apply the previous results (namely, 
Propositions~\ref{NLA-a-nilvariedad},~\ref{no-deg-fam1-NLA} and~\ref{no-deg-fam2-NLA}) to obtain 
new compact complex manifolds with $d_2\neq0$. To our knowledge, no compact complex manifold of complex dimension $4$ with $d_3\not=0$ is known.
Let $G$ be the simply-connected nilpotent Lie group associated to $\frg$, where $\frg$ is isomorphic to any one of the NLAs 
in the list above. Since $\gamma\in\{0,1\}$, all the Lie algebras are rational except for possibly $\frg_2^{\alpha},\frg_4^{\alpha,\beta}$ and $\frg_{11}^{\alpha,\beta}$. For these algebras, it follows from their structure equations that they are also rational algebras whenever $\alpha,\beta\in\Q$.
Hence, the existence of a lattice $\Gamma$ for the associated nilpotent Lie groups is guaranteed by the well-known Mal'cev theorem \cite{Malcev}. 
Therefore, many compact complex nilmanifolds $X=(\nilm,J)$ 
with $\dim_{\C} X=4$ 
can be defined in this way.


\begin{theorem}\label{FSS-on-Xs}
Let $\nilm$ be a nilmanifold endowed with a complex structure $J$ in any of the following cases: 

\hskip-.15cm $\bullet$ $J$ in Family I defined by $(\varepsilon, \nu, a, b)=(1,0,1,b)$ with $b\in\Q^+-\{1\}$, or $(1,1,a,b)$ with $(a,b)\in$ 

\ $\Q^+\times\Q^*$ satisfying $\Theta(\delta,1,a,b)\not=0$;

\vskip.05cm

\hskip-.15cm $\bullet$ $J$ in Family II defined by $(\varepsilon, \mu, \nu, a, b)=(1,1,0,0,0)$,  $(1,1,0,a,0)$ with $a\in\R^*$, or $(1,1,0,a,b)$ 

\ with $(a,b)\in\R\times\R^*$ satisfying $\frac{2\sqrt3|a|}{|b|}\in\Q$.

Then, the compact complex manifold $X=(\nilm,J)$ has Fr\"olicher spectral sequence not degenerating at the second page.
\end{theorem}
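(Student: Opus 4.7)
The plan is to reduce the theorem to three already-proved ingredients plus a parameter-by-parameter check. Proposition~\ref{NLA-a-nilvariedad} (applied with $r=2$) transfers non-vanishing of $d_2$ from the Lie-algebra-level sequence $E_r(\frg,J)$ to the nilmanifold $X$. Propositions~\ref{no-deg-fam1-NLA} and~\ref{no-deg-fam2-NLA} provide precisely what is needed at the Lie-algebra level, namely $E_2^{0,2}(\frg,J)\neq E_3^{0,2}(\frg,J)$; because the incoming $d_2$ into bidegree $(0,2)$ vanishes by bidegree reasons (as noted in the discussion preceding Proposition~\ref{no-deg-fam1-NLA}), this inequality is equivalent to $d_2\not\equiv 0$ on $E_2^{0,2}(\frg,J)$. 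The remaining task is to verify, for each parameter choice listed, that (i) the hypotheses of the appropriate Lie-algebra proposition hold and (ii) the underlying Lie algebra has rational structure constants, so that Mal'cev's theorem produces a lattice $\Gamma\subset G$ and the compact complex nilmanifold $X=(\nilm,J)$ is well-defined.

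For Family~I, Proposition~\ref{no-deg-fam1-NLA} requires $\varepsilon=1$, $ab\neq 0$, and $\Theta(\delta,\nu,a,b)\neq 0$. When $(\varepsilon,\nu,a,b)=(1,0,1,b)$ with $b\in\Q^+-\{1\}$, one has $ab=b\neq 0$ and $\Theta(\delta,0,1,b)=(1-b)^2(1+b)^2\neq 0$, since $b>0$ and $b\neq 1$. When $(\varepsilon,\nu,a,b)=(1,1,a,b)$ with $(a,b)\in\Q^+\times\Q^*$ and $\Theta(\delta,1,a,b)\neq 0$, both conditions are built into the hypothesis. According to Table~1, both sub-families are realized on the NLA $\frg_4^{sa/b,\,|b-2\delta\nu|/a}$; since $a,b\in\Q$, both exponents are rational, and so are the structure constants of $\frg_4^{sa/b,\,|b-2\delta\nu|/a}$. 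For Family~II, Proposition~\ref{no-deg-fam2-NLA} requires $\varepsilon=\mu=1$ (hence $\nu=0$), which holds for each of the three listed tuples. Table~2 identifies the associated NLAs as $\frg_{11}^{0,0}$, $\frg_{11}^{1,0}$, and $\frg_{11}^{2\sqrt{3}|a|/|b|,\,1}$; the first two have integer parameters, and the third has rational parameters precisely by the hypothesis $\frac{2\sqrt{3}|a|}{|b|}\in\Q$.

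In every case the NLA is thus rational, Mal'cev's theorem furnishes a lattice $\Gamma\subset G$, and Proposition~\ref{NLA-a-nilvariedad} yields $E_2(X)\neq E_\infty(X)$, as required. The main effort is bookkeeping: matching each parameter tuple to its NLA via Proposition~\ref{main-theorem} and Tables~1--2, and checking rationality in the resulting NLA parameters. There is no deeper obstacle, since all the analytic content is already absorbed into the preceding propositions; the only delicacy is that in the last Family~II case the complex-structure parameters $a,b$ themselves need not be rational, and rationality must instead be read off from the combination $\frac{2\sqrt{3}|a|}{|b|}$ that determines the isomorphism class of the underlying $\frg_{11}^{\alpha,\beta}$.
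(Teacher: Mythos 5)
Your proposal is correct and follows essentially the same route as the paper: reduce to Propositions~\ref{NLA-a-nilvariedad}, \ref{no-deg-fam1-NLA} and~\ref{no-deg-fam2-NLA}, then verify the hypotheses and the rationality of the underlying NLAs via Tables~1 and~2 so that Mal'cev's theorem supplies a lattice. Your added remarks (the explicit factorization $\Theta(\delta,0,1,b)=(1-b)^2(1+b)^2$ and the observation that $E_2^{0,2}\neq E_3^{0,2}$ is equivalent to $d_2\not\equiv 0$ since the incoming differential vanishes for bidegree reasons) are accurate and match the paper's surrounding discussion.
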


\begin{proof}
Let $J$ be a complex structure in Family I defined by $(\varepsilon, \nu, a, b)=(1,0,1,b)$, with $b\in\Q^+-\{1\}$, or $(1,1,a,b)$, with $(a,b)\in\Q^+\times\Q^*$ satisfying $\Theta(\delta,1,a,b)\not=0$. In view of Table 1, in the first case the underlying Lie algebra is $\frg_4^{\frac{1}{b},b}$, and in the second one we have $\frg_4^{\frac{s a}{b},\frac{|b-2\delta|}{a}}$, where $s$ denotes the sign of $b-2\delta$. Since $\delta=\pm1$, we always get rational Lie algebras. Note also that 
$$
\Theta(\delta,1,a,b)=
(a^2-b^2)^2-4\delta b(a^2+b^2)+4 b^2,
$$
hence, given any $b\in\Q^*$ we can choose $\delta$ so that $\delta b<0$ and thus 
$\Theta(\delta,1,a,b)\not=0$. 

\smallskip

Now, let $J$ be a complex structure in Family II defined by $(\varepsilon, \mu, \nu, a, b)=(1,1,0,0,0)$,  $(1,1,0,a,0)$ with $a\in\R^*$, or $(1,1,0,a,b)$ with $(a,b)\in\R\times\R^*$ satisfying $\frac{2\sqrt3|a|}{|b|}\in\Q$. 
In view of Table 2, the underlying NLAs are $\frg_{11}^{0,0}$, $\frg_{11}^{1,0}$ or $\frg_{11}^{q,1}$, with $q=\frac{2\sqrt3|a|}{|b|}\in\Q$ and $q\geq 0$, depending on the case, which are always rational. Note that for the last two NLAs there are one-parameter families of (non-isomorphic) complex structures.

\smallskip

To get the desired result it suffices to observe that for any lattice $\Gamma$ in the Lie group $G$ associated to any of these rational Lie algebras, the FSS satisfies $E_2(\nilm,J)\neq E_3(\nilm,J)$ by Propositions~\ref{NLA-a-nilvariedad}, \ref{no-deg-fam1-NLA} and~\ref{no-deg-fam2-NLA}.
\end{proof}

\section{The Fr\"olicher spectral sequence of balanced manifolds}\label{balanced-section}

\noindent 
In this section we study the existence of balanced metrics on $8$-dimensional nilmanifolds $M$ 
endowed with an SnN complex structure~$J$. 
As an application, we get infinitely many compact balanced 
manifolds with complex dimension 4 and different homotopy types whose FSS does not degenerate at the second page. This is also extended to non-degeneration at any arbitrary page.

\medskip
A compact complex manifold $X$ of complex dimension $n$ is said to be \emph{balanced} if there is a Hermitian metric $F$ on $X$ 
satisfying $dF^{n-1}=0$. These metrics were first studied in \cite{Mi} and play an important role in geometry and many aspects in theoretical and mathematical physics.
Notice that for any $n\geq 3$ one has
$$dF^{n-1}=(n-1)\,dF\wedge F^{n-2}
	=(n-1)\,\big(\partial F \wedge F^{n-2} + \bar{\partial} F\wedge F^{n-2}\big).$$
Since $F$ is a real form, the two summands in the last expression above are conjugate to each other, 
and the balanced condition is equivalent to
\begin{equation}\label{balanced-equiv}
\partial F \wedge F^{n-2}=0.
\end{equation}

We recall that when $X=(M,J)$ is a nilmanifold $M$ 
endowed with an invariant complex structure~$J$, 
by the symmetrization process the existence of balanced metric on $X$ implies the
existence of an invariant one, i.e. a balanced metric on the underlying Lie algebra $(\frg, J)$.

Let us consider a basis of invariant $(1,0)$-forms $\{\omega^k\}_{k=1}^4$ on a complex nilmanifold $X=(M,J)$, where $M$ is $8$-dimensional. Then, in terms of this basis, any invariant Hermitian metric $F$ on $X$ is given by 
\begin{equation}\label{formaFund}
F=\sum_{k=1}^4 i\,x_{k\bar k}\,\omega^{k\bar k}
   \ +\sum_{1\leq k<l\leq 4}\big( x_{k\bar l}\,\omega^{k\bar l}-\bar x_{k\bar l}\,\omega^{l\bar k} \big),
\end{equation}
for some coefficients $x_{k\bar k}\in\mathbb R$ and $x_{k\bar l}\in\mathbb C$. 
Associated to $F$, we consider the $4\times 4$ matrix 
\begin{equation}\label{matriz}
H=\begin{pmatrix}
x_{1\bar1} & -i\,x_{1\bar 2} & -i\,x_{1\bar 3} & -i\,x_{1\bar 4} \\
i\,\bar x_{1\bar2} & x_{2\bar2} & -i\,x_{2\bar 3} & -i\,x_{2\bar 4} \\
i\,\bar x_{1\bar3} & i\,\bar x_{2\bar3} & x_{3\bar 3} & -i\,x_{3\bar 4} \\
i\,\bar x_{1\bar4} & i\,\bar x_{2\bar4} & i\,\bar x_{3\bar4} & x_{4\bar 4}
\end{pmatrix}.
\end{equation}
Let us denote by $H_{rs}$ the determinant of the $3\times 3$ submatrix
obtained by removing the $r$-th row and the $s$-th column from $H$. Note that $H_{sr}=\bar{H}_{rs}$. The metric $F$ being positive implies, in particular, $H_{rr}>0$ for every $1\leq r\leq 4$.

\vskip.2cm

\begin{lemma}\label{balanced-FI-FII}
Let $M$ be an $8$-dimensional nilmanifold endowed with an SnN complex structure~$J$. Let $F$ be any Hermitian metric on $(M,J)$ defined by~\eqref{formaFund} in a $(1,0)$-frame $\{\omega^k\}_{k=1}^4$ satisfying 
the equations~\eqref{ecus-I} or~\eqref{ecus-II}. We have:
\begin{enumerate}
\item[(i)] If $J$ belongs to Family I, then $F$ is balanced if and only if 
$$
\varepsilon=\nu=0, 
\qquad 
a\, H_{12} + \bar{H}_{14}=0, 
\qquad
b\, H_{22} - 2\delta\,\Imag\!(H_{13})=0.
$$
%
\item[(ii)] If $J$ belongs to Family II, then $F$ is balanced if and only if 
$$
\nu=0, \quad
H_{14}=0, \quad
2\varepsilon\,\Imag\!(H_{12})-\mu\,\bar{H}_{24} - ia\, H_{11}=0,
\quad
\mu\,H_{22} - 2b\,\Imag\!(H_{12}) + 2\,\Imag\!(H_{13}) = 0.$$
\end{enumerate}
\end{lemma}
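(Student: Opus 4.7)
My plan is to reduce the balanced condition to $\partial F \wedge F^2 = 0$ via the equivalence \eqref{balanced-equiv} (using $n=4$) and then compute this wedge product explicitly in both families. This is a $(4,3)$-form equation on the $4$-dimensional space $\Lambda^{4,3}(\frg,J)$, with basis $\{\omega^{1234\bar{\jmath}\bar{k}\bar{l}} : 1\le j<k<l\le 4\}$, so it yields (at most) four complex coefficient equations to analyze.

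First I would use the structure equations \eqref{ecus-I}, resp.\ \eqref{ecus-II}, to read off $\partial\omega^k$ and $\partial\omega^{\bar k}$. In Family I only $\omega^3$ has a nontrivial $(2,0)$-component in its differential, namely $\partial\omega^3=\omega^{14}$, while $\partial\omega^{\bar l}=\overline{\bar\partial\omega^l}$ for $l=2,3,4$ produces the terms involving $\varepsilon,\nu,a,b,\delta$. An analogous read-off applies in Family II, now with $\omega^2,\omega^3$ contributing $(2,0)$-parts. Expanding $F$ via \eqref{formaFund}, I obtain $\partial F$ as a $(2,1)$-form whose coefficients are linear in the structure parameters and in the Hermitian entries $x_{k\bar l}$.

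The central computational mechanism is a Laplace-type identity: up to sign, the coefficient of $\omega^{1234\bar{\jmath}\bar{k}\bar{l}}$ in $F^3/3!$ is the cofactor of $H$ obtained by omitting the missing row/column, so wedging a $(2,1)$-monomial $\omega^{pq\bar r}$ against $F^2$ produces $3\times 3$ minors of $H$. This is precisely how the quantities $H_{11},H_{12},H_{13},H_{14},H_{22},H_{24}$ appear. Equating each of the four basis coefficients of $\partial F \wedge F^2$ to zero then yields a linear system whose coefficients are (linear combinations of) the $H_{rs}$ multiplied by $\varepsilon,\nu,a,b,\mu$. For Family I, the coefficients of $\omega^{1234\bar 2\bar 3\bar 4}$ are proportional to $\varepsilon H_{11}$ and $\nu H_{11}$, and since $H_{11}=x_{1\bar 1}>0$ this forces $\varepsilon=\nu=0$; the remaining two coefficients collapse to $aH_{12}+\bar H_{14}=0$ and $bH_{22}-2\delta\,\Imag(H_{13})=0$. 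For Family II, the analogous analysis extracts the stated four conditions: the vanishing of $\nu$ and of the minor $H_{14}$, together with the two linear relations coupling $\varepsilon,\mu,a,b$ to the minors $H_{11},H_{12},H_{13},H_{22},H_{24}$.

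The main obstacle is purely bookkeeping: tracking signs coming from reorderings of wedge products, from conjugations ($\overline{\omega^{k\bar l}}=-\omega^{l\bar k}$), and from the Laplace expansion, and verifying that the identifications with $H_{rs}$ are correct. Each individual step is elementary, but the off-diagonal entries of $F$ and the presence of several structure parameters make a careful organization essential; doing the computation family by family, and within each family basis element by basis element, keeps the system tractable.
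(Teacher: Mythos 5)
Your proposal is correct and follows essentially the same route as the paper: compute $\partial F\wedge F^2$ in the basis $\omega^{1234\bar\jmath\bar k\bar l}$, identify the coefficients with $3\times3$ minors $H_{rs}$, use $H_{11}>0$ to force $\varepsilon=\nu=0$ (resp. $\nu=0$), and read off the remaining linear conditions. The only slip is a bookkeeping one you already flagged: in Family I the terms $\varepsilon H_{11}$ and $\nu H_{11}$ actually sit in the coefficients of $\omega^{1234\bar1\bar3\bar4}$ and $\omega^{1234\bar1\bar2\bar3}$ respectively (the monomial $\omega^{1234\bar2\bar3\bar4}$ does not occur), but this does not affect the argument.
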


\begin{proof}
To prove (i) we calculate \eqref{balanced-equiv} for $n=4$ taking into account the complex equations~\eqref{ecus-I}. We get
$$
0=\frac12\, \partial F \wedge F^{2}
= A_{I}\,\omega^{1234\bar 1\bar2\bar 3}
+B_{I}\,\omega^{1234\bar 1\bar2\bar 4}
+C_{I}\,\omega^{1234\bar 1\bar3\bar 4},
$$
where 
\begin{equation*}
\begin{split}
A_{I}&=-\nu\,H_{11} -i\, b\,H_{22} + 2\,i\,\delta\,\Imag\!(H_{13}), \\
B_{I}&=-\delta\,\varepsilon\,b\,\bar{H}_{12} - i\, a\,H_{12} -i\,\bar{H}_{14}, \\
C_{I}&=-i\,\varepsilon\,H_{11}.
\end{split}
\end{equation*}
Since $H_{11}>0$, the condition $C_{I}=0$ implies $\varepsilon=0$. 
Similarly, since $\Real(A_{I})=-\nu\,H_{11}=0$ we get $\nu=0$.
Now, taking $\varepsilon=\nu=0$, the remaining conditions 
are $\Imag(A_{I})=0$ and $B_{I}=0$, and the statement in the part (i) of the lemma follows.

\smallskip

For the proof of (ii) we use the complex equations~\eqref{ecus-II} to compute \eqref{balanced-equiv} for $n=4$. One has
$$
0=\frac12\,\partial F \wedge F^{2}
= A_{I\!I}\,\omega^{1234\bar 1\bar2\bar 3}
+B_{I\!I}\,\omega^{1234\bar 1\bar2\bar 4}
+C_{I\!I}\,\omega^{1234\bar 1\bar3\bar 4},
$$
where 
\begin{equation*}
\begin{split}
A_{I\!I}&=-\nu\,H_{11} + i\,\Big(\mu\,H_{22} - 2\,b\,\Imag\!(H_{12})
	+ 2\,\Imag\!(H_{13})\Big), \\
B_{I\!I}&=-2\,\varepsilon\,\Imag\!(H_{12}) + \mu\,\bar{H}_{24} + i\,a\, H_{11}, \\
C_{I\!I}&=i\,\bar{H}_{14}.
\end{split}
\end{equation*}
From $\Real(A_{I\!I})=-\nu\,H_{11}=0$ and $H_{11}>0$, it follows that $\nu=0$. The remaining conditions in the statement come
directly from $\Imag(A_{I\!I})=0$, $B_{I\!I}=0$ and $C_{I\!I}=0$.
\end{proof}

The following result provides a classification of the SnN complex structures $J$ in eight dimensions admitting balanced metric. 

\begin{proposition}\label{prop-bal-fam1-fam2}
Let $M$ be an $8$-dimensional nilmanifold endowed with an SnN complex structure~$J$. Then, $(M,J)$ admits a balanced metric if and only if 
$J$ is equivalent to a complex structure defined by, either~\eqref{ecus-I} with tuple $(\varepsilon, \nu, a, b)$ being one of the following 
$$(0,0,0,1),\, (0,0,1, 0),\, (0,0, 1, 1),$$
or~\eqref{ecus-II} with tuple $(\varepsilon,\mu,\nu, a, b)$ one of the following 
$$(1, 1, 0, a, b),\, (1, 0, 0, 0, b),\, (0, 1, 0, 0, 0), \, (0, 1, 0, 1, 0).$$
\end{proposition}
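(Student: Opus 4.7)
My plan is to read off the classification directly from Lemma~\ref{balanced-FI-FII}, using the symmetrization argument recalled at the beginning of this section so that only invariant Hermitian metrics, i.e.\ matrices $H$ of the form~\eqref{matriz}, need to be considered.

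\textbf{Necessity.} In Family~I, Lemma~\ref{balanced-FI-FII}(i) forces $\varepsilon=\nu=0$; scanning the admissible tuples in Proposition~\ref{main-theorem}(i) eliminates every case except $(0,0,0,1)$, $(0,0,1,0)$, $(0,0,1,1)$. In Family~II, Lemma~\ref{balanced-FI-FII}(ii) forces $\nu=0$, leaving the five candidates $(1,1,0,a,b)$, $(1,0,0,0,b)$, $(1,0,0,1,b)$, $(0,1,0,0,0)$, $(0,1,0,1,0)$. The candidate $(1,0,0,1,b)$ is then ruled out by the third equation of Lemma~\ref{balanced-FI-FII}(ii): with $\varepsilon=1$, $\mu=0$, $a=1$ it reads $2\,\Imag(H_{12})-i\,H_{11}=0$, whose real and imaginary parts force $\Imag(H_{12})=0$ and $H_{11}=0$, contradicting $H_{11}>0$.

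\textbf{Sufficiency.} For each of the seven surviving tuples I exhibit a positive-definite $H$ solving the corresponding system. The tuples $(0,0,1,0)$ in Family~I and $(1,0,0,0,b)$ in Family~II admit the diagonal metric $H=\mathrm{diag}(x_1,x_2,x_3,x_4)$ with any $x_k>0$, since every off-diagonal cofactor entering the balanced equations vanishes and the coefficient of the surviving $H_{22}$-term (namely $b$ or $\mu$) is zero. For the remaining five cases the coefficient of $H_{22}$ is nonzero, so diagonal metrics cannot work; I perturb the diagonal ansatz by a single real entry $x_{1\bar 3}=r\in\R$ (and, in the Family~II cases where the second equation involves $H_{24}$ nontrivially, also a real $x_{2\bar 4}$), chosen so as to produce $\Imag(H_{13})$ of the correct sign. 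A direct cofactor calculation then reduces the coupled balanced system to a single real equation of the shape $x_1 x_3=r^2-2\delta x_2 r$ (and its Family~II analogue), which admits solutions inside the positivity cone $H>0$ provided $r$ is chosen with sign opposite to $\delta$ (resp.\ $\mu$).

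\textbf{Main obstacle.} The substantive work is the sufficiency direction, where for each non-diagonal case one must ensure that the sign choices made to solve the balanced equation are compatible with positivity of every leading principal minor of $H$. Once the one-parameter ansatz above is fixed the remaining verifications are short algebraic checks; the only care required is coordinating the sign of $r$ with the discrete parameters $\delta\in\{\pm 1\}$ and $\mu\in\{0,1\}$ so that both $x_1x_3>r^2$ and the balanced identity hold simultaneously.
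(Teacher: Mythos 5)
Your proposal is correct and follows essentially the same route as the paper: necessity by reading $\varepsilon=\nu=0$ (resp.\ $\nu=0$) off Lemma~\ref{balanced-FI-FII} and eliminating $(1,0,0,1,b)$ via $H_{11}>0$, and sufficiency by exhibiting explicit positive-definite solutions (the paper's metric for Family~II with $\mu=1$ is in fact an instance of your diagonal-plus-$x_{1\bar 3},x_{2\bar 4}$ ansatz, while its Family~I, $b=1$ metric uses a different but equivalent perturbation). The only slip is cosmetic: in the Family~II case the balanced equation becomes $x_4(x_1x_3-r^2)=2r(x_2x_4-t^2)$, which forces $r>0$, i.e.\ the \emph{same} sign as $\mu=1$ rather than the opposite one, but the required solution in the positivity cone still exists.
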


\begin{proof}
When $J$ belongs to Family I, it follows from Lemma~\ref{balanced-FI-FII}~(i)
that the possible values for the tuple 
$(\varepsilon, \nu, a, b)$ are $(0,0,0,1)$, $(0,0,1, 0)$ or $(0,0, 1, 1)$. 
We will find a balanced metric $F$ for each case. 
Observe that $a,\,b\in\{0,1\}$. We distinguish
two cases according to the value of $b$:

$\bullet$ If $b=0$, then $a=1$ and the equations in Lemma~\ref{balanced-FI-FII}~(i) reduce to 
$H_{12} + \bar{H}_{14}=0$  
and 
$\Imag\!(H_{13})=0$. To obtain a solution it suffices to consider the metric $F$ defined by taking $H$ in~\eqref{matriz} as the identity matrix.

$\bullet$ If $b=1$, we consider the metric $F$ defined by 
$$x_{1\bar1}=x_{3\bar3}=x_{4\bar4}=1, \quad x_{2\bar2}=4, \quad
x_{1\bar 2}=i, \quad
x_{1\bar 3}=x_{1\bar4}=x_{3\bar 4}=0,  \quad x_{2\bar 3}=\frac{\delta}{2}, \quad x_{2\bar 4}=i\,a.$$
It can be checked that this choice indeed defines a positive-definite metric for which one has  $H_{12}=1$, $H_{13}=\frac{\delta\, i}{2}$, $H_{14}=-a$, and $H_{22}=1$. 
Since $\delta=\pm1$, all the conditions in Lemma~\ref{balanced-FI-FII}~(i) are satisfied and the metric $F$ is balanced.

\smallskip

Suppose now that $J$ belongs to Family II. Lemma~\ref{balanced-FI-FII}~(ii)
implies that the possible values for the tuple 
$(\varepsilon, \mu, \nu, a, b)$ are 
$(1, 1, 0, a, b)$, $(1, 0, 0, 0, b)$, $(1, 0, 0, 1, b)$, $(0, 1, 0, 0, 0)$ or $(0, 1, 0, 1, 0)$. 
Let us distinguish
two cases depending on the value of the parameter $\mu$:

$\bullet$ If $\mu=0$, then we have $(\varepsilon, \mu, \nu, a, b)=(1, 0, 0, 0, b)$ or $(1, 0, 0, 1, b)$.
Observe that one of the equations in 
Lemma~\ref{balanced-FI-FII}~(ii) becomes $2\,\Imag\!(H_{12})- i\,a\, H_{11}=0$, which is equivalent to $\Imag\!(H_{12})=0$ and $a\, H_{11}=0$. 
Since $H_{11}>0$, we conclude that $a=0$ in order that $J$ admits a balanced metric.
Therefore, there are no balanced metrics for 
the tuple $(1, 0, 0, 1, b)$. However, for the case $(1, 0, 0, 0, b)$ one can check that the metric $F$ defined by taking $H$ in~\eqref{matriz} as the identity matrix is balanced.
 
$\bullet$  If $\mu=1$, we take $F$ defined by 
$$x_{1\bar1}=x_{3\bar3}=x_{4\bar4}=1, \quad x_{2\bar2}=a^2+\frac 34, \quad
x_{1\bar 2}=x_{1\bar 4}=x_{2\bar3}=x_{3\bar 4}=0, \quad
x_{1\bar 3}=\frac 12, \quad x_{2\bar 4}=-a.$$
It can be checked that it defines a positive-definite metric. Moreover, $H_{11}=\frac{3}{4}$, $H_{12}=0$, $H_{13}=-\frac{3\,i}{8}$, $H_{14}=0$, $H_{22}=\frac{3}{4}$, and $H_{24}=\frac{3\,a\,i}{4}$. 
Since all the conditions in Lemma~\ref{balanced-FI-FII}~(ii) are satisfied, the metric $F$ is balanced.
\end{proof}

The following result is a consequence of Proposition~\ref{prop-bal-fam1-fam2} and the values given in the second and fifth columns of Tables 1 and 2. 
Here we are also using that the existence of a balanced metric implies the existence of an invariant one. 

\begin{theorem}\label{theorem-balanced}
If an $8$-dimensional nilmanifold endowed with an SnN complex structure admits a balanced metric, then its underlying Lie algebra is isomorphic to 
$\frg_1^{\gamma}$, $\frg_7$, $\frg_9^{\gamma}$, $\frg_{10}^{\gamma}$ or $\frg_{11}^{\alpha,\beta}$.
\end{theorem}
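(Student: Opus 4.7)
The plan is to combine the symmetrization principle for nilmanifolds with the parameter-level classification already obtained in Proposition~\ref{prop-bal-fam1-fam2}, and then read off the underlying Lie algebra from the complete dictionary recorded in Tables 1 and 2.

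First I would invoke the standard symmetrization argument used throughout the paper. Since $G$ is nilpotent, hence unimodular, averaging any Hermitian metric $F$ on $X=\Gamma\backslash G$ against the bi-invariant volume form of $G$ produces a $G$-invariant Hermitian metric $\tilde F$, and positivity is preserved by averaging a continuous family of positive inner products. Because symmetrization commutes with $\partial$ and $\bar\partial$ and preserves bidegrees, it carries the balanced equation $\partial F\wedge F^{n-2}=0$ to $\partial\tilde F\wedge\tilde F^{n-2}=0$. Consequently, if $X$ admits any balanced metric, the Lie algebra $(\frg,J)$ admits an invariant one, so we may assume from the outset that the balanced metric is left-invariant.

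Next, by Proposition~\ref{main-theorem} I can assume that $J$ is presented in the normal form \eqref{ecus-I} or \eqref{ecus-II}. Proposition~\ref{prop-bal-fam1-fam2} then pins down exactly which of the parameter tuples admit an invariant balanced metric: namely $(\varepsilon,\nu,a,b)\in\{(0,0,0,1),(0,0,1,0),(0,0,1,1)\}$ in Family~I, and $(\varepsilon,\mu,\nu,a,b)\in\{(1,1,0,a,b),(1,0,0,0,b),(0,1,0,0,0),(0,1,0,1,0)\}$ in Family~II. For each such tuple I would then look up the corresponding row of Tables 1--2 to identify the isomorphism class of $\frg$. The Family~I tuples yield $\frg_1^0$, $\frg_1^1$ (both from the row $(0,0,1,b)$ of Table 1) and $\frg_7$ (from the row $(0,0,0,1)$). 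In Family~II, $(0,1,0,0,0)\mapsto \frg_9^0$ and $(0,1,0,1,0)\mapsto \frg_9^1$; the tuples $(1,0,0,0,b)$ produce $\frg_{10}^0$ (when $b=0$) or $\frg_{10}^1$ (when $b\neq 0$); and the family $(1,1,0,a,b)$ produces $\frg_{11}^{0,0}$, $\frg_{11}^{1,0}$ or $\frg_{11}^{q,1}$ with $q=\tfrac{2\sqrt3\,|a|}{|b|}$. Taking the union yields exactly the list $\frg_1^{\gamma},\frg_7,\frg_9^{\gamma},\frg_{10}^{\gamma},\frg_{11}^{\alpha,\beta}$ in the statement.

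There is no essential obstacle: the real work is already contained in Proposition~\ref{prop-bal-fam1-fam2} (which solves the algebraic system coming from Lemma~\ref{balanced-FI-FII}) and in the normal-form dictionary established in~\cite{LUV3} and summarized in Tables 1 and 2. The proof reduces to bookkeeping, verifying both that every Lie algebra appearing in some row with an admissible tuple lies in the announced list, and that no other $\frg_i$ (in particular, none of $\frg_2^{\alpha},\frg_3^{\gamma},\frg_4^{\alpha,\beta},\frg_5,\frg_6,\frg_8,\frg_{12}^{\gamma}$) can appear, each of which is immediate from inspection of the tables.
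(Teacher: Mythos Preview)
Your approach is exactly that of the paper: reduce to the invariant case by symmetrization, invoke Proposition~\ref{prop-bal-fam1-fam2}, and then read off the underlying Lie algebras from columns~2 and~5 of Tables~1 and~2. One small caveat on your symmetrization step: as written it is not quite correct, since averaging is linear while $F\mapsto\partial F\wedge F^{n-2}$ is not, so $\widetilde{\partial F\wedge F^{n-2}}=0$ does not directly give $\partial\tilde F\wedge\tilde F^{\,n-2}=0$; the standard fix is to symmetrize $F^{n-1}$ itself (yielding a closed invariant positive $(n-1,n-1)$-form) and then extract its positive $(n-1)$-th root. The paper does not spell this out either, simply citing the reduction to invariant metrics as a known fact.
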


This result has a certain converse which is useful in the construction of balanced nilmanifolds satisfying additional properties. 
Indeed, by a similar argument as in Section~\ref{FSS-section}, starting with the rational Lie algebras $\frg_1^{\gamma}$, $\frg_7$ or~$\frg_9^{\gamma}$ we get  compact balanced nilmanifolds. Similarly, when we start with $\frg_{10}^{\gamma}$ endowed with any complex structure corresponding to a tuple $(\varepsilon, \mu, \nu, a, b)=(1, 0, 0, 0, b)$. 
Finally, compact balanced nilmanifolds can also be constructed starting with $\frg_{11}^{\alpha,\beta}$ for $ (\alpha, \beta)=(0,0)$, $(1,0)$ or 
$(q, 1)$ with rational $q\geq 0$, since from Table 2 in the latter case we can always choose complex structures defined by tuples $(\varepsilon, \mu, \nu, a, b)=(1,1,0, a, b)$, with $b\not=0$, such that~$\frac{2\sqrt3|a|}{|b|}=q$.

We observe that all the compact balanced nilmanifolds $X$ with underlying Lie algebra isomorphic to $\frg_{11}^{\alpha,\beta}$ satisfy $E_2(X) \not= E_{\infty}(X)$, due to Proposition~\ref{no-deg-fam2-NLA}. Moreover, non-isomorphic Lie algebras $\frg_{11}^{\alpha,\beta}$ and $\frg_{11}^{\alpha',\beta'}$ give rise to nilmanifolds $X$ and $X'$ with different minimal model by Hasegawa theorem \cite{Hasegawa}, hence with different real homotopy type (see \cite{BM2012, DGMS, FHT-libro, GM-libro, Sullivan} for results in homotopy theory). 
Furthermore, their complex homotopy types are also different:

\begin{theorem}\label{cor-infinite-bal-FSS}
There are infinitely many complex (hence, real or rational) homotopy types of compact 
balanced manifolds of complex dimension $4$ with Fr\"olicher spectral sequence not degenerating at the second page.
\end{theorem}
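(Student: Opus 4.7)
The plan is to upgrade the real-homotopy-distinctness already noted for the family $\{\frg_{11}^{q,1}\}_{q \in \Q_{\geq 0}}$ to distinctness at the level of complex homotopy type, while assembling the balanced-metric and FSS-non-degeneration data established earlier in this section and in Section~\ref{FSS-section}.

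For each rational $q \geq 0$ I would fix a pair $(a,b) \in \R \times \R^*$ with $\frac{2\sqrt 3\,|a|}{|b|} = q$ and consider on $\frg_{11}^{q,1}$ the Family~II complex structure $J_q$ associated with the tuple $(\varepsilon,\mu,\nu,a,b) = (1,1,0,a,b)$. Three ingredients then combine: Proposition~\ref{prop-bal-fam1-fam2} provides an invariant balanced metric for $J_q$; Proposition~\ref{no-deg-fam2-NLA} gives $E_2^{0,2}(\frg_{11}^{q,1},J_q) \neq E_3^{0,2}(\frg_{11}^{q,1},J_q)$ at the Lie-algebra level, hence $d_2\neq 0$ there; and rationality of $\frg_{11}^{q,1}$ together with Mal'cev's theorem yields a lattice $\Gamma_q$ in the associated simply connected Lie group $G_q$. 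Setting $X_q = \Gamma_q\backslash G_q$, one obtains a compact balanced complex nilmanifold of complex dimension $4$, and Proposition~\ref{NLA-a-nilvariedad} transfers the non-vanishing of $d_2$ from the Lie algebra to $X_q$, producing $E_2(X_q) \neq E_3(X_q)$ and therefore $E_2(X_q) \neq E_\infty(X_q)$.

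It remains to show that the countable family $\{X_q\}_{q\in\Q_{\geq 0}}$ realises infinitely many distinct \emph{complex} homotopy types. By the uniqueness clause in the classification \cite[Theorem 1.1]{LUV3} the algebras $\frg_{11}^{q,1}$ are pairwise non-isomorphic as real Lie algebras, and Hasegawa's theorem then produces pairwise non-isomorphic minimal Sullivan models over $\R$, so infinitely many distinct real (and hence rational) homotopy types. To upgrade this to the complex setting I would work with the complexified Chevalley–Eilenberg algebras $\Lambda^{*}(((\frg_{11}^{q,1})^{*})\otimes\C)$, which compute the complex minimal Sullivan models of the $X_q$, and argue that along the one-parameter family the complex isomorphism class changes infinitely often. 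This last point is the main obstacle, since in principle two non-isomorphic real Lie algebras can become isomorphic after tensoring with $\C$; I would overcome it either by extracting from the structure equations of $\frg_{11}^{q,1}\otimes\C$ a $\C$-invariant that records (up to finite ambiguity) the parameter $q$, or by exploiting the finiteness of the fibres of the natural map from real to complex isomorphism classes to restrict to a cofinite subset of $\Q_{\geq 0}$ on which the complexifications remain mutually non-isomorphic. Either route yields infinitely many distinct complex homotopy types, from which the parenthetical statement about real and rational types is automatic.
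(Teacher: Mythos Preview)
Your proposal is correct and follows essentially the same route as the paper: the same family $\{\frg_{11}^{q,1}\}_{q\in\Q_{\geq 0}}$ equipped with the Family~II structures $(\varepsilon,\mu,\nu,a,b)=(1,1,0,a,b)$, the same assembly of Propositions~\ref{prop-bal-fam1-fam2}, \ref{no-deg-fam2-NLA} and~\ref{NLA-a-nilvariedad} together with Mal'cev, and the same reduction of complex-homotopy distinctness (via Hasegawa) to non-isomorphism of the complexified Lie algebras. The only divergence is in that last step: the paper asserts the stronger claim that $(\frg_{11}^{\alpha,1})_{\C}\not\cong(\frg_{11}^{\alpha',1})_{\C}$ for \emph{every} pair $\alpha\neq\alpha'$, pointing to a long technical computation parallel to \cite[Lemmas~3.4--3.6]{LUV2} (details omitted), which is exactly your route~(a); your alternative route~(b), via finiteness of the set of real forms of a fixed complex nilpotent Lie algebra, is a legitimate and more economical shortcut that the paper does not mention---it only yields \emph{infinitely many} distinct complexifications rather than pairwise distinctness, but that is all the theorem requires (note your ``cofinite subset'' should read ``infinite subset'').
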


\begin{proof}
By the discussion above, it is enough to consider a family of compact balanced nilmanifolds $Y^4_{\alpha}$ with underlying Lie algebra isomorphic to $\frg_{11}^{\alpha,\beta}$ with $\alpha=q\geq 0$, $q\in \Q$, and $\beta=1$. 
To complete the proof it only remains to prove that the Lie algebras $\frg_{11}^{\alpha,1}$ and $\frg_{11}^{\alpha',1}$ are not isomorphic over $\C$ whenever $\alpha\neq \alpha'$. Indeed, this means that the $\C$-minimal models of  $Y^4_{\alpha}$ and  $Y^4_{\alpha'}$ are not isomorphic for $\alpha\neq \alpha'$. 
The proof of this fact is quite long and technical, and we give the details in the Appendix.
\end{proof}


Our next goal is to extend this result to get non-degeneration 
in arbitrary high pages. 
For this we first consider the nilmanifolds constructed by Bigalke and Rollenske in \cite[Theorem 1]{BR}. 
For every $n\geq 2$, 
they provide
a complex nilmanifold $X^{4n-2}$ 
of complex dimension $4n-2$ with 
Fr\"olicher spectral sequence 
not degenerating
at the $E_n$ term, i.e., $d_n \not= 0$. 
More concretely, the nilmanifold $X^{4n-2}$ has a basis of invariant (1,0)-forms given by 
$$
dx_1,\ldots,dx_{n-1},dy_1,\ldots,dy_{n},dz_1,\ldots,dz_{n-1},\omega^1,\ldots,\omega^n
$$ 
and satisfying the complex structure equations
\begin{equation}\label{ecusBR-1}
\left\{\begin{array}{ll}
d(dx_i)=d(dz_i)=0, &  i=1,\ldots,n-1,\\[1pt]
d(dy_j)=0, & j=1,\ldots,n,\\[1pt]
d\omega^1= - d\bar{y}^{1}\wedge dz_1, & \\[1pt]
d\omega^{k}=dx_{k-1}\wedge dy_k + dy_{k-1}\wedge d\bar{z}_{k-1}, & k=2,\ldots,n.
\end{array}\right.
\end{equation}
Let us consider the (1,0)-frame $\{\tau^i\}$ defined as

\vskip.3cm

$\tau^i=dx_i \ (1\leq i\leq n-1)$, \ $\tau^i=dy_{i-n+1} \ (n\leq i\leq 2n-1)$,  \ $\tau^i=dz_{i-2n+1} \  (2n\leq i\leq 3n-2)$,

\vskip.3cm

$\tau^i=\omega^{i-3n+3} \ (3n-1\leq i\leq 4n-3)$, and \ $\tau^{4n-2}=\omega^1$.

\vskip.3cm

\noindent The complex structure equations~\eqref{ecusBR-1} are expressed in this frame as
\begin{equation}\label{ecusBR-2}
\left\{\begin{array}{l}
d\tau^1=d\tau^2=\cdots=d\tau^{n-1}=0, \\[2pt]
d\tau^n=d\tau^{n+1}=\cdots=d\tau^{2n-2}=d\tau^{2n-1}=0, \\[2pt]
d\tau^{2n}=d\tau^{2n+1}=\cdots=d\tau^{3n-2}=0, \\[2pt]
d\tau^{3n-1}=\tau^1\wedge \tau^{n+1} \quad\ \,  + \tau^{n}\wedge \overline{\tau^{2n}}, \\[-2pt]
\quad \vdots \qquad\qquad\ \  \vdots \qquad\qquad\qquad\ \ \vdots  \\[-1pt]
d\tau^{4n-3}=\tau^{n-1}\wedge \tau^{2n-1} + \tau^{2n-2}\wedge \overline{\tau^{3n-2}}, \\[2pt]
d\tau^{4n-2}= \qquad\qquad\qquad\quad   \tau^{2n}\wedge \overline{\tau^{n}}.
\end{array}\right.
\end{equation}

Note that the complex structure $J$ on the (2-step) nilmanifold $X^{4n-2}$ is of nilpotent type.
Bigalke and Rollenske prove the following result:

\begin{proposition}\label{BR-dn}\cite[Lemma 2]{BR}
The differential form $\beta = \overline{\tau^{2n+1}} \wedge \ldots \wedge \overline{\tau^{3n-2}} \wedge  \overline{\tau^{4n-2}}$ defines a class $[\beta] \in E_n^{0,n-1}(X)$ such that   $d_n([\beta])= [\tau^{1} \wedge \ldots \wedge \tau^{n-1} \wedge \tau^{2n-1}] \not=0$ in $E_n^{n,0}(X)$.
\end{proposition}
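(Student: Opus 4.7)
The plan is to verify that $\beta$ is $\bar\partial$-closed, construct a chain of forms witnessing that $[\beta]$ survives to $E_n^{0,n-1}(X)$, compute $d_n([\beta])$ via \eqref{dr}, and then argue the resulting class is nonzero in $E_n^{n,0}(X)$.

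For the first two steps, \eqref{ecusBR-2} shows that each of $\overline{\tau^{2n+1}},\dots,\overline{\tau^{3n-2}}$ is $d$-closed while $d\overline{\tau^{4n-2}}=\overline{\tau^{2n}}\wedge\tau^n$ has pure bidegree $(1,1)$, so $\bar\partial\beta=0$ and $[\beta]\in E_1^{0,n-1}(X)$. The chain is then built inductively using the identities
\[
\partial\tau^{3n-1+k}=\tau^{1+k}\wedge\tau^{n+1+k}, \qquad \bar\partial\tau^{3n-1+k}=\tau^{n+k}\wedge\overline{\tau^{2n+k}}, \qquad k=0,\dots,n-2,
\]
together with $\partial\overline{\tau^{4n-2}}=-\tau^n\wedge\overline{\tau^{2n}}$. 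Starting from $\partial\beta=\pm\,\tau^n\wedge\overline{\tau^{2n}}\wedge\overline{\tau^{2n+1}}\wedge\dots\wedge\overline{\tau^{3n-2}}$, each successive step uses the $\bar\partial$-identity above to cancel the previous $\partial\alpha_{k-1,n-k}$, while the $\partial$-identity applied to the newly introduced factor $\tau^{3n-2+k}$ produces a $\tau^{k-1}\wedge\tau^{n+k-1}$ that accumulates one more holomorphic factor at the front. The inductive shape is
\[
\alpha_{k,n-1-k}=\pm\,\tau^1\wedge\dots\wedge\tau^{k-1}\wedge\tau^{3n-2+k}\wedge\overline{\tau^{2n+k}}\wedge\dots\wedge\overline{\tau^{3n-2}},\qquad k=1,\dots,n-1,
\]
with empty antiholomorphic block at $k=n-1$. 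Thus $\alpha_{n-1,0}=\pm\,\tau^1\wedge\dots\wedge\tau^{n-2}\wedge\tau^{4n-3}$, and \eqref{dr} together with $\partial\tau^{4n-3}=\tau^{n-1}\wedge\tau^{2n-1}$ gives $d_n([\beta])=\pm\,[\tau^1\wedge\dots\wedge\tau^{n-1}\wedge\tau^{2n-1}]$.

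The main obstacle is showing $[\gamma]\neq 0$ in $E_n^{n,0}(X)$, where $\gamma=\tau^1\wedge\dots\wedge\tau^{n-1}\wedge\tau^{2n-1}$. Since $J$ is a nilpotent complex structure (the nilmanifold is $2$-step), by \cite{RTW} it suffices to work at the Lie algebra level. The form $\gamma$ is $d$-closed, hence lies in $\mathcal{X}^{n,0}_n(\frg,J)$; notably, $\gamma$ is even $d$-exact as a form, since summing the $\alpha_{k,n-1-k}$ gives $\gamma=d\bigl(\sum_{k=0}^{n-1}\alpha_{n-1-k,k}\bigr)$. The key subtlety is that membership in $\mathcal{Y}^{n,0}_n(\frg,J)$ requires a chain of only $n-1$ forms $\beta_{n-1,0},\dots,\beta_{1,n-2}$ terminating in $\bar\partial\beta_{1,n-2}=0$, with no $\beta_{0,n-1}$ allowed; this is strictly stronger than $d$-exactness. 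To exclude $\gamma\in\mathcal{Y}^{n,0}_n(\frg,J)$ I would run a combinatorial forcing argument: because only $\tau^{4n-3}$ contributes $\tau^{n-1}\wedge\tau^{2n-1}$ under $\partial$, any $\beta_{n-1,0}$ with $\partial\beta_{n-1,0}=\gamma$ must contain the summand $\pm\,\tau^1\wedge\dots\wedge\tau^{n-2}\wedge\tau^{4n-3}$; then $\bar\partial$ of that summand forces a specific term in $\partial\beta_{n-2,1}$, which forces a specific summand of $\beta_{n-2,1}$, and iterating down the chain mirrors in reverse the construction of the $\alpha_{k,n-1-k}$. The forcing culminates in a required summand of $\beta_{1,n-2}$ whose $\bar\partial$ cannot be made to vanish, producing the contradiction. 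Carefully ruling out cancellations from auxiliary summands in each $\beta_{k,n-1-k}$ is the technical heart of the proof.
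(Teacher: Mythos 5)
The paper itself offers no proof of this statement: it is quoted from \cite[Lemma 2]{BR}, so the comparison here is against the original Bigalke--Rollenske argument, which your plan essentially reproduces. Your zig-zag is correct and complete: $\bar\partial\beta=0$ because $d\overline{\tau^{4n-2}}=\overline{\tau^{2n}}\wedge\tau^{n}$ has pure bidegree $(1,1)$; the forms $\alpha_{k,n-1-k}=\pm\,\tau^1\wedge\dots\wedge\tau^{k-1}\wedge\tau^{3n-2+k}\wedge\overline{\tau^{2n+k}}\wedge\dots\wedge\overline{\tau^{3n-2}}$ do satisfy $\partial\alpha_{k-1,n-k}+\bar\partial\alpha_{k,n-1-k}=0$ because $\partial\tau^{3n-2+k}=\tau^{k}\wedge\tau^{n+k}$ while $\bar\partial\tau^{3n-1+k}=\tau^{n+k}\wedge\overline{\tau^{2n+k}}$; and $\partial\alpha_{n-1,0}=\pm\,\tau^1\wedge\dots\wedge\tau^{n-1}\wedge\tau^{2n-1}$, giving $d_n([\beta])=\pm[\gamma]$ (the sign ambiguity is harmless). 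Your reduction to the invariant level is also legitimate, either via \cite{RTW} since $J$ is nilpotent, or via the symmetrization injection from the proof of Proposition~\ref{NLA-a-nilvariedad}. Your observation that $\gamma$ is $d$-exact but that membership in $\mathcal{Y}^{n,0}_n$ requires a chain of length $n-1$ ending in $\bar\partial\beta_{1,n-2}=0$ (no $\beta_{0,n-1}$ permitted) correctly isolates where the content lies.

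The genuine gap is the step you yourself defer: the proof that $\gamma\notin\mathcal{Y}^{n,0}_n$, which \emph{is} the lemma, is only sketched. The forcing strategy is the right one and does close, but the closure depends on uniqueness facts that a complete proof must verify and that you leave unexamined. Concretely: $\tau^{n+k}$ occurs in $\partial\tau^{3n-1+k'}=\tau^{1+k'}\wedge\tau^{n+1+k'}$ only for $k'=k-1$ (the alternative $1+k'=n+k$ forces $k'$ out of the range $0\le k'\le n-2$), and $\overline{\tau^{2n+k}}$ occurs in $\bar\partial\tau^{3n-1+k'}$ only for $k'=k$ and in $\bar\partial\overline{\tau^{3n-1+k'}}=\overline{\tau^{1+k'}}\wedge\overline{\tau^{n+1+k'}}$ for no admissible $k'$; one must also rule out contributions from $\partial\overline{\tau^{3n-1+k'}}=\overline{\tau^{n+k'}}\wedge\tau^{2n+k'}$ and from $\overline{\tau^{4n-2}}$ at every stage. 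These checks show the forced summand of $\beta_{k,n-1-k}$ is exactly the monomial $\alpha_{k,n-1-k}$ up to sign, and the terminal contradiction is that the coefficient of $\tau^{n}\wedge\overline{\tau^{2n}}\wedge\overline{\tau^{2n+1}}\wedge\dots\wedge\overline{\tau^{3n-2}}$ in $\bar\partial\beta_{1,n-2}$ is $\pm1$, since only $\bar\partial\tau^{3n-1}$ can produce $\tau^{n}\wedge\overline{\tau^{2n}}$. Without this cancellation analysis the argument is a plan rather than a proof; with it, it coincides with the proof in \cite{BR}.
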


As shown by Sferruzza and Tardini in \cite{ST21}, the nilmanifold $X^{4n-2}$ is balanced. Indeed, many balanced metrics exist on $X^{4n-2}$. For  any $\rho=(\rho_1,\ldots,\rho_{4n-2})\in (\R^+)^{4n-2}$, let $F_{\rho}$ be the ``diagonal'' (with respect to the above basis) invariant Hermitian metric defined as
$$
F_{\rho}=\frac{i}{2} \left(\rho_1 \,\tau^{1}\wedge \overline{\tau^{1}}+\cdots+\rho_{4n-2} \,\tau^{4n-2}\wedge \overline{\tau^{4n-2}} \right).
$$
Hence, 
\begin{equation}\label{partial-BR}
\partial F_{\rho}^{4n-3} 
= \sum_{k=1}^{4n-2} h_k^{\rho}\ \partial\!\left(\!(\tau^{1}\wedge \overline{\tau^{1}}) \wedge\cdots\wedge  (\hat{\tau^{k}} \wedge \hat{\overline{\tau^{k}}})  \wedge\cdots\wedge(\tau^{4n-2}\wedge \overline{\tau^{4n-2}})\!\right),
\end{equation}
where $h_k^{\rho}= i\, \frac{(4n-3)!}{2^{4n-3}}\, \rho_1\cdots \hat{\rho_{k}} \cdots \rho_{4n-2}$. Here, $\hat{\rho_{k}}$ means that we are removing $\rho_{k}$ from the expression, and a similar meaning is given to the notation 
$\hat{\tau^{k}}$ and $\hat{\overline{\tau^{k}}}$.
Using \eqref{ecusBR-2} one has $\partial (\tau^{j}\wedge \overline{\tau^{j}})=0$ for $1\leq j\leq 3n-2$, 
and 
$$
\begin{array}{llll}
\partial (\tau^{3n-1}\wedge \overline{\tau^{3n-1}})=& \tau^1\wedge \tau^{n+1}\wedge \overline{\tau^{3n-1}}&+& \tau^{2n}\wedge \overline{\tau^{n}}\wedge\tau^{3n-1},\\[3pt]
\qquad\qquad \vdots&\qquad\qquad \vdots & & \qquad\quad \vdots \\[3pt]
\partial (\tau^{4n-3}\wedge \overline{\tau^{4n-3}})=& \tau^{n-1}\wedge \tau^{2n-1}\wedge \overline{\tau^{4n-3}} &+& \tau^{3n-2}\wedge \overline{\tau^{2n-2}}\wedge \tau^{4n-3}, \\[5pt]
\partial (\tau^{4n-2}\wedge \overline{\tau^{4n-2}})=& & & \tau^{n}\wedge \overline{\tau^{2n}}\wedge\tau^{4n-2}.
\end{array}
$$
These equalities clearly imply that 
$\xi\wedge \partial (\tau^{j}\wedge \overline{\tau^{j}})=0$ 
for every $\xi \in \bigwedge^{3n-3}\langle \tau^{1}\wedge \overline{\tau^{1}},\ldots,\tau^{3n-2}\wedge \overline{\tau^{3n-2}} \rangle$ and for any $1\leq j\leq 4n-2$,
so \eqref{partial-BR} vanishes. Thus, any diagonal metric $F_{\rho}$ is balanced.

\begin{proposition}\label{BR-balanced}\cite[Theorem 3.3]{ST21}
For every $n\geq 2$, the nilmanifold $X^{4n-2}$ is balanced. 
\end{proposition}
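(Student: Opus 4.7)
The plan is to exhibit an explicit $(4n-2)$-parameter family of invariant balanced metrics on $X^{4n-2}$. For each $\rho=(\rho_1,\ldots,\rho_{4n-2})\in(\R^+)^{4n-2}$, consider the diagonal Hermitian metric
$$
F_\rho=\frac{i}{2}\sum_{k=1}^{4n-2}\rho_k\,\tau^{k}\wedge\overline{\tau^{k}},
$$
and recall that, since $F_\rho$ is real of even degree, the balanced condition $dF_\rho^{4n-3}=0$ is equivalent to $\partial F_\rho^{4n-3}=0$. I will verify the latter by a direct computation.

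The diagonal $(1,1)$-forms $\tau^k\wedge\overline{\tau^k}$ pairwise commute and square to zero, so $F_\rho^{4n-3}$ expands as a positive linear combination of the $4n-2$ wedge products of the form $\bigwedge_{k\neq j}(\tau^k\wedge\overline{\tau^k})$. Applying $\partial$ together with the Leibniz rule, every summand of $\partial F_\rho^{4n-3}$ is, up to a sign and a positive constant, of the shape
$$
\bigwedge_{k\neq j,\ell}(\tau^k\wedge\overline{\tau^k})\ \wedge\ \partial(\tau^{\ell}\wedge\overline{\tau^{\ell}})
$$
for some pair of distinct indices $j\neq\ell$. The structure equations \eqref{ecusBR-2} show that $\tau^{1},\ldots,\tau^{3n-2}$ are closed, so $\partial(\tau^{\ell}\wedge\overline{\tau^{\ell}})=0$ for every $\ell\le 3n-2$, leaving only $\ell\in\{3n-1,\ldots,4n-2\}$ as candidates. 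A direct use of \eqref{ecusBR-2} then expresses each such $\partial(\tau^{\ell}\wedge\overline{\tau^{\ell}})$ as a sum of at most two three-forms, each of which is $\tau^{\ell}$ or $\overline{\tau^{\ell}}$ wedged with two further one-form factors whose indices $p,q$ are distinct elements of $\{1,\ldots,3n-2\}$.

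The conclusion is then a short index check. For such a three-form to survive upon wedging with $\bigwedge_{k\neq j,\ell}(\tau^k\wedge\overline{\tau^k})$, both small indices $p$ and $q$ must belong to $\{j,\ell\}$, since otherwise one of the corresponding $\tau^{p}$, $\overline{\tau^{p}}$, $\tau^{q}$ or $\overline{\tau^{q}}$ factors collides with a factor already present in the product. But $p,q\le 3n-2<\ell$ rules out $p=\ell$ and $q=\ell$, so both $p$ and $q$ would have to equal $j$, contradicting $p\neq q$. Hence every summand of $\partial F_\rho^{4n-3}$ vanishes and $F_\rho$ is balanced. The only slightly delicate step is this final index-matching bookkeeping, but it becomes immediate once the explicit formulas for $\partial(\tau^{\ell}\wedge\overline{\tau^{\ell}})$ are read off from \eqref{ecusBR-2}.
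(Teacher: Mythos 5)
Your proof is correct and follows essentially the same route as the paper: both exhibit the diagonal invariant metrics $F_{\rho}$, reduce the balanced condition to $\partial F_{\rho}^{4n-3}=0$, and kill every Leibniz summand by observing that the one-form factors appearing in $\partial(\tau^{\ell}\wedge\overline{\tau^{\ell}})$ for $\ell\geq 3n-1$ carry indices in $\{1,\dots,3n-2\}$ and therefore collide with the diagonal factors already present in the product. The index-matching bookkeeping at the end matches the paper's key vanishing statement $\xi\wedge\partial(\tau^{j}\wedge\overline{\tau^{j}})=0$.
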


In a recent paper Stelzig proves that the K\"unneth formula is compatible with the Hodge filtration and its conjugate, so
the well-known K\"unneth formula for the Dolbeault cohomology implies a K\"unneth formula for
all higher pages of the Fr\"olicher spectral sequence (see \cite[Section 6]{Stelzig-2021} and the proof of \cite[Proposition 4.8]{Stelzig-2021} for more details).

\begin{proposition}\label{kunneth}\cite{Stelzig-2021}
Let $X$ and $Y$ be compact complex manifolds, and let $Z=X\times Y$. For every $r\geq 1$ and every $(p,q)$ with $0\leq p,q\leq \dim_{\C}Z$, the following equality holds
\vskip-.3cm
\hskip4.5cm$
e_r^{p,q}(Z)=\!\!\!\!{{\begin{array}{cc}& \phantom{-} \\[-4pt]& \sum\\[-8pt]& _{p_1+p_2=p}\\[-10pt] & _{q_1+q_2=q}\end{array}}} e_r^{p_1,q_1}(X) \cdot e_r^{p_2,q_2}(Y),
$
\vskip.2cm
\noindent where $e_r^{p,q}(\cdot)$ denotes the dimension of $E_r^{p,q}(\cdot)$, i.e.  $e_r^{p,q}(\cdot)=\dim E_r^{p,q}(\cdot)$.
\end{proposition}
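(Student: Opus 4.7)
The plan is to realize the Fr\"olicher spectral sequence as the one attached to a filtered complex, namely the de Rham complex $(\Omega^*(M),d)$ of $M$ equipped with the Hodge filtration $F^p\Omega^*(M)=\bigoplus_{p'\geq p}\Omega^{p',\,*-p'}(M)$, and then to deduce the claim from the algebraic K\"unneth formula for tensor products of filtered complexes over a field combined with the classical K\"unneth formula for Dolbeault cohomology. The external product of forms -- wedging pullbacks along the projections $Z\to X$ and $Z\to Y$ -- defines a morphism of filtered double complexes
$$\mu\colon \bigl(\Omega^{*,*}(X),\partial,\db\bigr)\otimes_{\C}\bigl(\Omega^{*,*}(Y),\partial,\db\bigr)\,\longrightarrow\,\bigl(\Omega^{*,*}(Z),\partial,\db\bigr),$$
which is bidegree-preserving; on the source the filtration is the convolution $F^p=\sum_{p_1+p_2=p}F^{p_1}\otimes F^{p_2}$, matching $F_Z$ on pure bidegree pieces.

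First I would settle the purely algebraic side. For two filtered complexes $(A,F)$ and $(B,F)$ over a field, taking associated graded commutes with tensor product, so $E_0(A\otimes B)\cong E_0(A)\otimes E_0(B)$ as complexes. The K\"unneth formula for ordinary chain complexes over a field then yields $E_1(A\otimes B)\cong E_1(A)\otimes E_1(B)$, and an induction on $r$ -- using that the induced differential on a tensor product satisfies the graded Leibniz rule and that K\"unneth for chain complexes iterates -- establishes
$$E_r^{p,q}(A\otimes B)\,\cong\!\!\bigoplus_{\substack{p_1+p_2=p\\ q_1+q_2=q}}\!\! E_r^{p_1,q_1}(A)\otimes E_r^{p_2,q_2}(B)$$
for every $r\geq 1$ and every bidegree $(p,q)$.

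Next I would plug in the geometric input. The map $\mu$ induces the standard K\"unneth isomorphism on Dolbeault cohomology -- that is, on the $E_1$ page -- which can be proved either by Hodge theory using a product Hermitian metric (harmonic $(p,q)$-forms on $Z$ being external products of harmonic forms on $X$ and $Y$), or sheaf-theoretically from the Dolbeault resolution of the structure sheaf together with K\"unneth for coherent sheaf cohomology. A morphism of filtered complexes that is an isomorphism on the $E_1$ page is automatically an isomorphism on $E_r$ for every $r\geq 1$, since $E_{r+1}$ is computed functorially from $E_r$ and $d_r$. Combining this with the algebraic K\"unneth of the previous step gives the asserted isomorphism of FSS pages, and taking complex dimensions yields the stated identity.

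The main obstacle is making the passage from the algebraic tensor product to the bicomplex of smooth forms on $Z$ rigorous: strictly speaking $\Omega^{*,*}(X)\otimes_\C\Omega^{*,*}(Y)$ is only a dense subcomplex of $\Omega^{*,*}(Z)$, so $\mu$ is injective but not surjective. One either invokes the Dolbeault K\"unneth directly as a black box at the $E_1$ level -- the route I would take -- or, following Stelzig, works within the framework of bounded bicomplexes with finite-dimensional total cohomology, where the decomposition into zigzag and square indecomposables is compatible with tensor products and simultaneously yields K\"unneth formulas for Dolbeault, Bott--Chern, and Aeppli cohomologies as well as for all pages of the FSS.
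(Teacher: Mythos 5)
Your argument is correct and is essentially the route the paper takes via its citation of Stelzig: the external product is a filtered (bidegree-preserving) morphism that induces the Dolbeault K\"unneth isomorphism on $E_1$, the algebraic K\"unneth for tensor products of filtered complexes over a field (which rests on the standard pairing of spectral sequences and the Leibniz rule for $d_r$) identifies $E_r$ of the tensor product with $\bigoplus E_r^{p_1,q_1}(X)\otimes E_r^{p_2,q_2}(Y)$, and the comparison theorem upgrades the $E_1$-isomorphism to an isomorphism on every page $r\geq 1$. You also correctly isolate and dispose of the only delicate point, namely that $\Omega^{*,*}(X)\otimes_{\C}\Omega^{*,*}(Y)$ is only a proper subcomplex of $\Omega^{*,*}(Z)$, which is exactly why the argument must pass through the $E_1$-isomorphism rather than an isomorphism of complexes.
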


We will apply this formula together with the results above 
to prove the following 

\begin{theorem}\label{th-inf-balanced}
For every integer $n\geq 3$, 
there exist infinitely many compact balanced 
manifolds~$Z$ with $\dim_{\C}Z=4n+2$ and different complex (hence, real or rational) homotopy types whose Fr\"olicher spectral sequence $\{E_r(Z)\}$ does not degenerate at the $n$-th page. 

Similar results hold for the first and second pages when complex dimensions $7$ and $4$ are respectively considered.
\end{theorem}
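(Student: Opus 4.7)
The plan is to construct the required families as products. For each $n\geq 3$ and each $\alpha\in\Q$ with $\alpha\geq 0$, set
$$Z_\alpha^{(n)} \ := \ X^{4n-2}\times Y^{4}_\alpha,$$
where $X^{4n-2}$ is the Bigalke--Rollenske nilmanifold (Propositions~\ref{BR-dn},~\ref{BR-balanced}) and $Y^{4}_\alpha$ is the family of balanced nilmanifolds with $E_2\neq E_3$ produced in the proof of Theorem~\ref{cor-infinite-bal-FSS}. The complex dimension of $Z_\alpha^{(n)}$ is $(4n-2)+4=4n+2$, as required. For the two border cases the same product construction applies: replacing $X^{4n-2}$ by the Iwasawa manifold (complex dimension $3$, with $E_1\neq E_2$, hence $d_1\neq 0$) realizes complex dimension $7$ with non-degeneration at the first page, while complex dimension $4$ with $d_2\neq 0$ is already Theorem~\ref{cor-infinite-bal-FSS}.

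Balancedness of the product is a short computation: with $F=F_X+F_Y$ where each $F_i$ is balanced on its factor, the binomial expansion of $F^{n_1+n_2-1}$ retains only the two surviving terms proportional to $F_X^{n_1-1}\wedge F_Y^{n_2}$ and $F_X^{n_1}\wedge F_Y^{n_2-1}$, each of which is closed thanks to the balanced condition on one factor and the fact that the other factor contributes a top form. For non-degeneration at the $n$-th page of $Z_\alpha^{(n)}$, the K\"unneth isomorphism of Proposition~\ref{kunneth}, together with its Leibniz-compatible extension to the differentials $d_r$ within Stelzig's framework, reduces the question to a factorwise calculation. Take the class $[\beta]\in E_n^{0,n-1}(X^{4n-2})$ from Proposition~\ref{BR-dn} and the unit class $[1]\in E_n^{0,0}(Y^{4}_\alpha)$. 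Since $d_n[1]$ lands in $E_n^{n,-n+1}(Y^{4}_\alpha)=0$ for degree reasons, the Leibniz rule yields $d_n([\beta]\otimes[1]) = d_n[\beta]\otimes[1] \neq 0$ in $E_n^{n,0}(Z_\alpha^{(n)})$, giving the desired non-degeneration. The dimension $7$ case is treated identically, with $[\beta]$ replaced by a class witnessing $d_1\neq 0$ on the Iwasawa factor.

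The remaining and, I expect, most delicate point is to show that distinct parameters $\alpha\neq\alpha'$ yield distinct complex homotopy types for the products. Since the complex minimal model of a product of nilmanifolds is the tensor product of the complex minimal models of the factors (equivalently, the Chevalley--Eilenberg complex of the complexified direct sum of the underlying nilpotent Lie algebras), what must be shown is that an isomorphism of $\C$-minimal models of $Z_\alpha^{(n)}$ and $Z_{\alpha'}^{(n)}$ forces an isomorphism of those of $Y^{4}_\alpha$ and $Y^{4}_{\alpha'}$, contradicting Theorem~\ref{cor-infinite-bal-FSS}. The natural route is a Krull--Schmidt-type cancellation for indecomposable summands of the complexified nilpotent Lie algebra underlying the product. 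Alternatively, one can avoid cancellation by extracting directly a parameter-sensitive invariant from the structure constants of $\mathfrak{g}_{BR}\oplus\mathfrak{g}_{11}^{\alpha,1}$ in a suitable normalized basis, in the spirit of~\cite[Lemmas 3.4--3.6]{LUV2} used for Theorem~\ref{cor-infinite-bal-FSS}. Carrying out one of these two routes rigorously is where the main technical work lies.
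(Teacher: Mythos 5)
Your construction is essentially the paper's: the same products $X^{4n-2}\times Y^4_\alpha$ (and Iwasawa${}\times Y^4_\alpha$ for dimension $7$), balancedness of products, K\"unneth, and distinction of homotopy types via the direct-sum decomposition of the underlying nilpotent Lie algebras. Two remarks on where your execution diverges from, or falls short of, the paper's. First, for the non-degeneration you invoke a Leibniz rule for $d_r$ on the K\"unneth decomposition; Proposition~\ref{kunneth} as stated only gives the dimensions $e_r^{p,q}$, and while the multiplicative refinement is available in Stelzig's framework, it is cleaner (and all that is needed) to argue purely numerically as the paper does: with $(p,q)=(0,n-1)$ and $r\in\{n,n+1\}$, the term $e_r^{0,n-1}(X^{4n-2})\cdot e_r^{0,0}(Y^4_\alpha)$ drops strictly from $r=n$ to $r=n+1$ by Proposition~\ref{BR-dn}, while every other summand is non-increasing by the Fr\"olicher inequalities $e_r^{p,q}\geq e_{r+1}^{p,q}$; hence $e_n^{0,n-1}(Z)>e_{n+1}^{0,n-1}(Z)$. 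Second, the cancellation step you flag as the main remaining work is closed in the paper by \cite[Lemma 1]{Seeley}: a finite-dimensional complex Lie algebra has a decomposition into indecomposable ideals that is unique up to isomorphism of the summands, so $\frh_{_\C}\oplus(\frg_{11}^{\alpha,1})_{_\C}\cong\frh_{_\C}\oplus(\frg_{11}^{\alpha',1})_{_\C}$ forces $(\frg_{11}^{\alpha,1})_{_\C}\cong(\frg_{11}^{\alpha',1})_{_\C}$, contradicting the pairwise non-isomorphism established for Theorem~\ref{cor-infinite-bal-FSS}. This is exactly the Krull--Schmidt route you anticipated; with that citation your argument is complete.
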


\begin{proof} 
First, we prove the theorem for each $n\geq 3$. Let $X^{4n-2}$ be the nilmanifold in the Bigalke-Rollenske family, which is balanced by Proposition~\ref{BR-dn}. Let us also consider any of the complex balanced manifolds $Y^4_{\alpha}$ in the  infinite family given in Theorem~\ref{cor-infinite-bal-FSS}. 
Then, by \cite[Proposition 1.9]{Mi} the compact complex manifold $Z^{4n+2}_{\alpha}=X^{4n-2}\times Y^4_{\alpha}$ is balanced too. 

Now observe that both $X^{4n-2}$ and $Y^4_{\alpha}$ are nilmanifolds, and
recall that $Y^4_{\alpha}$ is constructed from a nilpotent Lie algebra $\frg_{11}^{\alpha,1}$, with $\alpha\geq 0$ and $\alpha\in\Q$. By \cite{Hasegawa} the ($\Q$-)minimal model of $Z^{4n+2}_{\alpha}$ is isomorphic to the commutative differential graded algebra (CDGA) defined by the exterior algebra of the rational Lie algebra 
$\frh\oplus \frg_{11}^{\alpha,1}$, where $\frh$ denotes the nilpotent Lie algebra underlying the Bigalke-Rollenske nilmanifold $X^{4n-2}$. Hence, for every non-negative $\alpha,\,\alpha'\in\mathbb Q$ with $\alpha\neq \alpha'$,
the minimal models of 
$Z^{4n+2}_{\alpha}$ and $Z^{4n+2}_{\alpha'}$ are isomorphic to 
$\frg^{\alpha}=\frh\oplus \frg_{11}^{\alpha,1}$ and $\frg^{\alpha'}=\frh\oplus \frg_{11}^{\alpha',1}$, respectively. Let us denote their complexifications by $\frg^{\alpha}_{_{\C}}$ and $\frg^{\alpha'}_{_{\C}}$. By \cite[Lemma 1]{Seeley}, if a finite-dimensional Lie algebra over $\C$ is decomposed as a direct sum of indecomposable ideals, then the isomorphism classes of these ideals are unique.
%
Hence, if we assume that $\frh_{_{\C}}\oplus (\frg_{11}^{\alpha,1})_{_{\C}}=\frg^{\alpha}_{_{\C}} \cong \frg^{\alpha'}_{_{\C}}=\frh_{_{\C}}\oplus (\frg_{11}^{\alpha',1})_{_{\C}}$, 
then $(\frg_{11}^{\alpha,1})_{_{\C}} \cong (\frg_{11}^{\alpha',1})_{_{\C}}$ and the 
nilpotent Lie algebras $\frg_{11}^{\alpha,1}$ and $\frg_{11}^{\alpha',1}$ would be isomorphic over $\C$. However, this is a contradiction (see 
the proof of Theorem~\ref{cor-infinite-bal-FSS}).
Consequently, the (nil)manifolds $Z^{4n+2}_{\alpha}$ and $Z^{4n+2}_{\alpha'}$  have different complex homotopy type for $\alpha\neq \alpha'$, thus also different real or rational homotopy type.

Let us now apply the K\"unneth formula in Proposition~\ref{kunneth} to $Z^{4n+2}_{\alpha}=X^{4n-2}\times Y^4_{\alpha}$ with $p=0$, $q=n-1$ and $r\in\{n,n+1\}$.  Then, we get:
$$
\begin{array}{rl}
e_n^{0,n-1}(Z^{4n+2}_{\alpha})
=\!\!&\!\! e_n^{0,n-1}(X^{4n-2}) \cdot e_n^{0,0}(Y^4_{\alpha})+\sum_{l=1}^{4} e_n^{0,n-1-l}(X^{4n-2}) \cdot e_n^{0,l}(Y^4_{\alpha})\\
>\!\!&\!\! e_{n+1}^{0,n-1}(X^{4n-2}) \cdot e_{n+1}^{0,0}(Y^4_{\alpha})+ \sum_{l=1}^{4} e_{n+1}^{0,n-1-l}(X^{4n-2}) \cdot e_{n+1}^{0,l}(Y^4_{\alpha})\ = e_{n+1}^{0,n-1}(Z^{4n+2}_{\alpha}). 
\end{array}
$$
Here we have used that $e_{n}^{0,n-1}(X^{4n-2})>e_{n+1}^{0,n-1}(X^{4n-2})$ by Proposition~\ref{BR-dn}, 
together with the well-known Fr\"olicher inequalities $e_r^{p,q}(\cdot)\geq e_{r+1}^{p,q}(\cdot)$, valid for every compact complex manifold and any~$r,p,q$.
In conclusion, for every integer $n\geq 2$, the infinite family of compact balanced manifolds $Z^{4n+2}_{\alpha}$ have differential $d_n\not=0$. 

\smallskip

The result for the second page comes directly from Theorem~\ref{cor-infinite-bal-FSS}. Therefore, we next focus on the non-degeneration at the first page. Note that we can no longer make use of the Bigalke-Rollenske nilmanifolds since they satisfy $d_n\not=0$ with $n\geq 2$.
Moreover, we do not know if the nilmanifolds $Y^4_{\alpha}$ have non-zero differential $d_1$. 
So, we will proceed as follows.


Let $X^3$ be any compact balanced nilmanifold with Fr\"olicher spectral sequence not degenerating at the first page. We can consider, for instance, the Iwasawa manifold or any of those studied in~\cite{COUV}. Let $Z^7_{\alpha}=X^3\times Y^4_{\alpha}$, with $\alpha\geq 0$ and $\alpha\in\Q$. A similar argument as above 
allows us to conclude that $Z^{7}_{\alpha}$ and $Z^{7}_{\alpha'}$ have different complex homotopy type whenever $\alpha\neq \alpha'$, they are balanced and their 
Fr\"olicher spectral sequence $\{E_r(Z^{7}_{\alpha})\}_{r\geq 1}$ is not degenerating at the $1$-st page (neither at the $2$-nd page). 
\end{proof} 


\smallskip

It is worthy to remark that, in the result concerning the second page,  
the dimension of $Z$ is optimal in the class of complex nilmanifolds;  
indeed, 
Bazzoni and Mu\~noz prove in~\cite{BM2012} that the number of real homotopy types of $6$-dimensional nilmanifolds is finite.

However, for other pages 
the dimension of the balanced manifolds $Z$ in the infinite families seems to be far from being optimal, even in the class of complex nilmanifolds. 
For instance, for the third page 
we can consider the (3-step) nilmanifold of real dimension 12 endowed with a nilpotent complex structure given in \cite{CFG-illinois}. 
Let us denote by $X^6$ this complex nilmanifold, whose complex structure equations are 
$$
d\omega^1=d\omega^2=d\omega^3=0, \quad
d\omega^4=\omega^{12}+\omega^{1\bar{2}}, \quad
d\omega^5=-\omega^{2\bar{1}}, \quad
d\omega^6=\omega^{14}+\omega^{1\bar{3}}.
$$
It is proved in \cite{CFG-illinois} that $E_3(X^6)\not\cong E_4(X^6)$. 
For  any $\rho=(\rho_1,\ldots,\rho_{6})\in (\R^+)^{6}$, one can check that the ``diagonal'' invariant Hermitian metrics on $X^6$ defined by 
$F_{\rho}=\frac{i}{2} \sum_{k=1}^6 \rho_k \,\omega^{k\bar{k}}$ are balanced. 
Now, a similar argument as the one given in the proof of Theorem~\ref{th-inf-balanced} 
shows that the infinite family of balanced manifolds $Z^{10}_{\alpha}=X^6\times Y^{4}_{\alpha}$ satisfies the required properties for $n=3$ in complex dimension 10.

\smallskip

Finally, it is still unclear whether there are obstructions in the Fr\"olicher spectral sequence under the existence of a balanced metric on a compact complex manifold. We recall that in the class of balanced nilmanifolds of complex dimension 3 the Fr\"olicher spectral sequence always degenerates at the second page~\cite{COUV}. We believe that the following general question could have a positive answer:

\smallskip

\noindent {\bf Question.} Let $X$ be a compact balanced manifold with $\dim_{\C} X=n$. 
Does the Fr\"olicher spectral sequence of $X$ degenerate at a $k$-th page for some $k\leq n-1$?

\section{Fr\"olicher spectral sequence of SKT and generalized Gauduchon manifolds 
}\label{counterexample-section}

\noindent In this section we construct compact complex manifolds endowed with SKT 
and, more generally, generalized Gauduchon metrics having non-degenerate Fr\"olicher spectral sequence. 

Let us first recall the definition of generalized Gauduchon metrics, introduced and studied by Fu, Wang and Wu in \cite{FWW}.

\begin{definition} \cite{FWW}
Let $X$ be a compact complex manifold with $\dim_{\C}X=n$, and let 
$1 \leq k \leq n -1$ be an integer.
A Hermitian metric $F$ on $X$ is called {\em $k$-th Gauduchon} if it satisfies the condition
\begin{equation*}
\partial \db F^k  \wedge F^{n -k-1} =0.
\end{equation*}
\end{definition}

From the definition, it is clear that the value $k = n-1$ corresponds to the classical \emph{standard} (also known as \emph{Gauduchon}) metrics \cite{Gau}. Moreover, observe that any SKT metric ($\partial \db F=0$) is in particular $1$-st Gauduchon.

We recall that by \cite[Theorem 2.5]{FU}, 
any conformally balanced $1$-st Gauduchon metric $F$ on a compact
complex manifold $X$ with $\dim_{\C}X=n\geq 3$ 
whose Bismut connection has (restricted)
holonomy contained in SU($n$) is necessarily K\"ahler.
In \cite{IP} Ivanov and Papadopoulos
extend this result to any generalized $k$-th Gauduchon metric, for any $k \neq n-1$.

Very little is known about the relation between the existence of $k$-th Gauduchon metrics, for some $1 \leq k \leq n -2$, on a compact complex manifold $X$ and the degeneration of its FSS. A first consequence of the results in the preceding sections is the following one:

\begin{proposition}\label{cor-infinite-1-G-FSS}
There are infinitely many complex 
homotopy types of  compact $1$-st Gauduchon manifolds $Z$ with $\dim_{\C}Z=4$ and Fr\"olicher spectral sequence not degenerating at the second page.
\end{proposition}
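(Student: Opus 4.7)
The strategy is to equip the balanced nilmanifolds $\{Y^4_\alpha\}_{\alpha\in\Q_{\geq 0}}$ of Theorem~\ref{cor-infinite-bal-FSS} with $1$-st Gauduchon Hermitian metrics. Since the $Y^4_\alpha$ have pairwise distinct complex homotopy types by Theorem~\ref{cor-infinite-bal-FSS} and Fr\"olicher spectral sequence satisfying $E_2\neq E_\infty$ by Theorem~\ref{FSS-on-Xs}, the only missing ingredient is the existence of an invariant $1$-st Gauduchon metric on each member of the family. By the usual symmetrization argument on nilmanifolds, it suffices to find such a metric at the Lie-algebra level on $(\frg_{11}^{\alpha,1},J)$, where $J$ is the Family~II complex structure determined by the tuple $(\varepsilon,\mu,\nu,a,b)=(1,1,0,a,b)$ with $\tfrac{2\sqrt{3}|a|}{|b|}=\alpha$.

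For an invariant Hermitian form $F$ written as in~\eqref{formaFund}, the $1$-st Gauduchon condition in complex dimension $4$ amounts to the single top-degree equation $\partial\db F\wedge F^2=0$. The plan is to expand this $(4,4)$-form using the complex structure equations~\eqref{ecus-II}, extracting a polynomial relation in the coefficients $\{x_{k\bar k},\,x_{k\bar\ell}\}$. To produce a concrete solution I would first test the diagonal ansatz $F_0=\tfrac{i}{2}\sum_k \rho_k\,\omega^{k\bar k}$; if this proves insufficient, I would enlarge the ansatz by switching on a few off-diagonal entries modelled on the balanced metric from the proof of Proposition~\ref{prop-bal-fam1-fam2}(ii) (concretely $x_{1\bar 3}$ and $x_{2\bar 4}$), and solve the resulting small algebraic system. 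Positivity of the Hermitian matrix~\eqref{matriz} is then enforced by choosing the diagonal entries $x_{k\bar k}$ sufficiently large, exactly as in the proof of Proposition~\ref{prop-bal-fam1-fam2}.

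Once an invariant $1$-st Gauduchon metric is produced for every admissible pair $(a,b)$, the nilmanifold $Y^4_\alpha$ inherits a $1$-st Gauduchon Hermitian structure for each $\alpha\in\Q_{\geq 0}$, and the proposition follows immediately from Theorem~\ref{cor-infinite-bal-FSS}. The crux is the existence step: in complex dimension $4$ the balanced condition does not entail the $1$-st Gauduchon one, since applying $\db$ to $\partial F\wedge F^2=0$ yields, for any balanced $F$, the identity $\partial\db F\wedge F^2=2\,\partial F\wedge F\wedge\db F$, whose right-hand side need not vanish. Hence the $1$-st Gauduchon metric cannot be obtained formally from Proposition~\ref{prop-bal-fam1-fam2}(ii), and the direct computation outlined above is essential; this is where I expect the main technical work to lie.
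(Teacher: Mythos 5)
Your overall architecture matches the paper's: reduce to the family $Y^4_{\alpha}$ from Theorem~\ref{cor-infinite-bal-FSS} (which already carries the pairwise distinct complex homotopy types and the non-degeneration $E_2\neq E_3$) and then supply a $1$-st Gauduchon metric on each member. You are also right, and it is worth stressing, that the balanced metrics of Proposition~\ref{prop-bal-fam1-fam2} cannot do double duty here: for an invariant balanced metric on a non-K\"ahler complex nilmanifold one even has $c_1(F)>0$ in \eqref{formula-constante} (this is the Ivanov--Papadopoulos fact \cite[Lemma 3.7]{IP} used later in the proof of Theorem~\ref{th-inf-k-G}), so a balanced invariant metric is never $1$-st Gauduchon in this setting.

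The gap is that the one step carrying all the content --- the existence of an invariant Hermitian $F$ on $(\frg_{11}^{\alpha,1},J)$ with $\partial\db F\wedge F^2=0$ --- is left as a plan (``I would first test the diagonal ansatz\dots if this proves insufficient, I would enlarge\dots''), not a proof. This existence is not automatic: the analogous question in complex dimension $3$ has a nontrivial answer (only some complex nilmanifolds admit invariant metrics with $c_1\leq 0$, cf.\ \cite[Proposition 2.9]{LUV-dga}), so one cannot assume the algebraic system you describe has a positive-definite solution without exhibiting one. The paper short-circuits exactly this point by citing \cite[Theorem 5.2]{LUV2}, where the generalized Gauduchon metrics on these nilmanifolds are constructed. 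If you want to avoid the citation, the cleanest route is not to solve $c_1(F)=0$ directly but to use that $F\mapsto c_1(F)$ is continuous on the convex (hence connected) cone of invariant Hermitian metrics: since the balanced metric gives $c_1>0$, it suffices to exhibit a single invariant metric with $c_1\leq 0$ and invoke the intermediate value theorem. Either way, a concrete metric (or sign computation) must actually be produced; as written, the proposal asserts the conclusion of the hard step rather than establishing it.
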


\begin{proof}
It is enough to consider the nilmanifolds $Y^4_{\alpha}$ with underlying Lie algebra isomorphic to $\frg_{11}^{q,1}$ with $q\geq 0$ and $q\in \Q$, given in the proof of Theorem~\ref{cor-infinite-bal-FSS}. In fact, these nilmanifolds admit generalized Gauduchon metrics by \cite[Theorem 5.2]{LUV2}.
\end{proof}

Next, we extend this result to obtain non-degeneration at an arbitrarily large page. 
Let $X$ be a complex nilmanifold with $\dim_{\C}X =n\geq 2$. 
For any invariant Hermitian metric $F$ on $X$,
the real $(n,n)$-form
$\frac{i}{2}\,\partial\bar{\partial}F\wedge F^{n-2}$ is proportional to the volume form $F^n$.
Therefore,
\begin{equation}\label{formula-constante}
\frac{i}{2}\,\partial\bar{\partial}F\wedge F^{n-2}= c_1(F)\, F^n,
\end{equation}
for some constant $c_1(F) \in \mathbb{R}$. By \cite[Proposition 11]{FWW} the sign of the constant $c_1(F)$ is an invariant of the conformal class of $F$. Observe that $F$ is $1$-st Gauduchon if and only if~$c_1(F)=0$. Note that when $n=2$ the constant $c_1(F)=0$ because any invariant metric is standard. 

\begin{proposition}\label{product}\cite[Proposition 2.3 and Corollary 2.4]{LUV-dga}
Let $X$ and $X'$ be complex nilmanifolds 
endowed with invariant Hermitian metrics $F$ and $F'$, respectively.
\begin{enumerate}
\item[(i)] For any real positive number $\lambda>0$, we have
$c_1(\lambda\, F)=\lambda^{-1} c_1(F)$.
%
\item[(ii)] The product Hermitian metric $F + F'$ on the nilmanifold $X \times X'$ satisfies
$$c_1(F + F')={n\,(n-1) \over (n+n')(n+n'-1)}\, c_1(F)+ {n'(n'-1) \over (n+n')(n+n'-1)}\, c_1(F'),$$
where $n=\dim_\mathbb{C} X$ and $n'=\dim_\mathbb{C} X'$.
In particular, if $c_1(F)>0$ and $c_1(F')<0$, 
then $X \times X'$ has a $1$-st Gauduchon metric.
\end{enumerate}
\end{proposition}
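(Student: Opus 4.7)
The plan is to establish (i) and (ii) by direct computations starting from the defining identity \eqref{formula-constante}, and then deduce the final assertion from them by rescaling the first factor.

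For (i), substitute $\lambda F$ into \eqref{formula-constante}. The left-hand side scales by $\lambda^{n-1}$ (one factor from $\partial\db F$ and $n-2$ from $F^{n-2}$), whereas $(\lambda F)^{n}$ scales by $\lambda^{n}$, so $c_1(\lambda F)=\lambda^{-1}c_1(F)$.

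For (ii), set $N=n+n'$. Since $F$ and $F'$ are pulled back from the two factors, $\partial\db(F+F')=\partial\db F+\partial\db F'$, with $\partial\db F$ of bidegree $(2,2)$ on $X$ and trivial on $X'$, and symmetrically for $F'$. Expanding $(F+F')^{N-2}$ with the binomial theorem, bidegree constraints on each factor force exactly one surviving term in each of the two contributions: the $F$-piece survives only at $k=n-2$, and the $F'$-piece only at $k=n$. Likewise $(F+F')^{N}=\binom{N}{n}F^{n}\wedge(F')^{n'}$. Applying \eqref{formula-constante} separately on $X$ and on $X'$ to replace $\tfrac{i}{2}\partial\db F\wedge F^{n-2}$ by $c_1(F)F^{n}$ and $\tfrac{i}{2}\partial\db F'\wedge(F')^{n'-2}$ by $c_1(F')(F')^{n'}$, I obtain a formula of the shape $c_1(F+F')=\alpha\,c_1(F)+\beta\,c_1(F')$ with $\alpha=\binom{N-2}{n-2}/\binom{N}{n}$ and $\beta=\binom{N-2}{n'-2}/\binom{N}{n}$; a routine simplification identifies $\alpha=\tfrac{n(n-1)}{N(N-1)}$ and $\beta=\tfrac{n'(n'-1)}{N(N-1)}$.

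For the last assertion, assume $c_1(F)>0>c_1(F')$. Combining (i) with (ii) applied to $\lambda F$ and $F'$,
\[
c_1(\lambda F+F')=\frac{n(n-1)}{N(N-1)}\,\lambda^{-1}c_1(F)+\frac{n'(n'-1)}{N(N-1)}\,c_1(F'),
\]
which is continuous and strictly decreasing in $\lambda>0$ with limits of opposite sign at $0^{+}$ and $+\infty$; the intermediate value theorem yields a unique $\lambda_{0}>0$ with $c_1(\lambda_{0}F+F')=0$, so $\lambda_{0}F+F'$ is a $1$-st Gauduchon product metric on $X\times X'$. I expect the only non-routine step to be the bidegree bookkeeping in (ii) that isolates the single surviving term in each binomial expansion; everything else is formal manipulation.
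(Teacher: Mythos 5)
Your proof is correct, and since the paper simply imports this proposition from \cite{LUV-dga} without reproducing an argument, your direct verification (scaling for (i), bidegree bookkeeping in the binomial expansion for (ii), and the rescaling-plus-intermediate-value argument for the final claim, which is precisely what part (i) is there for) is the standard one and matches what the cited reference does. The only hypotheses worth making explicit are that $c_1(F)>0$ forces $n\geq 3$ and $c_1(F')<0$ forces $n'\geq 3$ (for $n=2$ the constant vanishes identically), which is what guarantees the coefficients $n(n-1)$ and $n'(n'-1)$ are positive and hence that your limits at $0^+$ and $+\infty$ really have opposite signs.
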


It is worth to recall that for complex nilmanifolds of complex dimension 3, the  existence of an invariant Hermitian metric $F$ with $c_1(F)<0$ implies the existence of a $1$-st Gauduchon metric (possibly non-invariant and non-SKT) on the nilmanifold (see \cite[Theorem 3.6]{FU} for details). In \cite[Proposition 2.9]{LUV-dga} a classification of complex structures admitting Hermitian metrics with $c_1<0$ is given. 
Using such classification together with \cite[Theorem 4.1]{COUV}, one has that the FSS of a complex nilmanifold $X$ of complex dimension 3 degenerates at the second page whenever an invariant Hermitian metric with $c_1\leq 0$ exists on $X$. 

\smallskip

We use the results obtained in Section~\ref{balanced-section} together with the proposition above to extend Proposition~\ref{cor-infinite-1-G-FSS} to arbitrarily high pages.

\begin{theorem}\label{th-inf-k-G}
For every integer $n\geq 3$, 
there exist infinitely many compact $1$-st Gauduchon  
manifolds $W$ with $\dim_{\C}W=4n+5$ and different complex (hence, real or rational) homotopy types whose Fr\"olicher spectral sequence $\{E_r(W)\}$ 
is not degenerating at the $n$-th page.

Similar results hold for the first and second pages when complex dimensions $7$ and $4$ are respectively considered.

Furthermore, all the previous compact complex manifolds are $k$-th Gauduchon for every~$k$.
\end{theorem}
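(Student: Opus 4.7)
The strategy is to lift the balanced constructions of Section~\ref{balanced-section} to the generalized Gauduchon setting by multiplying the families from Theorem~\ref{th-inf-balanced} with a fixed complex three-dimensional nilmanifold and applying Proposition~\ref{product} to adjust the constant $c_1$; the K\"unneth formula (Proposition~\ref{kunneth}) will preserve non-degeneration of the Fr\"olicher spectral sequence at the desired page, and the Hasegawa-plus-Seeley argument of Theorem~\ref{th-inf-balanced} will propagate the infinitely many complex homotopy types.

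Concretely, for each $n\ge 3$ I would set $W^{4n+5}_{\alpha}=Z^{4n+2}_{\alpha}\times N^3$, where $Z^{4n+2}_{\alpha}=X^{4n-2}\times Y^4_{\alpha}$ is the balanced family from Theorem~\ref{th-inf-balanced} (with the Bigalke-Rollenske factor $X^{4n-2}$ supplying $d_n\ne 0$ via Proposition~\ref{BR-dn}) and $N^3$ is a complex three-dimensional nilmanifold carrying an invariant Hermitian metric $F'$ with $c_1(F')<0$, an example of which is provided by the classification in \cite{LUV-dga}. Combining $F'$ with any balanced (hence $c_1=0$) metric $F$ on $Z^{4n+2}_{\alpha}$ and applying Proposition~\ref{product}(i)-(ii), a suitable rescaling yields a Hermitian metric on $W^{4n+5}_{\alpha}$ with total $c_1=0$, i.e., a $1$-st Gauduchon metric. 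Non-degeneration at the $n$-th page follows from Proposition~\ref{kunneth} applied with $(p,q)=(0,n-1)$ and $r\in\{n,n+1\}$: the strict inequality $e_n^{0,n-1}(X^{4n-2})>e_{n+1}^{0,n-1}(X^{4n-2})$ together with the Fr\"olicher inequalities $e_r^{p,q}\ge e_{r+1}^{p,q}$ on the other factors gives $e_n^{0,n-1}(W^{4n+5}_{\alpha})>e_{n+1}^{0,n-1}(W^{4n+5}_{\alpha})$, exactly as in the proof of Theorem~\ref{th-inf-balanced}. Distinct complex homotopy types are derived in the same way: the complex minimal model of $W^{4n+5}_{\alpha}$ is the CDGA of $\frh_{_\C}\oplus(\frg_{11}^{\alpha,1})_{_\C}$ for a fixed complement $\frh$ independent of $\alpha$, and Seeley's uniqueness of indecomposable ideals forces $(\frg_{11}^{\alpha,1})_{_\C}\cong(\frg_{11}^{\alpha',1})_{_\C}$ from any isomorphism between the ambient algebras, contradicting the non-isomorphism already established in Theorem~\ref{cor-infinite-bal-FSS}.

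The first- and second-page analogues in complex dimensions $7$ and $4$ follow by the same scheme with smaller base manifolds: for the first page take $W=X^3\times Y^4_{\alpha}$, with $X^3$ a complex $3$-nilmanifold satisfying $E_1(X^3)\ne E_2(X^3)$ (for instance the Iwasawa manifold or the examples from \cite{COUV}); for the second page take $W=Y^4_{\alpha}$ itself, as in Proposition~\ref{cor-infinite-1-G-FSS}.

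The main obstacle I anticipate is upgrading the $1$-st Gauduchon property to \emph{$k$-th Gauduchon for every} $k$, since the balanced condition alone does not in general imply this stronger statement: the general identity $\partial\bar\partial F^k\wedge F^{n-k-1}=k(k-1)\,F^{n-3}\partial F\wedge\bar\partial F+k\,F^{n-2}\partial\bar\partial F$ shows that one must have both $F^{n-2}\partial\bar\partial F=0$ and $F^{n-3}\partial F\wedge\bar\partial F=0$, which are two independent constraints. The natural way around this is to engineer $N^3$ and the rescaling in Proposition~\ref{product} so that a diagonal invariant metric on $W^{4n+5}_{\alpha}$ forces both vanishings simultaneously; the computation is tedious but tractable since the relevant top-degree forms involve only a small number of structure constants from the nilpotent equations of $Z^{4n+2}_{\alpha}$ and the chosen $N^3$.
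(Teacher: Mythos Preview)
Your overall architecture matches the paper's proof: form $W_\alpha=Z^{4n+2}_\alpha\times X'$ with a three-dimensional nilmanifold factor, use Proposition~\ref{product} to obtain a $1$-st Gauduchon metric, and transport both the non-degeneration (via K\"unneth) and the infinitely many complex homotopy types (via Hasegawa and Seeley) from Theorem~\ref{th-inf-balanced}. There are, however, three points where your argument either slips or takes an unnecessary detour.

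\textbf{The sign of $c_1$ on balanced nilmanifolds.} You assert that a balanced metric $F$ on $Z^{4n+2}_\alpha$ has $c_1(F)=0$; this is false. A balanced metric is $(n\!-\!1)$-st Gauduchon, not $1$-st Gauduchon, and in fact \cite[Lemma~3.7]{IP} gives $c_1(F)>0$ for any invariant balanced non-K\"ahler metric on a nilmanifold. With your claimed value $c_1(F)=0$, Proposition~\ref{product}(ii) would yield $c_1(F+F')<0$ on the product, and no rescaling can repair this since $c_1(\lambda F)=\lambda^{-1}c_1(F)$ preserves sign. The paper's argument uses precisely the correct inequality $c_1(F)>0$ together with $c_1(F')<0$ on $X'$, so that the ``In particular'' clause of Proposition~\ref{product}(ii) applies directly.

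\textbf{The first-page case.} You propose $W=X^3\times Y^4_\alpha$ with $X^3$ the Iwasawa manifold. But the Iwasawa manifold is balanced, hence has $c_1>0$, and $Y^4_\alpha$ is balanced as well; Proposition~\ref{product} then gives $c_1>0$ on the product, which is not $1$-st Gauduchon. The paper instead chooses the three-dimensional factor $X'$ to satisfy \emph{both} $E_1(X')\neq E_2(X')$ \emph{and} $c_1(F')<0$ for some invariant metric $F'$, and exhibits an explicit example with structure equations $d\omega^1=d\omega^2=0$, $d\omega^3=\omega^{1\bar1}+(1+i)\,\omega^{2\bar2}$.

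\textbf{The $k$-th Gauduchon upgrade.} This is not an obstacle at all: once an invariant $1$-st Gauduchon metric exists on a complex nilmanifold of complex dimension at least $4$, \cite[Proposition~2.2]{LU-cr} (or \cite[Proposition~3.1]{OOS}) immediately gives that the same metric is $k$-th Gauduchon for every $1\le k\le n-1$. No engineering of $N^3$ or diagonal metrics is needed.
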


\begin{proof} 
First, we prove the theorem for every $n\geq 3$. For each $\alpha\geq 0$ and $\alpha\in\Q$, let $Z^{4n+2}_{\alpha}$ be a compact balanced manifold in the family given in 
Theorem~\ref{th-inf-balanced}. 
Recall that $Z_{\alpha}$ is a complex nilmanifold and it has a (invariant) balanced metric $F$. 
By \cite[Lemma 3.7]{IP}, one has that the constant $c_1(F) > 0$. 

Let us now consider any complex nilmanifold $X'$ of complex dimension 3 endowed with an invariant Hermitian metric $F'$ with $c_1(F')<0$. Note that these nilmanifolds are classified in \cite[Proposition 2.9]{LUV-dga}, and three is the lowest possible dimension where this can occur.  

Let $W_{\alpha}=Z_{\alpha}\times X'$. It follows from Proposition~\ref{product} that $W_{\alpha}$ has a $1$-st Gauduchon metric. 
Furthermore, a similar argument as in the proof of Theorem~\ref{th-inf-balanced} implies that for any non-negative $\alpha,\alpha'\in\Q$ with $\alpha\not=\alpha'$, the (nil)manifolds $W_{\alpha}=Z_{\alpha}\times X'$ and $W_{\alpha'}=Z_{\alpha'}\times X'$ have different complex homotopy types because their minimal models are not isomorphic (over $\C$). Applying the K\"unneth formula to $W_{\alpha}=Z_{\alpha}\times X'$
we conclude that $d_n\not=0$ for all the manifolds $W_{\alpha}$.
This gives the first part of the statement.

\smallskip

The result for the second page comes directly from Proposition~\ref{cor-infinite-1-G-FSS}, so we focus on the non-degeneration at the first page. 
Let $X'$ be a $3$-dimensional complex nilmanifold with Fr\"olicher spectral sequence not degenerating at the first page and having an invariant Hermitian metric $F$ with $c_1(F)<0$. For example, one can consider that $X'$ is defined by the following complex structure equations
$$
d\omega^1=d\omega^2=0,\quad d\omega^3=\omega^{1\bar{1}}+(1+i)\omega^{2\bar{2}},
$$
and then the Hermitian metric $F=\frac{i}{2}(\omega^{1\bar{1}}+\omega^{2\bar{2}}+\omega^{3\bar{3}})$ satisfies $\frac{i}{2}\,\partial\bar{\partial}F\wedge F= \frac{-1}{12}\, F^3$, for instance.
Let $Z^7_{\alpha}=X'\times Y^4_{\alpha}$, with $\alpha\geq 0$ and $\alpha\in\Q$, where $Y^4_{\alpha}$ is any of the complex balanced manifolds in the  infinite family given in Theorem~\ref{cor-infinite-bal-FSS}.  A similar argument as above 
allows us to conclude the result for the first page in seven complex dimensions.

\smallskip

For the final statement, we just apply \cite[Proposition 2.2]{LU-cr}, where it is proved that if an invariant Hermitian
metric $F$ on a complex nilmanifold $W$ of complex dimension $n\geq 4$
is $k_0$-th Gauduchon for some $k_0$ with $1 \leq k_0 \leq n-2$, 
then $F$ is $k$-th Gauduchon \emph{for every} $k$ with $1 \leq k \leq n-1$. 
Alternatively, one can also apply \cite[Proposition 3.1]{OOS}.
\end{proof} 

Observe that the generalized Gauduchon metrics on the manifolds given in the previous result are
not SKT. Indeed, 
Arroyo and Nicolini prove in \cite[Theorem~1.2]{AN} that any complex nilmanifold admitting an invariant SKT metric is either a torus or 2-step nilpotent. Since any 
nilmanifold with SnN complex structure has nilpotency step $s\geq 3$ (see \cite[Corollary 3.6]{LUV1}), 
in particular the manifolds constructed in Theorems~\ref{th-inf-balanced} and~\ref{th-inf-k-G} do not admit any SKT metric. Note also that the
Bigalke-Rollenske nilmanifolds $X$ are 2-step but they do not admit any SKT metric by \cite[Proposition~3.5]{ST21}. The latter also follows from the fact that the equations \eqref{ecusBR-2} for $X$ imply that the underlying Lie algebra $\frg$ satisfies that $[\frg,\frg]+J[\frg,\frg]$ is abelian and,  
under this condition, Fino and Vezzoni proved 
(see \cite[Theorem~1.1]{FV2} and \cite[Theorem~A]{FV3}) 
that if $X$ carries an SKT metric and a balanced metric, then $X$ is necessarily a complex torus.

In \cite[Conjecture 1.3]{Pop1}, it is conjectured that \emph{any compact complex manifold $X$ admitting an SKT metric has
Fr\"olicher spectral sequence degenerating at the second page}. 
Some evidence for this conjecture is provided in \cite{Pop1,Pop2}, where Popovici obtains sufficient metric conditions for the $E_2$ degeneration
of the Fr\"olicher spectral sequence. The main idea is that the existence of a Hermitian metric on $X$ with \emph{small torsion} (in the sense that we recall below) implies that $E_2(X)=E_{\infty}(X)$.

Let $F$ be any Hermitian metric on a compact complex manifold $X$. Recall that the torsion operator
of order zero and bidegree $(1,0)$ associated with $F$  (see \cite[VII {\S}1]{Dem97}) is given 
by $\tau :=[\Lambda, \partial F\wedge\cdot\,]$, where $\Lambda$ is the formal adjoint of the Lefschetz operator
$L := F \wedge \cdot$ with respect to the $L^2$-inner product $\langle\!\langle\cdot,\cdot\rangle\!\rangle$ induced by $F$ on differential forms.
Define 
$C^{p,q}_F:= \sup_{u\in \Omega^{p,q}(X),\, \|u\|=1} \langle\!\langle
([\tau, \tau^\star] + [\partial F \wedge\cdot, (\partial F \wedge\cdot)^\star])\,u, u \rangle\!\rangle$.
Now, let $\Delta',\Delta''\colon \Omega^{p,q}(X) \longrightarrow \Omega^{p,q}(X)$ be the
usual Laplace-Beltrami operators, i.e. $\Delta'=\partial\partial^\star+\partial^\star\partial$ and $\Delta''=\db\db^\star+\db^\star\db$. The non-negative self-adjoint differential operator $\Delta'+\Delta''$
is elliptic and, since $X$ is compact, it has a
discrete spectrum contained in $[0,+\infty)$ with $+\infty$ as its only accumulation point. Denote by 
$\rho^{p,q}_F:= \min({\rm Spec}(\Delta'+\Delta'')^{p,q} \cap (0,+\infty))$ its smallest positive eigenvalue. 
Thus, $\rho^{p,q}_F$ is the size of the spectral gap of $\Delta'+\Delta''$ acting on $(p, q)$-forms.

Popovici proves in \cite[Theorem 5.4]{Pop1} that if a compact complex $n$-dimensional manifold $X$ carries an SKT metric $F$ whose torsion satisfies the condition
$C^{p,q}_F \leq \frac13 \rho^{p,q}_F$ 
for all $p, q \in \{0,\ldots, n\}$, then the Fr\"olicher spectral sequence of $X$ degenerates at $E_2(X)$.
Furthermore, in \cite[Theorem~1.2]{Pop2} it is proved the degeneration at $E_2$ occurs whenever the manifold admits a Hermitian metric $F$ satisfying $\ker \Delta'' \subset \ker\, [\tau, \tau^\star]$, that is, the torsion operator and its adjoint vanish on $\Delta''$-harmonic forms.

\smallskip

We next provide a counterexample to the previous conjecture, so the torsion of SKT metrics may not be small in general.
The construction is based on the complex geometry of compact Lie groups.\footnote{We were informed by Jonas Stelzig 
of the existence of SKT nilmanifolds with arbitrarily non-degenerate FSS.}

\smallskip


\smallskip

Let $G$  be a connected Lie group with Lie algebra $\frg$, and denote by $\gc$ the complexification of $\frg$. Giving a left-invariant almost complex structure $J$ on $G$ is equivalent to the choice of a subspace $\frs \subset \gc$ such that
$$
\frs \cap \frg =\{0\}, \quad \gc=\frs\oplus\overline{\frs}.
$$
Hence $\frs$ is the subspace of $(1,0)$-elements. 
Now, $J$ is integrable if and only if $\frs$ is a subalgebra of $\gc$, i.e. $[\frs,\frs]\subset \frs$.

For an even-dimensional compact Lie group $G$, Samelson provided in \cite{Samelson} a construction of left-invariant complex structures on $G$ that we briefly recall here. Let $T$ be a maximal torus in $G$ with Lie algebra $\frt$, and suppose that $\alpha_1,\ldots,\alpha_r\in \frt^*$ is a set of positive roots. Here $2r$ is the rank of $G$. Then we have the ${\rm ad}(T)$-invariant decomposition
$$
\gc=\tc\oplus \sum_{j=1}^r\frs_{\alpha_j}\oplus \sum_{j=1}^r\frs_{-\alpha_j},
$$
where $\tc$ is the complexification of $\frt$, and $\frs_{\alpha}$ is given by
$$
\frs_{\alpha}:=\{Z\in \gc \mid [x,Z]=2\pi i \alpha(x)Z \quad\forall x\in\frt\}.
$$
So, if we choose a subspace $\fra\subset \tc$ such that $\fra \cap \frt =\{0\}$ and $\fra\oplus\overline{\fra}=\tc$, then we get a left-invariant complex structure $J$ on $G$ defined by 
$$
\frs=\fra\oplus \sum_{j=1}^r\frs_{\alpha_j}.
$$

Now, let $g$ be a bi-invariant metric on $G$, and denote by $\langle \ ,\ \rangle$ its $\C$-linear extension. The compatibility of the complex structure $J$ with $g$ is equivalent to $\frs$ being isotropic, i.e. $\langle \frs ,\frs \rangle=0$. 
Alexandrov and Ivanov prove in \cite{AI} that any compact Lie group equipped with a  bi-invariant metric $g$ and a left-invariant complex
structure $J$ compatible with $g$ is Bismut flat and its fundamental form $F$ is $dd^c$-harmonic. The latter condition means that $F$ is SKT (since $dd^cF=0$) and standard (since $(dd^c)^*F=0$ is equivalent to $\partial\db F^{n-1}=0$, where $2n=\dim_{\R}G$). This result is used in \cite{AI} to compute the Hodge numbers $h^{0,q}$ of compact Lie groups with such a Hermitian structure. 

Bismut flat manifolds play a relevant role in relation to the pluriclosed flow. In \cite{GJS}, Garcia-Fernandez, Jordan and Streets prove global existence and convergence of the pluriclosed flow and the generalized K\"ahler Ricci flow on compact Bismut flat manifolds. Note that by \cite{WYZ} it turns out that, up to taking universal covers, all such manifolds are given by the above Samelson spaces. 

In what follows, we consider the compact semisimple Lie group SO(9) equipped with the bi-invariant metric $g$ given by minus the Killing form and with a left-invariant complex structure $J$ compatible with $g$ found by Pittie in \cite{Pittie-indian,Pittie-bull}, as we next recall. The Lie group $G={\rm SO(9)}$ has rank four, so we choose a maximal 4-torus $T$ and a basis for its Lie algebra $\frt$ given by
$$
\frt=\langle e_1,e_2,e_3,e_4 \rangle
$$
so that $\{e_k\}_{k=1}^4$ is orthonormal for the metric $g$. 
Consider the subspace $\fra\subset \tc$ defined by
$$
\fra=\langle e_1+e_2+i\sqrt{2}\,e_3\, ,\, e_1-e_2+i\sqrt{2}\,e_4 \rangle.
$$
It is clear that $\fra \cap \frt =\{0\}$ and $\fra\oplus\overline{\fra}=\tc$, then one has a left-invariant complex structure $J$ on $G$. Moreover, $\fra$ is isotropic, so also is $\frs$, and thus $J$ is compatible with the metric $g$.

\begin{proposition}\label{counterex}
The $18$-dimensional compact complex manifold $X=({\rm SO}(9),J)$ satisfies  $E_2(X)\neq E_{\infty}(X)$ and it has an SKT metric, which is in addition $k$-th Gauduchon for every $1 \leq k \leq 17$.
\end{proposition}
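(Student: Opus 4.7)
The plan is to dispose of the SKT and Gauduchon parts first and then concentrate on the Fr\"olicher non-degeneration. For the SKT and Gauduchon claims I would invoke the Alexandrov--Ivanov theorem \cite{AI} recalled just above: because $F$ comes from a bi-invariant metric compatible with $J$, it is $dd^c$-harmonic, equivalently $\partial\db F=0$ (SKT) together with $\partial\db F^{n-1}=0$ (standard), where $n=18$. To bootstrap from these endpoint conditions to ``$k$-th Gauduchon for every $1\leq k\leq 17$,'' the algebraic identity
\[
\partial\db(F^k)\wedge F^{n-k-1}=k(k-1)\,\partial F\wedge\db F\wedge F^{n-3}+k\,F^{n-2}\wedge\partial\db F
\]
does all the work: SKT kills the second summand, and applying the identity at $k=n-1$ together with the standard condition forces $\partial F\wedge\db F\wedge F^{n-3}=0$, after which the first summand vanishes for every $k$ as well. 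Alternatively one quotes \cite[Proposition 2.2]{LU-cr} or \cite[Proposition 3.1]{OOS} directly.

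The substantive content of the proposition is $E_2(X)\neq E_\infty(X)$. Here my first reduction would be to Haar-average. Since $G=\SO(9)$ is compact, integrating left-translates of differential forms on $X$ against Haar measure yields a projection onto left-invariant forms that commutes with $\partial$ and $\db$ and preserves bidegree; this is a quasi-isomorphism of double complexes and therefore induces an isomorphism $E_r(\gc,J)\cong E_r(X)$ on every page $r\geq 1$. (The situation is strictly better than the injective version in Proposition~\ref{NLA-a-nilvariedad}, because we work on $G$ itself rather than on a quotient.) The problem therefore reduces to an algebraic computation on the left-invariant double complex of $(\gc,J)$.

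To carry out that Lie-algebra computation I would follow Pittie's root-theoretic framework \cite{Pittie-indian, Pittie-bull}. Using the Samelson decomposition $\frs=\fra\oplus\bigoplus_{j=1}^{4}\frs_{\alpha_j}$ coming from the root system $B_4$ of $\mathfrak{so}(9)_\C$, together with the isotropic subspace $\fra=\langle e_1+e_2+i\sqrt{2}\,e_3,\,e_1-e_2+i\sqrt{2}\,e_4\rangle$ fixed in the paper, the action of $\partial$ and $\db$ on left-invariant $(p,q)$-forms is completely determined by the Maurer--Cartan equations, and the first page can be read off from Pittie's Dolbeault description. The goal is to exhibit an invariant class $[\alpha]\in E_2^{p,q}(\gc,J)$ on which $d_2$ is non-zero. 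A convenient external sanity check is the dimension comparison with de Rham cohomology: the Poincar\'e polynomial of $\SO(9)$ is $\prod_{i=1}^{4}(1+t^{4i-1})$, so $\dim H_{dR}^{\ast}(X;\C)=2^4=16$, and any explicit lower bound $\sum_{p,q}\dim E_2^{p,q}(X)>16$ already obstructs degeneration at $E_2$. The main obstacle is precisely this explicit root-theoretic bookkeeping in $B_4$ with the chosen $\fra$: enumerating the $16$ positive roots, tracking which root-sum combinations yield $\db$-classes that survive to $E_2$ only to be killed at a later page, and exhibiting a concrete representative on which $d_2$ does not vanish.
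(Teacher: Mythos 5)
Your treatment of the metric statements is fine and coincides with the paper's: Alexandrov--Ivanov gives $dd^c$-harmonicity of $F$, hence $\partial\db F=0$ and $\partial\db F^{n-1}=0$, and your identity
$\partial\db(F^k)\wedge F^{n-k-1}=k(k-1)\,\partial F\wedge\db F\wedge F^{n-3}+k\,F^{n-2}\wedge\partial\db F$
is correct and does bootstrap to all $k$ (the paper instead just cites \cite[Corollary 2.3]{LU-cr} for left-invariant metrics). The problem is with the half of the proposition that carries all the weight, namely $E_2(X)\neq E_\infty(X)$: you never actually produce a class on which $d_2\neq 0$. You reduce to a left-invariant computation and then describe the root-theoretic bookkeeping in $B_4$ as ``the main obstacle,'' i.e.\ the substantive content of the proposition is left unexecuted. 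The paper avoids this $36$-dimensional exterior-algebra computation entirely by working in Pittie's small Dolbeault model $\Lambda(w_{2,1},w_{4,3},w_{6,5},w_{8,7})\otimes\Lambda(v_1,v_2)\otimes\Lambda(u_1,u_2)$ with explicit $\partial$ and $\db$, where one can write down the concrete class $f\xi\eta\in E_2^{6,8}$ (with $f=v_1^2+v_2^2$, $\xi=u_1v_1+u_2v_2$, $\eta=u_1v_1v_2^2$) and check directly that $d_2(f\xi\eta)=w_{4,3}g-w_{6,5}f\neq 0$ in $E_2^{8,7}$. Without either this model or the completed root computation, your argument establishes nothing about $E_2$.

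A second, smaller issue is your reduction step. The claim that Haar averaging is a quasi-isomorphism of double complexes, hence induces isomorphisms $E_r(\gc,J)\cong E_r(X)$ on every page, is asserted without proof; averaging only gives a projection onto the invariant subcomplex commuting with $\partial$ and $\db$, and what follows from that (exactly as in Proposition~\ref{NLA-a-nilvariedad}, which applies verbatim with $\Gamma=\{e\}$) is an \emph{injection} $E_r(\frg,J)\hookrightarrow E_r(X)$ compatible with the differentials $d_r$. That injection is all you need to transfer an invariant non-vanishing $d_2$ to $X$, so you should claim only that; the stronger isomorphism statement for left-invariant (non-parallelizable) complex structures on compact Lie groups requires justification you do not supply. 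Likewise your dimension criterion $\sum_{p,q}\dim E_2^{p,q}(X)>16$ is a valid test but is never verified, so it cannot substitute for the missing computation.
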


\begin{proof}
Let us consider the space
$$
\Lambda V=\Lambda(w_{2,1},w_{4,3},w_{6,5},w_{8,7})\otimes\Lambda(v_1,v_2)\otimes\Lambda(u_1,u_2),
$$
where the generators $w$'s have the bidegree indicated by the subindices, $v_1,v_2$ have bidegree $(1,1)$ and $u_1,u_2$ have bidegree $(0,1)$. Recall that, since the total degree of $v_j$ is even, the space 
$\Lambda(v_1,v_2)$ is a polynomial algebra.

Let us consider $\db$ defined on generators by
$$
\db u_1= \db u_2= \db v_1= \db v_2= \db w_{2,1} =0,\quad 
\db w_{4,3} =f^2,\quad \db w_{6,5} =fg,\quad \db w_{8,7} =f^2,
$$
where $f=v_1^2+v_2^2$ and $g=v_1^2v_2^2$, and let $\partial$ be given by
$$
\partial u_1= v_1,\quad  \partial u_2= v_2, \quad \partial v_1= \partial v_2= \partial w_{2,1} = \partial w_{4,3} =\partial w_{6,5} =\partial w_{8,7} =0.
$$
Pittie proved (see also \cite{Tanre}) that $(\Lambda V,\db)$ is the Dolbeault minimal model of $X$, and used it to show that the FSS of $X$ does not degenerate at $E_2$. 
For the seek of completeness, we here show that  the map
$$
d_2\colon E_2^{6,8}\longrightarrow E_2^{8,7}
$$ 
is not identically zero. Let $\xi:=u_1v_1+u_2v_2$ and $\eta:=u_1v_1v_2^2$. 
We have $\partial\xi=f$, $\partial\eta=g$. The element $f\xi\eta$ has bidegree $(6,8)$, it is clearly $\db$-closed and $\partial (f\xi\eta) = f^2\eta-fg\xi$. 
Since
$$
\partial (f\xi\eta) + \db(w_{6,5}\,\xi-w_{4,3}\,\eta)=0,
$$
by the description \eqref{E=X/Y}-\eqref{dr} we have that $f\xi\eta\in E_2^{6,8}$ and 
$$
d_2(f\xi\eta)=\partial(w_{6,5}\,\xi-w_{4,3}\,\eta)=w_{4,3}\,g-w_{6,5}\,f \in E_2^{8,7}.
$$
Using \eqref{Ypq}, it is a direct calculation to check that $w_{4,3}\,g-w_{6,5}\,f \not\in {\mathcal Y}^{8,7}_2$, so $d_2$ is non-zero and $E_2^{6,8}\neq E_3^{6,8}$.

Alternatively, Tanr\'e showed that $w_{4,3}\,g-w_{6,5}\,f$ is a non-zero Dolbeault-Massey product and, by \cite[Th\'eor\`eme 9]{Tanre}, the degeneration of the FSS at $E_2$ is equivalent to the Dolbeault formality, for every even-dimensional compact connected Lie group $G$ such that $T \rightarrow G \rightarrow G/T$ is a principal holomorphic fiber bundle, where $T$ is a maximal torus in $G$. 

Finally,
by \cite{AI} the fundamental form $F$ is $dd^c$-harmonic, so it is SKT and Gauduchon. The last assertion in the proposition follows from \cite[Corollary 2.3]{LU-cr} because $F$ is left-invariant.
\end{proof}

Note that $E_r(X)= E_{\infty}(X)$ for every $r\geq 3$, so the FSS of $X$ degenerates at the third page (see \cite{Pittie-indian,Pittie-bull} for details).


\vskip.2cm

\appendix

\section{The nilmanifolds $Y^4_{\alpha}$ have pairwise non-isomorphic $\C$-minimal models}


\noindent In this section we complete the proof of Theorem~\ref{cor-infinite-bal-FSS} by showing that the Lie algebras $\mathfrak \frg_{11}^{\alpha, 1}$, $\alpha\in[0,+\infty)$, underlying $Y^4_{\alpha}$ are pairwise non-isomorphic over $\C$. 
Recall that the structure equations of $\mathfrak \frg_{11}^{\alpha, 1}$ are
\begin{equation}\label{ec-reales-FII-caseiii}
\begin{split}
dv^1 &= dv^2 =dv^3= 0,\quad
dv^4 = v^{13},\quad
dv^5 = v^{23},\quad
dv^6 = v^{14}+v^{25}-v^{35},\\[4pt]
dv^7 &= \alpha\,v^{12}+v^{15}+v^{24}+v^{34},\quad
dv^8 = v^{16}-2\,v^{25}+v^{27}-v^{35}-v^{45}.
\end{split}
\end{equation}

We will prove that if $f\colon \mathfrak \frg_{11}^{\alpha, 1} \longrightarrow \mathfrak \frg_{11}^{\alpha', 1}$ is an isomorphism of Lie algebras, then $\alpha=\alpha'$. 
Note that the dual map $f^* \colon (\mathfrak \frg_{11}^{\alpha', 1})^* \longrightarrow (\mathfrak \frg_{11}^{\alpha, 1})^*$ extends to a map
$F \colon \bigwedge^*(\mathfrak \frg_{11}^{\alpha', 1})^* \longrightarrow \bigwedge^*(\mathfrak \frg_{11}^{\alpha, 1})^*$ that commutes with the differentials, i.e. $F\circ d=d\circ F$.

Let $\{v^k\}_{k=1}^8$ (resp. $\{v'^{\,k}\}_{k=1}^8$) be a basis for~$(\mathfrak \frg_{11}^{\alpha, 1})^*$
(resp. $(\mathfrak \frg_{11}^{\alpha', 1})^*$) satisfying the equations~\eqref{ec-reales-FII-caseiii}
for $\alpha$ (resp. $\alpha'$).
In terms of these bases, $F$ is determined by
$$
F(v'^{\,k}) =\sum_{j=1}^8 \lambda_j^k\, v^j, \quad k=1,\ldots,8,
$$
where the matrix $\Lambda=(\lambda^k_j)_{k,j=1,\ldots,8}$ belongs to ${\rm GL}(8,\C)$, 
and the condition $F\circ d=d\circ F$ reads as
\begin{equation}\label{condiciones}
F(dv^k) - d(F(v'^{\,k}))=0, \ \text{ for each }1\leq k\leq 8.
\end{equation}

In \cite{LUV2} the isomorphism problem for this family of algebras was studied in the case of the field of real numbers. 
We notice that \cite[Lemma 3.4]{LUV2} is still valid over $\C$, as it is the first part of the proof of \cite[Lemma 3.5]{LUV2}. Thus, one arrives at a complex matrix $\Lambda=(\lambda^k_j)_{k,j}$ of the form
\begin{equation}\label{matriz-reducida}
\Lambda=
\begin{pmatrix}
\lambda^1_1 & \lambda^1_2 & 0 & 0 & 0 & 0 & 0 & 0 \\
\lambda^2_1 & \lambda^2_2 & 0 & 0 & 0 & 0 & 0 & 0 \\
0 & 0 & \lambda^3_3 & 0 & 0 & 0 & 0 & 0 \\
\lambda^4_1 & \lambda^4_2 & \lambda^4_3 & \lambda^4_4 & \lambda^4_5 & 0 & 0 & 0 \\
\lambda^5_1 & \lambda^5_2 & \lambda^5_3 & \lambda^5_4 & \lambda^5_5 & 0 & 0 & 0 \\
\lambda^6_1 & \lambda^6_2 & \lambda^6_3 & \lambda^6_4 & \lambda^6_5 & \lambda^6_6 & \lambda^6_7 & 0 \\
\lambda^7_1 & \lambda^7_2 & \lambda^7_3 & \lambda^7_4 & \lambda^7_5 & \lambda^7_6 & \lambda^7_7 & 0 \\
\lambda^8_1 & \lambda^8_2 & \lambda^8_3 & \lambda^8_4 & \lambda^8_5 & \lambda^8_6 & \lambda^8_7 & \lambda^8_8 
\end{pmatrix},
\end{equation}
where $\lambda^1_1 \lambda^2_2-\lambda^1_2\lambda^2_1\not=0$, $\lambda^3_3,\lambda^8_8\not=0$, together with 
$$
\begin{array}{lllll}
&\lambda^4_4 = \lambda^{1}_1\,\lambda^3_3, \quad &\lambda^4_5 = \lambda^{1}_2\,\lambda^3_3, \quad
&\lambda^5_4 = \lambda^{2}_1\,\lambda^3_3, \quad &\lambda^5_5 = \lambda^{2}_2\,\lambda^3_3, \\
&\lambda^6_6=\lambda^2_2\,(\lambda^3_3)^2, \quad &\lambda^6_7=-\lambda^2_1\,(\lambda^3_3)^2, \quad
&\lambda^7_6=-\lambda^1_2\,(\lambda^3_3)^2, \quad &\lambda^7_7=\lambda^1_1\,(\lambda^3_3)^2,
\end{array} 
$$
and
\begin{equation}\label{ecu-principal}
(\lambda^1_2)^2 + (\lambda^2_2)^2 - \lambda^2_2\,\lambda^3_3 = 0, \qquad
\lambda^1_2\,(\lambda^3_3 + 2\,\lambda^2_2) = 0.
\end{equation}
We distinguish two cases depending on the vanishing of the complex coefficient $\lambda^1_2$.

Let us first suppose that $\lambda^1_2=0$. 
Note that in this case the second part of the proof of \cite[Lemma 3.5]{LUV2} and \cite[Lemma 3.6]{LUV2} are still valid for $\C$. Hence, for $\alpha\not=\alpha'$, any $F$ with $\lambda^1_2=0$  
is not an isomorphism over $\C$. 

Consequently, we will now assume that $\lambda^1_2\neq 0$. Then, the second expression in \eqref{ecu-principal} gives
$\lambda^3_3=-2\,\lambda^2_2$, and replacing it in the first one, we get
\begin{equation}\label{ecu-p1}
(\lambda^1_2)^2 + 3\, (\lambda^2_2)^2 = 0.
\end{equation}
On the other hand, the coefficients of $v^{14}$ and $v^{15}$ in the condition \eqref{condiciones} for $k=7$ are, respectively, $2\,\lambda^1_1\,\lambda^2_1 +\lambda^1_2\,\lambda^3_3$ and $\lambda^1_2\,\lambda^2_1 +\lambda^1_1(\lambda^2_2-\lambda^3_3)$, so using again  $\lambda^3_3=-2\,\lambda^2_2$ we get
\begin{equation}\label{ecu-p2}
\lambda^1_1\,\lambda^2_1 -\lambda^1_2\,\lambda^2_2 =0,\quad \quad
\lambda^1_2\,\lambda^2_1 +3\,\lambda^1_1\,\lambda^2_2 =0.
\end{equation}

The equations \eqref{ecu-p1}-\eqref{ecu-p2} can be solved explicitly in terms of $\lambda^1_2$. Notice that $(\lambda^1_1)^2=(\lambda^2_2)^2$. There are four solutions:
\begin{equation}\label{sol-p1-p2}
\lambda^1_1= \vartheta\xi \frac{i}{\sqrt{3}} \lambda^1_2, \quad
\lambda^2_1= \xi\,\lambda^1_2, \quad 
\lambda^2_2= \vartheta \frac{i}{\sqrt{3}} \lambda^1_2,\quad
\lambda^3_3= -2\,\vartheta \frac{i}{\sqrt{3}} \lambda^1_2, \quad\quad \vartheta,\xi\in\{\pm 1\}.
\end{equation}

We will use these solutions below, after taking into account first other equations coming from the conditions \eqref{condiciones} for $k=6,7,8$ (note that \eqref{condiciones} is already fullfilled for $1\leq k\leq 5$).

The coefficients of $v^{13}$ and $v^{23}$ in the condition \eqref{condiciones} for $k=6,7$ give the following equations:
\begin{equation*}
\begin{array}{lll}
&\lambda^6_4=\lambda^1_1 \lambda^4_3 + \lambda^3_3 \lambda^5_1 +\lambda^2_1 \lambda^5_3 , \quad\quad
&\lambda^6_5=\lambda^1_2 \lambda^4_3 + \lambda^3_3 \lambda^5_2 +\lambda^2_2 \lambda^5_3 , \\[2pt] 
&\lambda^7_4= -\lambda^3_3 \lambda^4_1 + \lambda^2_1 \lambda^4_3 +\lambda^1_1 \lambda^5_3 , \quad\quad
&\lambda^7_5= -\lambda^3_3 \lambda^4_2 + \lambda^2_2 \lambda^4_3 +\lambda^1_2 \lambda^5_3 .
\end{array}
\end{equation*}
Using this, from the coefficients of $v^{13}$, $v^{14}$, $v^{16}$, $v^{23}$ and $v^{34}$ in the condition \eqref{condiciones} for $k=8$ we have the following equalities:
\begin{equation*}
\begin{split}
&\lambda^8_4= (\lambda^3_3 + \lambda^4_3) \lambda^5_1 -(2\,\lambda^2_1 +\lambda^4_1) \lambda^5_3 + \lambda^1_1 \lambda^6_3 +\lambda^2_1 \lambda^7_3 , \\[2pt]
&\lambda^8_5=(\lambda^3_3 + \lambda^4_3) \lambda^5_2 -(2\,\lambda^2_2 +\lambda^4_2) \lambda^5_3 + \lambda^1_2 \lambda^6_3 +\lambda^2_2 \lambda^7_3 , \\[2pt]
&\lambda^8_6=-2(\lambda^2_1)^2\lambda^3_3 -2\,\lambda^2_1 \lambda^3_3 \lambda^4_1 + \left( (\lambda^1_1)^2 +(\lambda^2_1)^2 \right)\lambda^4_3 +2\,\lambda^1_1 \lambda^3_3 \lambda^5_1 +2\,\lambda^1_1 \lambda^2_1 \lambda^5_3 , \\[2pt]
&\lambda^8_7=-\lambda^2_1(\lambda^3_3)^2 -\lambda^2_1 \lambda^3_3 \lambda^4_3 +\lambda^1_1 \lambda^3_3 \lambda^5_3 , \\[2pt]
&\lambda^8_8=(\lambda^1_1\lambda^2_2-\lambda^1_2\lambda^2_1) (\lambda^3_3)^2.
\end{split}
\end{equation*}
Now, 
considering these equalities together with the solutions \eqref{sol-p1-p2}, the conditions \eqref{condiciones} for all $1\leq k\leq 8$ are equivalent to the following system: 
\begin{equation}\label{sistema}
\begin{split}
& 
\lambda^4_1 -  \sqrt{3}\,i\,  \vartheta\,  \xi\,  \lambda^4_2 + 
     \sqrt{3}\,i\,  \vartheta\,  \lambda^5_1 - \xi\,  \lambda^5_2 = -4 \xi\,  \lambda^1_2,\\[4pt]
&
3 \lambda^4_1 -  \sqrt{3}\,i\,  \vartheta\, \xi\,  \lambda^4_2 + \sqrt{3}\,i\,  \vartheta\,  \lambda^5_1 - 3 \xi\,  \lambda^5_2 = -4 \xi\, \alpha\, (\lambda^1_2)^2,\\[4pt]
&
    3\sqrt{3}  \,i\,  \vartheta\,  \xi\,  \lambda^4_1 + 3 \lambda^4_2 + 3 \xi\,  \lambda^5_1 + 
    3 \sqrt{3}  \,i\,  \vartheta\,  \lambda^5_2 = -4 \lambda^1_2\,( \sqrt{3} \,i\,  \vartheta -2 \xi\,  \lambda^1_2) ,\\[4pt]
    &
\sqrt{3}\,i\,  \vartheta\,  \lambda^4_1 - 3 \xi\,  \lambda^4_2 + 3 \lambda^5_1 - 
    \sqrt{3}\,i\,  \vartheta\,  \xi\,  \lambda^5_2 = -4\,\xi\, \lambda^1_2\,( \alpha' -  \frac{i}{\sqrt{3}}\,  \vartheta\,  \alpha\, \lambda^1_2) ,\\[4pt]
  &
    3  \sqrt{3}\,i\,  \vartheta\,  \xi\,  \lambda^4_1 + 3 \xi\,  \lambda^5_1 
    = -4 \lambda^1_2\,( \sqrt{3}\,i\,  \vartheta + \xi\,  \lambda^1_2) ,\\[4pt]
 &
 4 \xi\,  \alpha\, (\lambda^1_2)^3 + 
    2  i\, \sqrt{3} \vartheta\,  \xi\,  \alpha\, (\lambda^1_2)^2 \lambda^4_3 - 
    2  i\, \sqrt{3} \vartheta\,  \lambda^1_2 \lambda^5_1 - 3 \lambda^4_2 \lambda^5_1 + 
    6 \xi\,  \lambda^1_2 \lambda^5_2  \\[4pt]
&    + 3 \lambda^4_1 \lambda^5_2 + 
    2 \xi\,  \alpha\, (\lambda^1_2)^2 \lambda^5_3 + 3 \lambda^1_2 \lambda^6_1 - 
     i\, \sqrt{3} \vartheta\,  \xi\,  \lambda^1_2 \lambda^6_2 +  i\, \sqrt{3} \vartheta\,  \lambda^1_2 \lambda^7_1 - 
    3 \xi\,  \lambda^1_2 \lambda^7_2 = 0.
\end{split}
\end{equation}
     
We focus our attention on the first five equations in~\eqref{sistema} and consider them as a system of five linear equations in $\lambda^4_1$, $\lambda^4_2$, $\lambda^5_1$ and $\lambda^5_2$. 
Applying Gaussian elimination, one reaches the matrix
 $$
\left(
\begin{array}{cccc|c}
1 & - \sqrt{3}\,i\,  \vartheta\,  \xi &  \sqrt{3}\,i\,  \vartheta & -\xi  
	& -4 \xi\,  \lambda^1_2   \\[3pt]
0 & 1 & -\xi & 0 
	& -\frac{2\,i}{\sqrt 3}\,\vartheta\,\lambda^1_2\,(3-\alpha\,\lambda^1_2) \\[3pt]
0 & 0 & 1 & \sqrt 3\,i\,\vartheta\,\xi 
	& \frac{2}{3}\,\vartheta\,\xi\,\lambda^1_2
	\big(
	2\,\vartheta\,\xi\,\lambda^1_2 - \sqrt 3\,i\,(1-\alpha\,\lambda^1_2)
	\big)\\[3pt]
0 & 0 & 0 & 0
	& -\frac{4}{\sqrt 3}\,\vartheta\,\xi\,\lambda^1_2\,
	\big(
	\sqrt 3\,\vartheta\,\alpha' + 2\,i\,(3-2\,\alpha\,\lambda^1_2)
	\big) \\[3pt]
0 & 0 & 0 & 0
	& -4\,\vartheta\,\lambda^1_2\,
	\big(
	2\,\vartheta\,\xi\,\lambda^1_2 + \sqrt 3\,i\,(2-\alpha\,\lambda^1_2)
	\big)
\end{array}\right).
$$
By hypothesis $\lambda^1_2\neq 0$, so the system can be solved if and only if
$$\begin{cases}
\sqrt 3\,\vartheta\,\alpha' + 2\,i\,(3-2\,\alpha\,\lambda^1_2)=0,\\
2\,\vartheta\,\xi\,\lambda^1_2 + \sqrt 3\,i\,(2-\alpha\,\lambda^1_2)=0.
\end{cases}$$
If $\alpha=0$, then $\alpha'$ would be an imaginary number, which is not possible. Otherwise, $\lambda^1_2$ can be solved from the first equation and substituting its value in the second one, we get
$$2\,\sqrt 3\,i\,\big(\alpha-\xi\,\alpha'\big)-3\,\vartheta\,\big(\alpha\,\alpha'-4\,\xi\big)=0.$$
Since $\alpha,\alpha'\in(0,\infty)$, the imaginary part of this equation gives $\alpha'=\alpha$.

Hence, we have proved that given $\alpha,\alpha'\in[0,+\infty)$, the algebras $\mathfrak \frg_{11}^{\alpha, 1}$ and $\mathfrak \frg_{11}^{\alpha', 1}$ are isomorphic over $\C$ if and only if $\alpha=\alpha'$.

\section*{Acknowledgments}
\noindent 
We wish to thank Mario Garc\' ia-Fern\'andez for very useful discussions about Bismut flat spaces that led us to the result in Proposition~\ref{counterex}. 
We also thank Dan Popovici and Jonas Stelzig for many helpful discussions on the Fr\"olicher sequence of complex manifolds and related issues.
This work has been partially supported by grant PID2020-115652GB-I00, funded by MCIN/AEI/10.13039/501100011033,
and by grant E22-17R ``Algebra y Geometr\'{\i}a'' (Gobierno de Arag\'on/FEDER).

\smallskip

\end{document}